\newtheorem{theorem}{Theorem}
\newtheorem{proposition}{Proposition}
\newtheorem{lemma}{Lemma}
\DeclareMathOperator{\Sym}{Sym}
\DeclareMathOperator{\Aut}{Aut}
\DeclareMathOperator{\AGL}{A\Gamma L}
\DeclareMathOperator{\GamL}{\Gamma L}
\DeclareMathOperator{\GamO}{\Gamma O}
\DeclareMathOperator{\GL}{GL}
\DeclareMathOperator{\SL}{SL}
\DeclareMathOperator{\SU}{SU}
\DeclareMathOperator{\Sz}{Sz}
\DeclareMathOperator{\GF}{GF}
\DeclareMathOperator{\VO}{VO}
\DeclareMathOperator{\VSz}{VSz}
\DeclareMathOperator{\VD}{VD}
\DeclareMathOperator{\rad}{rad}
\title{On 2-closures of rank~3 groups}
\author{S.V.~Skresanov\thanks{
The work was partially supported by the RFBR grant No.~18-01-00752, and by Mathematical
Center in Akademgorodok under agreement No.~075-15-2019-1613 with the Ministry of Science
and Higher Education of the Russian Federation.
}}
\date{}
\begin{document}

\maketitle
\begin{abstract}
	A permutation group \( G \) on \( \Omega \) is called a \textit{rank}~3 group
	if it has precisely three orbits in its induced action on \( \Omega \times \Omega \).
	The largest permutation group on \( \Omega \) having the same orbits as \( G \) on
	\( \Omega \times \Omega \) is called the 2-\textit{closure} of \( G \).
	A description of 2-closures of rank~3 groups is given. As a special case, it is proved
	that 2-closure of a primitive one-dimensional affine rank~3 permutation group of sufficiently large degree
	is also affine and one-dimensional.
\end{abstract}

\section{Introduction}

Let \( G \) be a permutation group on a finite set \( \Omega \). Recall that the
\textit{rank} of \( G \) is the number of orbits in the induced action of \( G \)
on~\( \Omega \times \Omega \); these orbits are called 2-\textit{orbits}.
If a rank~3 group has even order, then its non-diagonal 2-orbit induces a strongly regular graph
on \( \Omega \), which is called a \textit{rank~\( 3 \) graph}. It is readily seen that a
rank~3 group acts on the corresponding rank~3 graph as an automorphism group.
Notice that an arc-transitive strongly regular graph need not be a rank~3 graph,
since its automorphism group might be intransitive on non-arcs.

Related to this is the notion of a 2-\textit{closure} of a permutation group.
The group \( G^{(2)} \) is the 2-closure of a permutation group \( G \),
if \( G^{(2)} \) is the largest permutation group having the same 2-orbits as \( G \).
Clearly \( G \leq G^{(2)} \), the 2-closure of \( G^{(2)} \) is again \( G^{(2)} \),
and \( G^{(2)} \) has the same rank as \( G \).
Note also that given a rank~3 graph \( \Gamma \) corresponding to the rank~3 group \( G \),
we have \( \Aut(\Gamma) = G^{(2)} \).

% The main result in the theory of rank~3 groups is their full classification.
The rank~3 groups are completely classified. A primitive rank~3 permutation group either
stabilizes a nontrivial product decomposition, is almost simple or is an affine group.
The rank~3 groups stabilizing a nontrivial product decomposition are given by the classification
of the 2-transitive almost simple groups, see Theorem~4.1~(ii)(a) and \S 5 in~\cite{cameronFinsimp}.
Almost simple rank~3 groups were determined in~\cite{bannai} when the socle is an alternating group,
in~\cite{kantor} when the socle is a classical group
and in~\cite{liebeckSaxl} when the socle is an exceptional or sporadic group. The classification of affine
rank~3 groups was completed in~\cite{liebeckAffine}.

\iffalse
The rank~3 groups are completely classified. A primitive rank~3 permutation group either
stabilizes a nontrivial product decomposition, is almost simple or is an affine group.
The rank~3 groups \( G \) stabilizing a nontrivial product decomposition are given by the classification
of the 2-transitive almost simple groups (see \S 5 in~\cite{cameronFinsimp}), indeed, in that case
one has \( T \times T \unlhd G \) and \( G \) lies in the primitive wreath product \( T_0 \uparrow \Sym(2) \),
where \( T_0 \) is a 2-transitive almost simple group with socle \( T \).
Almost simple rank~3 groups were determined
in~\cite{bannai} when the socle is an alternating group, in~\cite{kantor} when the socle is a classical group
and in~\cite{liebeckSaxl} when the socle is an exceptional or sporadic group. The classification of affine
rank~3 groups was completed in~\cite{liebeckAffine}.
\fi

\iffalse
The main result in the theory of rank~3 groups is their full classification. A rank~3 permutation
group is either imprimitive, stabilizes a nontrivial product decomposition (and hence is given by the
classification of the 2-transitive almost simple groups, see \S 5 in~\cite{cameronFinsimp}),
almost simple (see~\cite{bannai} for the alternating socle, \cite{kantor} for the classical socle,
and \cite{liebeckSaxl} for the exceptional and sporadic socles) or is an affine group (see~\cite{liebeckAffine}).
\fi

In order to describe the 2-closures of rank~3 permutation groups (or, equivalently, the automorphism groups of rank~3 graphs),
it is essential to know which rank~3 permutation groups give rise to isomorphic graphs.
Despite all the rank~3 groups being known, it is not a trivial task (considerable progress in this direction
was obtained in~\cite{srgw}). In this work we give a detailed description of 2-closures of rank~3 groups.

\begin{theorem}\label{class}
	Let \( G \) be a rank~\( 3 \) permutation group on a set \( \Omega \)
	and suppose that \( |\Omega| \) is sufficiently large.
	Then exactly one of the following is true.
	\begin{enumerate}[(i)]
		\item \( G \) is imprimitive, i.e.\ it preserves a nontrivial decomposition
			\( \Omega = \Delta \times X \).
			Then \( G^{(2)} = \Sym(\Delta) \wr \Sym(X) \).
		\item \( G \) is primitive and preserves a product decomposition
			\( \Omega = \Delta^2 \). Then \( G^{(2)} = \Sym(\Delta) \uparrow \Sym(2) \).
		\item \( G \) is primitive almost simple with socle \( L \), i.e.\ \( L \unlhd G \leq \Aut(L) \).
			Then \( G^{(2)} = N_{\Sym(\Omega)}(L) \), and \( G^{(2)} \) is almost simple with socle \( L \).
		\item \( G \) is a primitive affine group, i.e.\ \( G \leq \AGL_a(q) \) for some \( a \geq 1 \)
			and a prime power~\( q \), moreover, \( G \) does not stabilize a product decomposition.
			Set \( F = \GF(q) \). Then \( G^{(2)} \) is also an affine group and exactly one of the following holds.
			\begin{enumerate}
				\item \( G \leq \AGL_1(q) \). Then \( G^{(2)} \leq \AGL_1(q) \).
				\item \( G \leq \AGL_{2m}(q) \) preserves the bilinear forms graph \( H_q(2, m) \), \( m \geq 3 \) and
					\[ G^{(2)} = F^{2m} \rtimes ((\GL_2(q) \circ \GL_m(q)) \rtimes \Aut(F)). \]
				\item \( G \leq \AGL_{2m}(q) \) preserves the affine polar graph \( \VO_{2m}^{\epsilon}(q) \),
					\( m \geq 2 \), \( \epsilon = \pm \). Then
					\[ G^{(2)} = F^{2m} \rtimes \operatorname{\Gamma O}^{\epsilon}_{2m}(q). \]
				\item \( G \leq \AGL_{10}(q) \) preserves the alternating forms graph \( A(5, q) \). Then
					\[ G^{(2)} = F^{10} \rtimes ((\GamL_5(q) / \{ \pm 1 \}) \times (F^{\times} / (F^{\times})^2)). \]
				\item \( G \leq \AGL_{16}(q) \) preserves the affine half spin graph \( \VD_{5,5}(q) \).
					Then \( G^{(2)} \leq \AGL_{16}(q) \) and we have
					\[ G^{(2)} = F^{16} \rtimes ((F^{\times} \circ \operatorname{Inndiag}(D_5(q))) \rtimes \Aut(F)). \]
				\item \( G \leq \AGL_4(q) \) preserves the Suzuki-Tits ovoid graph \( \VSz(q) \),
					\( q = 2^{2e+1} \), \( e \geq 1 \). Then
					\[ G^{(2)} = F^4 \rtimes ((F^{\times} \times \Sz(q)) \rtimes \Aut(F)). \]
			\end{enumerate}
	\end{enumerate}
\end{theorem}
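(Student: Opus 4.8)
The starting point is the observation recorded in the introduction that $G^{(2)} = \Aut(\Gamma)$, where $\Gamma$ is the rank~3 graph attached to $G$ (and, when $|G|$ is odd, $G^{(2)}$ is the automorphism group of the associated rank~3 coherent configuration formed by the two paired 2-orbits). Thus the whole theorem is a determination of automorphism groups of rank~3 graphs, and I would organise it along the classification recalled above: imprimitive (i), primitive of product type (ii), primitive almost simple (iii), and primitive affine (iv). In every branch $G^{(2)}$ is again a rank~3 group of the same degree, so the classification applies to $G^{(2)}$ as well, and the function of the hypothesis that $|\Omega|$ be large is to suppress the finitely many sporadic coincidences in which a graph from one branch acquires the symmetries of another. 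Branches (i) and (ii) are then settled by direct graph theory. An imprimitive rank~3 graph is a disjoint union $m\cdot K_n$ of complete graphs or its complete multipartite complement, and in both cases $\Aut(\Gamma) = \Sym(\Delta)\wr\Sym(X)$ with $|\Delta| = n$, $|X| = m$, taking complements leaving the group unchanged. In the product case $\Gamma$ is the Hamming (rook's) graph $H(2,|\Delta|)$, the line graph of $K_{|\Delta|,|\Delta|}$, whose automorphism group is that of $K_{|\Delta|,|\Delta|}$ by Whitney reconstruction, namely the wreath product in product action $\Sym(\Delta)\uparrow\Sym(2)$.

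For the almost simple branch (iii) I would apply the classification to $G^{(2)}$ itself. Since $G^{(2)}$ is primitive of rank~3 and contains $G$, for large degree it can be neither of product type nor affine, for both possess a transitive regular subgroup (abelian in the affine case) that $G^{(2)}$, hence its subgroup $G$, would have to respect, against the almost simple structure of $G$. So $G^{(2)}$ is almost simple with some socle $M$. The decisive step is $M = L$: as $M \unlhd G^{(2)}\ge G\ge L$ we have $L\cap M\unlhd L$, so by simplicity either $L\le M$ or $L\cap M = 1$, the latter embedding the nonabelian simple group $L$ into $G^{(2)}/M\le\operatorname{Out}(M)$, which is solvable by Schreier's conjecture, a contradiction; hence $L\le M$, and that these two simple groups coincide for large degree is exactly the rigidity controlled by the analysis of which distinct rank~3 groups share a graph in~\cite{srgw}. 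Once $M=L$ we get $G^{(2)}\le N_{\Sym(\Omega)}(L)$, while conversely $N_{\Sym(\Omega)}(L)$ permutes the $L$-orbitals, which have distinct valencies for large degree and so are preserved individually, whence $N_{\Sym(\Omega)}(L)\le\Aut(\Gamma)=G^{(2)}$.

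The affine branch (iv) carries the real weight. Writing $G = V\rtimes G_0$ with $V = \GF(q)^a$ elementary abelian and $G_0\le\GL_a(q)$ having exactly two orbits on $V\setminus\{0\}$, the graph $\Gamma$ is the Cayley graph $\mathrm{Cay}(V,S)$ on one such orbit $S$. The principal obstacle, and the step I expect to be hardest, is to show that $G^{(2)}$ is again affine, i.e.\ that the translation group $V$ is normal in $\Aut(\Gamma)$, equivalently that every automorphism fixing the zero vector is semilinear. I would derive this from the primitivity and rank~3 property of $G^{(2)}$ together with Liebeck's classification~\cite{liebeckAffine}, using the large-degree hypothesis to exclude the exceptional isomorphisms under which $\Gamma$ would coincide with a graph of product or almost simple type and thereby gain non-affine symmetries. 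The one-dimensional subcase (a) I would handle separately and by arithmetic means, as signalled in the abstract: here $G_0\le\GamL_1(q)$, and one must rule out that the cyclotomic strongly regular graph $\Gamma$ admits a higher-dimensional or product realisation with a larger automorphism group, which is again where large degree is essential.

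Granting $V\unlhd G^{(2)}$, we obtain $G^{(2)} = V\rtimes H$ with $H\le\GamL_a(q)$ the stabiliser of $S$ among semilinear maps, and it remains to identify $H$ in each family. For subcases (b)--(f) this is a geometric computation: each of the bilinear forms graph $H_q(2,m)$, the affine polar graph $\VO^{\epsilon}_{2m}(q)$, the alternating forms graph $A(5,q)$, the affine half spin graph $\VD_{5,5}(q)$ and the Suzuki--Tits ovoid graph $\VSz(q)$ carries an intrinsic geometry---rank one matrices, singular points of a quadratic form, the spin geometry of $D_5$, the Suzuki ovoid---which is recoverable from adjacency alone. Reconstructing this geometry from $\Gamma$ pins $H$ down to the stated semidirect products, for which I would invoke, or reprove along these lines, the known descriptions of the full automorphism groups of these classical graphs, yielding the groups $(\GL_2(q)\circ\GL_m(q))\rtimes\Aut(F)$, $\GamO^{\epsilon}_{2m}(q)$, $(\GamL_5(q)/\{\pm1\})\times(F^{\times}/(F^{\times})^2)$, $(F^{\times}\circ\operatorname{Inndiag}(D_5(q)))\rtimes\Aut(F)$ and $(F^{\times}\times\Sz(q))\rtimes\Aut(F)$ respectively. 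The exceptional small parameters at which these graphs acquire extra automorphisms or collapse onto one another are precisely those excluded by the assumption that $|\Omega|$ be sufficiently large.
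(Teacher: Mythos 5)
Your skeleton---reduce to automorphism groups of rank~3 graphs, split along the classification, and quote known automorphism groups of the classical families---is also the paper's skeleton, and your treatment of (i), (ii) and the arithmetic plan for (iv)(a) essentially match Propositions~\ref{imprimCase}, \ref{prodCase} and Lemmas~\ref{a1subdgrdescr}--\ref{a1subdgr}. But the two load-bearing steps are missing. First, the socle-preservation reduction. Your argument that \( G^{(2)} \) cannot be affine or of product type because \( G \) ``would have to respect'' a transitive regular subgroup of \( G^{(2)} \) is not valid: containment of primitive groups imposes no such constraint, and primitive almost simple groups do embed into primitive affine groups of the same degree (e.g.\ the 2-transitive \( \PSL_2(7) < \operatorname{AGL}_3(2) \) in degree \( 8 \)). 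Likewise, once you have \( L \leq M \), the equality \( L = M \) is precisely the hard point, and ``rigidity controlled by the analysis in \cite{srgw}'' is an appeal to the conclusion rather than a proof. The paper funnels both case (iii) and the assertion in (iv) that \( G^{(2)} \) is affine through Lemma~\ref{praeger2}, i.e.\ through the CFSG-based theorems of Praeger--Saxl \cite{praegerClosure} and Liebeck--Praeger--Saxl \cite{liebeck2Closure}; nothing in your sketch replaces them. Two further gaps in (iii): ruling out product decompositions for almost simple rank~3 groups is itself a theorem (Lemma~\ref{asprod}, via \cite{praegerDecomp} and the rank~3 classification); and your claim that the \( L \)-orbitals have distinct valencies, hence \( N_{\Sym(\Omega)}(L) \leq \Aut(\Gamma) \), is unjustified---the socle can have rank larger than 3 (e.g.\ \( \PSL_2(8) \) has rank 5 on 36 points while \( \operatorname{P\Gamma L}_2(8) \) has rank 3), and paired orbitals always share a valency. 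The paper instead proves \( N_{\Sym(\Omega)}(L) \) has rank~3 by excluding 2-transitivity via \cite{cameronFinsimp}.

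Second, your premise that ``sufficiently large degree'' suppresses all coincidences between families is false, and with it both your proof that the cases are mutually exclusive and your treatment of (e), (f). Three coincidences persist for arbitrarily large degree: \( H_q(2,2) \cong \VO_4^+(q) \) for every \( q \) (which is exactly why case (b) carries the restriction \( m \geq 3 \)); \( \VSz(q) \) and \( \VO_4^-(q) \) have identical strongly regular parameters for every \( q = 2^{2e+1} \); and Paley and Peisert graphs share parameters in infinitely many degrees. No degree hypothesis separates these; the paper needs the subdegree analysis of Proposition~\ref{subdinter} together with the group-theoretic Lemma~\ref{szvo} (\( \Sz(q) \) does not embed into \( \GamO_4^-(q) \), hence \( \VSz(q) \not\cong \VO_4^-(q) \)). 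Finally, for subcases (e) and (f) there are no off-the-shelf automorphism group descriptions to ``invoke'': the paper must prove Proposition~\ref{vd55aut} by a delicate analysis inside \( E_6(q) \) (diagonal and field automorphisms of \( D_5(q) \) extend to the half spin graph, but graph automorphisms do not, since they interchange the two conjugacy classes of \( A_4 \)-parabolics), and Proposition~\ref{szaut} by an order count that itself rests on Lemma~\ref{szvo}. Your plan to ``reconstruct the intrinsic geometry from adjacency'' is precisely the unexecuted mathematical content in these two cases, so as it stands the proposal establishes neither the group identifications in (e), (f) nor the exclusivity of the case division.
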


Up arrow symbol in~(ii) denotes the primitive wreath product (see Section~\ref{nonaffsec}),
notation for graphs in the affine case is explained in Section~\ref{affcase}. We also remark that the value
of \( a \) in (iv) of the theorem is not necessarily minimal subject to \( G \leq \AGL_a(q) \), since it
is not completely defined by the corresponding rank~3 graph and may depend on the group-theoretical structure of \( G \).
Minimal values of \( a \) can be found in Table~\ref{subtab} of Appendix.

It should be noted that the phrase ``sufficiently large'' in this paper means ``larger than some
absolute constant''. Quite often a statement is true but for a finite number of cases (see, for instance,
Lemma~\ref{praeger2}), and since we are generally not interested in these exceptions, we require degrees of
our groups to be ``sufficiently large''.

The proof of Theorem~\ref{class} can be divided into three parts. First we reduce the study to the case when \( G^{(2)} \)
has the same socle as \( G \), and deal with cases~(i)--(iii) (Proposition~\ref{nonabsoc}).
In the affine case~(iv) we apply the classification of affine rank~3 groups~\cite{liebeckAffine},
and compare subdegrees of groups from various classes (Lemma~\ref{a1subdgr} and Proposition~\ref{subdinter});
that allows us to deal with case (a). Finally, we invoke known results on automorphisms of
some families of strongly regular graphs to cover cases (b)--(d), while cases (e) and (f) are treated separately.

Part (iv), (a) of Theorem~\ref{class} can be formulated as a standalone result
that may be of the independent interest.

\begin{theorem}\label{a1inv}
	Let \( G \) be a primitive affine permutation group of rank~3
	and suppose that \( G \leq \AGL_1(q) \) for some prime power \( q \).
	Then either \( G^{(2)} \) lies in \( \AGL_1(q) \), or degree and
	the smallest subdegree of \( G \) are listed in Table~\ref{a1exctab}.
\end{theorem}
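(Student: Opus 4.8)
The plan is to pin down $G^{(2)}$ via the classification of affine rank~3 groups and then to separate the one-dimensional outcome from all the others using the smallest subdegree. A one-dimensional affine rank~3 group is primitive, and its socle is the translation group $V \cong \GF(p)^f$, where $q = p^f$. Hence the reduction furnished by Proposition~\ref{nonabsoc} applies and shows that $G^{(2)}$ is again a primitive affine rank~3 group with the same socle~$V$, so that $G^{(2)} \le \AGL_f(p)$. If $G^{(2)} \le \AGL_1(q)$ we are done, so I would assume the contrary and apply the classification of~\cite{liebeckAffine} to $G^{(2)}$. This places $G^{(2)}$ in one of the remaining classes: the imprimitive or tensor linear classes, one of the geometric families attached to the graphs $H_{q_0}(2,m)$, $\VO^{\epsilon}_{2m}(q_0)$, $A(5,q_0)$, $\VD_{5,5}(q_0)$ and $\VSz(q_0)$, or one of the extraspecial-normalizer and exceptional near-simple classes.

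The link between the two groups is that they share their $2$-orbits, hence the same subdegrees $k \le l$ with $k + l = q-1$. Since $G \le \AGL_1(q)$, the point stabiliser $G_0$ lies in $\GamL_1(q) = \GF(q)^{\times} \rtimes \Aut(\GF(q))$, and each of its two orbits on $V \setminus \{0\}$ is a union of cosets of the cyclic subgroup $H = G_0 \cap \GF(q)^{\times}$, which acts semiregularly. Consequently $|H|$ divides both $k$ and $l$; moreover $G_0/H$ embeds into $\Aut(\GF(q)) \cong C_f$, so the index $n = [\GF(q)^{\times} : H]$ satisfies $n \mid q-1$ and $n \le 2f$. This is the content of Lemma~\ref{a1subdgr}, and it yields $k = a\,(q-1)/n$ for some integer $a$ with $1 \le a \le n/2$; in particular $k$ is divisible by $(q-1)/n$ with $n$ small relative to $q$.

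I would then compare, class by class, the explicit smallest subdegree of each non-one-dimensional family against this representation, as in Proposition~\ref{subdinter}. Each geometric subdegree is a polynomial in the subfield order $q_0$, with $q$ a fixed power of $q_0$, whose leading term is a rational multiple $r\,(q-1)$ and whose subleading terms are of strictly smaller order (of size $O(\sqrt{q})$ for these families). Because the product of the admissible denominators, $n \le 2f = 2\log_p q$ and the denominator of $r$, stays below $\sqrt{q}$ in the relevant range, equating $a\,(q-1)/n$ with the geometric value forces $a/n = r$ exactly once $q$ is large, since two distinct fractions with such bounded denominators cannot agree to within the subleading error. The divisibility $n \mid q-1$ is then typically incompatible with this pinned value (e.g. $(q-1)/q_0$ is not an integer), ruling out all but boundedly many parameter choices; the residual configurations are exactly the entries of Table~\ref{a1exctab}.

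I expect the heart of the difficulty to lie precisely in this last comparison. A crude size bound such as $k \ge (q-1)/(2f)$ does not by itself eliminate the high-dimensional geometric graphs in small characteristic, where $f = \log_p q$ is large; it is the divisibility condition $n \mid q-1$, together with the exact factorisations of the geometric subdegrees, that produces the contradiction. Carrying this out uniformly across every class — and controlling the small-characteristic regime, where $q-1$ may have many divisors below $2f$ — is the main obstacle, and it is the substance that Proposition~\ref{subdinter} is meant to supply.
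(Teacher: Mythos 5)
Your opening reduction does not work as stated. Case~(iv) of Proposition~\ref{nonabsoc} carries the hypothesis that \( G \) does not stabilize a nontrivial product decomposition, and you never verify it; for one-dimensional groups it can genuinely fail, because classes (A1) and (A2) overlap (for instance, the Paley graph of order \( 9 \) is the Hamming graph \( H(2,3) \); these coincidences account exactly for the entries \( (3^2,4) \), \( (3^4,16) \), \( (5^2,8) \) of Table~\ref{a1exctab}). For such \( G \) the 2-closure is \( \Sym(\Delta) \uparrow \Sym(2) \), which is not affine and does not have socle \( V \), so the assertion ``\( G^{(2)} \leq \AGL_f(p) \) with the same socle'' with which you begin is simply false for them. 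Moreover, Proposition~\ref{nonabsoc} is only stated ``apart from a finite number of exceptions'' that are left implicit, so it cannot by itself yield a theorem whose exceptional degrees must all be pinned down in an explicit table. The paper avoids both problems: if \( G \) and \( G^{(2)} \) have different socles then, since \( G \) is affine and hence not almost simple, Lemma~\ref{praeger2} forces \( G \) itself to preserve a product decomposition (with no unspecified exceptions), whence \( G \) has the class (A2) subdegrees \( 2(\sqrt{n}-1) \), \( (\sqrt{n}-1)^2 \), and the subdegree comparison places its parameters in the table. Your argument needs this case split before the affine classification can be applied to \( G^{(2)} \) at all.

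The second gap is in the comparison itself, which is the heart of the proof and is only sketched. Your coset argument (both subdegrees divisible by \( (q-1)/n \), with \( n \mid q-1 \) and \( n \leq 2f \), \( f = \log_p q \)) is correct and is a self-contained, though weaker, substitute for the Foulser--Kallaher conditions \( m_1 \mid m_2 \), \( (m_2/m_1) \mid d \) that the paper cites (these live in Lemma~\ref{a1subdgrdescr}, not Lemma~\ref{a1subdgr}; and the comparison of (A1) against the remaining classes is Lemma~\ref{a1subdgr} together with Lemmas~\ref{a1inb} and~\ref{a1inc} for classes (B) and (C), not Proposition~\ref{subdinter}, which compares (A2)--(A11) with one another). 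But the analytic principle you then invoke is false as stated: for \( \VO^{\epsilon}_{2m}(q_0) \) the smaller subdegree is roughly \( (q-1)/q_0 \), and for \( H_{q_0}(2,m) \) roughly \( (q-1)/q_0^{m-1} \), so the ``rational multiple \( r(q-1) \)'' has denominator growing with \( q_0 \), not bounded, and the error terms are not \( O(\sqrt{q}) \); hence ``two fractions with bounded denominators cannot agree within the subleading error'' does not apply. You concede that the crude size bound fails in small characteristic and that exact factorizations must take over, but that is precisely the content that remains unproven: in the paper, integrality of \( m_2/m_1 = q_0^{m-1}(q_0-1)/(q_0^{m-1} \pm 1) \) forces \( q_0^{m-1} \pm 1 \mid q_0 - 1 \), hence \( m = 2 \) and then \( q_0 \leq 16 \) in cases (A6)/(A7), with analogous arguments killing (A8)--(A10) and finite table checks disposing of (B) and (C). Your weaker divisibility conditions may well suffice for the same purpose (e.g.\ via bounding \( \gcd(m_1, m_2) \)), but none of these computations is carried out, so the proposal identifies the right strategy while leaving both the initial reduction and the decisive eliminations unestablished.
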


It is important to stress that there are cases when a one-dimensional rank~3 group has a nonsolvable
2-closure; see~\cite{skresanovCex} for such an example of degree~\( 2^6 \).

The main motivation for the present study is the application of Theorem~\ref{class}
to the computational 2-closure problem. Namely, the problem asks if given generators of a rank~3 group
one can find generators of its 2-closure in polynomial time. This task influenced the scope of
this work considerably, for instance, while one can find the structure of the normalizer in Theorem~\ref{class}~(iii)
depending on the type of the corresponding rank~3 graph, it is not required for the computational problem
as this normalizer can be computed in polynomial time~\cite{polynorm}.
In other cases it is possible to work directly with associated rank~3 graphs (for example, with Hamming graphs),
but in many situations a more detailed study of relevant groups is required.
The author plans to turn to this problem in his future work.

The author would like to express his gratitude to professors M.~Grechkoseeva,
I.~Ponomarenko and A.~Vasil'ev for numerous helpful comments and suggestions.

\section{Reduction to affine case}\label{nonaffsec}

We will prove Theorem~\ref{class} by dealing with rank~3 groups on a case by case basis.
Recall the following well-known general classification of rank~3 groups.

\begin{proposition}\label{rank3class}
	Let \( G \) be a permutation group of rank~\( 3 \) with socle~\( L \).
	Then \( G \) is transitive and one of the following holds:
	\begin{enumerate}[(i)]
		\item \( G \) is imprimitive,
		\item \( L \) is a direct product of two isomorphic simple groups,
			and \( G \) preserves a nontrivial product decomposition,
		\item \( L \) is nonabelian simple,
		\item \( L \) is elementary abelian.
	\end{enumerate}
\end{proposition}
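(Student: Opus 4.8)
The plan is to treat transitivity first, then to feed the primitive case into the O'Nan--Scott theorem and eliminate every socle type except the four listed by means of a rank estimate.

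First I would prove transitivity by counting \( G \)-orbits on \( \Omega \times \Omega \). If \( G \) had at least two orbits \( O_1, O_2, \dots \) on \( \Omega \), then \( O_1 \times O_1 \), \( O_1 \times O_2 \), \( O_2 \times O_1 \) and \( O_2 \times O_2 \) are four pairwise disjoint, nonempty, \( G \)-invariant subsets of \( \Omega \times \Omega \), forcing the rank to be at least \( 4 \); this contradiction shows \( G \) is transitive. If \( G \) is imprimitive we are in case (i), so from now on I assume \( G \) primitive with socle \( L \).

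Next I would apply the O'Nan--Scott theorem in the form used by Cameron (see \cite{cameronFinsimp}). It gives \( L \) elementary abelian, which is (iv), or \( L = T^k \) a product of \( k \geq 1 \) copies of a nonabelian simple group \( T \); the subcase \( k = 1 \) is (iii). For \( k \geq 2 \) the key is the inequality \( \operatorname{rank}(G) \geq \operatorname{rank}(W) \), valid whenever \( G \leq W \), since each \( W \)-orbit on \( \Omega \times \Omega \) splits into \( G \)-orbits. I would apply this with \( W = \bar{H} \wr \Sym(k) \) in product action on \( \Omega = \Delta^k \), where \( \bar{H} = N_{\Sym(\Delta)}(T) \) has rank \( s \geq 2 \) on \( \Delta \). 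The base group \( \bar{H}^k \) has \( s^k \) orbits on \( (\Delta \times \Delta)^k \), one for each choice of suborbit type in each coordinate, and \( \Sym(k) \) fuses these by permuting coordinates, so \( \operatorname{rank}(W) \) equals the number of size-\( k \) multisets over \( s \) symbols, that is \( \binom{s+k-1}{k} \). Imposing \( \binom{s+k-1}{k} \leq 3 \) forces \( k = 2 \) and \( s = 2 \); hence \( \bar{H} \) is \( 2 \)-transitive on \( \Delta \), the decomposition \( \Omega = \Delta^2 \) is preserved and \( L = T \times T \), which is exactly case (ii).

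It remains to rule out the diagonal, compound and twisted-wreath types, and this I expect to be the main obstacle, since these also have socle \( T^k \) but preserve no product decomposition. Here the suborbits of the socle are not governed by a clean multiset count; for the simple diagonal action with \( k = 2 \) they are indexed by the conjugacy classes of \( T \), of which a nonabelian simple group has at least five, and I would check that fusion by \( \Aut(T) \) together with the coordinate swap cannot reduce these below four (already for \( T = A_5 \) one is left with four suborbits). For larger \( k \), for the compound diagonal action, and for the twisted-wreath type the number of suborbits only grows, so none of these yields rank \( 3 \); rather than reprove this in detail I would invoke Cameron's determination of the primitive rank~\( 3 \) groups in \cite{cameronFinsimp}. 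Assembling the pieces gives precisely the four alternatives (i)--(iv).
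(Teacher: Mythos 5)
Your proposal is correct, and it ends where the paper's proof begins: the paper's entire argument is a citation, declaring transitivity clear and reading the primitive case off Theorem~4.1 and Proposition~5.1 of \cite{cameronFinsimp}, which already package the O'Nan--Scott analysis for small rank. You unwrap part of that citation yourself, and the comparison is instructive. Your four-subsets counting argument with the sets \( O_i \times O_j \) is exactly what ``clear'' abbreviates. Your product-action count \( \operatorname{rank}(W) = \binom{s+k-1}{k} \) is precisely the paper's Lemma~\ref{rkwr}, which the paper proves separately but deploys only later, in Proposition~\ref{prodCase}, rather than inside this proposition; this step of yours is sound, though note that passing from \( k = 2 \), \( s = 2 \) to case~(ii) with \( L = T \times T \) also needs Burnside's theorem (a 2-transitive group is affine or almost simple) to know that the component group has simple socle --- the same remark disposes of product action over diagonal-type components. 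The genuinely hard part is exactly where you stop: for diagonal, compound-diagonal and twisted-wreath socles, your observation that a nonabelian simple group has at least five conjugacy classes (with the \( A_5 \) check) is a sanity check but not a proof, since one must show that \( \Aut(T) \)-fusion together with inversion can never collapse the nontrivial classes of \emph{any} nonabelian simple \( T \) to fewer than three, and that the remaining types behave even better; deferring this to \cite{cameronFinsimp} is legitimate and is exactly what the paper itself does, so there is no gap. The trade-off is clear: your version is longer but makes visible why, among nonabelian non-simple socles, only \( \Omega = \Delta^2 \) with 2-transitive components survives, whereas the paper's two-line proof hides all of this inside Cameron's results.
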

\begin{proof}
	Transitivity part is clear.
	If \( G \) is primitive, Theorem~4.1 and Proposition~5.1 from \cite{cameronFinsimp}
	imply that it belongs to one of the last three cases from the statement.
\end{proof}

% The goal of this section is to show that in all but a finite number of cases \( G \) and \( G^{(2)} \)
% belong to the same class of this classification. We start with the imprimitive case.

Suppose that \( G \leq \Sym(\Omega) \). Observe that \( G \) acts imprimitively on \( \Omega \)
if and only if the action domain can be identified with a nontrivial Cartesian product \( \Omega = \Delta \times X \),
\( |\Delta| > 1 \), \( |X| > 1 \), where \( G \) permutes blocks of the form \( \Delta \times \{ x \} \),~\( x \in X \).
Denote by \( \Sym(\Delta) \wr \Sym(X) \leq \Sym(\Omega) \) the wreath product of \( \Sym(\Delta) \) and \( \Sym(X) \)
in the imprimitive action, so \( G \leq \Sym(\Delta) \wr \Sym(X) \).

\begin{proposition}\label{imprimCase}
	Let \( G \) be an imprimitive rank~\( 3 \) group on \( \Omega \).
	Let \( \Delta \) be a nontrivial block of imprimitivity of \( G \),
	so \( \Omega \) can be identified with \( \Delta \times X \) for some
	set \( X \). Then \( G^{(2)} = \Sym(\Delta) \wr \Sym(X) \).
\end{proposition}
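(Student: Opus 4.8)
Write \( W = \Sym(\Delta) \wr \Sym(X) \) for the imprimitive wreath product acting on \( \Omega = \Delta \times X \), so that by hypothesis \( G \leq W \) and \( W \) stabilizes the block partition \( \mathcal{B} = \{ \Delta \times \{ x \} : x \in X \} \). The plan is to establish the chain \( G^{(2)} = W^{(2)} = W \): first I would show that \( W \) itself has rank~\( 3 \) and identify its \( 2 \)-orbits explicitly, then deduce that \( G \) and \( W \) share the same \( 2 \)-orbits, and finally show that \( W \) is \( 2 \)-closed.

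For the first step I would write down the three candidate relations on \( \Omega \): the diagonal \( R_0 \), the relation \( R_1 = \{ ((\delta_1, x), (\delta_2, x)) : \delta_1 \neq \delta_2 \} \) of distinct points lying in a common block, and the relation \( R_2 \) of points lying in different blocks. Using that \( \Sym(X) \) is transitive on \( X \) while each fiber copy of \( \Sym(\Delta) \) is \( 2 \)-transitive on its block (here one uses \( |\Delta| > 1 \) and \( |X| > 1 \)), a direct check shows that \( W \) acts transitively on each of \( R_0, R_1, R_2 \); these are therefore exactly the \( 2 \)-orbits of \( W \), and \( W \) has rank~\( 3 \).

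Since \( G \leq W \), every \( W \)-orbit on \( \Omega \times \Omega \) is a union of \( G \)-orbits; as \( G \) and \( W \) both have rank~\( 3 \), each of the three \( W \)-orbits must consist of a single \( G \)-orbit, so \( G \) and \( W \) have identical \( 2 \)-orbits. By the definition of the \( 2 \)-closure this yields \( G^{(2)} = W^{(2)} \), and it remains to prove \( W^{(2)} = W \). The inclusion \( W \leq W^{(2)} \) is automatic. For the reverse inclusion, observe that \( R_0 \cup R_1 \) is an equivalence relation whose classes are precisely the blocks \( \Delta \times \{ x \} \); hence any \( g \in W^{(2)} \), preserving both \( R_0 \) and \( R_1 \), preserves this equivalence relation and therefore permutes the blocks of \( \mathcal{B} \). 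Consequently \( W^{(2)} \) is contained in the stabilizer of the partition \( \mathcal{B} \) in \( \Sym(\Omega) \). The argument is essentially formal, and the only point deserving care is the standard identification of this partition stabilizer with the imprimitive wreath product \( \Sym(\Delta) \wr \Sym(X) = W \); granting it, we obtain \( W^{(2)} \leq W \) and hence \( G^{(2)} = W^{(2)} = W \), as required.
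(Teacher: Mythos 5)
Your proof is correct, and it takes a genuinely different route from the paper at the key step. The paper argues exactly as you do at the start: it sets \( H = \Sym(\Delta) \wr \Sym(X) \), notes \( G \leq H \) with both groups of rank~3, and concludes \( G^{(2)} = H^{(2)} \) (it does not even spell out the orbit-counting argument you give explicitly, which is the correct justification). But for the step \( H^{(2)} = H \), the paper simply cites a general lemma of Kalu\v znin--Klin (see also Evdokimov--Ponomarenko) asserting that the 2-closure of an imprimitive wreath product is the wreath product of the 2-closures, \( (A \wr B)^{(2)} = A^{(2)} \wr B^{(2)} \), from which 2-closedness of \( H \) is immediate. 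You instead prove 2-closedness of \( W \) directly: you identify the three 2-orbits, observe that \( R_0 \cup R_1 \) is the ``same block'' equivalence relation which every element of \( W^{(2)} \) must preserve, and then invoke the standard fact that the full stabilizer in \( \Sym(\Omega) \) of a uniform partition is precisely the imprimitive wreath product. Your approach buys self-containedness --- no external citation is needed, and all facts used (2-transitivity of symmetric groups, the partition-stabilizer identification) are elementary; the paper's approach buys brevity and rests on a lemma that is more general, applying to arbitrary wreath products rather than only to wreath products of symmetric groups. One small point of care in your write-up: transitivity of \( W \) on the ``different blocks'' relation \( R_2 \) uses 2-transitivity of \( \Sym(X) \) on \( X \) together with the independence of the base-group action on distinct fibers, not merely transitivity of \( \Sym(X) \); the verification is still immediate.
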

\begin{proof}
	Set \( H = \Sym(\Delta) \wr \Sym(X) \), where \( \Delta \times X \)
	is identified with \( \Omega \) as in the statement of the proposition. Then \( G \leq H \)
	and since \( G \) and \( H \) are both groups of rank~\( 3 \), we have \( G^{(2)} = H^{(2)} \).
	By \cite[Lemma~2.5]{kalouj} (see also \cite[Proposition~3.1]{evdokimovOdd}), we have % 3) from prop. 3.1
	\[ (\Sym(\Delta) \wr \Sym(X))^{(2)} = \Sym(\Delta)^{(2)} \wr \Sym(X)^{(2)} = \Sym(\Delta) \wr \Sym(X), \]
	so \( H \) is \( 2 \)-closed. Hence \( G^{(2)} = H^{(2)} = H \), as claimed.
\end{proof}

Suppose that the action domain is a Cartesian power of some set: \( \Omega = \Delta^m \), \( m \geq 2 \) and \( |\Delta| > 1 \).
Denote by \( \Sym(\Delta) \uparrow \Sym(m) \) the wreath product of \( \Sym(\Delta) \) and \( \Sym(m) \)
in the product action, i.e.\ the base group acts on \( \Delta^m \) coordinatewise, while \( \Sym(m) \)
permutes the coordinates. We say that \( G \leq \Sym(\Omega) \)
preserves a nontrivial product decomposition \( \Omega = \Delta^m \)
if \( {G \leq \Sym(\Delta) \uparrow \Sym(m)} \).

If \( G \) preserves a nontrivial product decomposition \( \Omega = \Delta^m \), then \( G \)
induces a permutation group \( G_0 \leq \Sym(\Delta) \). Recall that we can identify \( G \)
with a subgroup of \( G_0 \uparrow \Sym(m) \). We need the following folklore formula
for the rank of a primitive wreath product.

\begin{lemma}\label{rkwr}
	Let \( G \) be a transitive permutation group of rank \( r \).
	Then \( G \uparrow \Sym(m) \) has rank \( \binom{r + m - 1}{m} \).
\end{lemma}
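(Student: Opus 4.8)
The plan is to compute the rank of the primitive wreath product $W = G \uparrow \Sym(m)$ via suborbits rather than $2$-orbits directly. Recall the standard fact that for a transitive permutation group the rank equals the number of orbits of a point stabilizer on the underlying set. First I would observe that $W$ is transitive on $\Omega = \Delta^m$: this is immediate, since the base group $G^m$ already acts transitively on $\Delta^m$ coordinatewise, $G$ being transitive on $\Delta$. Thus the rank of $W$ equals the number of orbits of the stabilizer $W_{\bar\alpha}$ of a base point $\bar\alpha = (\alpha, \dots, \alpha)$ on $\Omega$.

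Next I would identify this stabilizer. In the product action the stabilizer of $\bar\alpha$ is $W_{\bar\alpha} = G_\alpha \uparrow \Sym(m) = G_\alpha^m \rtimes \Sym(m)$, where $G_\alpha$ is the stabilizer of $\alpha$ in $G$ and $\Sym(m)$ permutes the $m$ coordinates. Let $\Delta_1, \dots, \Delta_r$ be the orbits of $G_\alpha$ on $\Delta$; there are exactly $r$ of them, again by transitivity of $G$. The key step is then to describe the orbits of $W_{\bar\alpha}$ on $\Delta^m$ in two stages, first under the base group $G_\alpha^m$ and afterwards under the top group $\Sym(m)$.

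For the base group: since $G_\alpha^m$ acts coordinatewise, the $G_\alpha^m$-orbit of a point $(\delta_1, \dots, \delta_m)$ is the product $\Delta_{c_1} \times \cdots \times \Delta_{c_m}$, where $c_i \in \{1, \dots, r\}$ is the index of the $G_\alpha$-orbit containing $\delta_i$. Hence the $G_\alpha^m$-orbits on $\Delta^m$ are in bijection with vectors $(c_1, \dots, c_m) \in \{1, \dots, r\}^m$. The top group $\Sym(m)$ permutes the coordinates, and under this bijection that action is precisely $\Sym(m)$ permuting the entries of these vectors. Therefore the orbits of $W_{\bar\alpha} = G_\alpha^m \rtimes \Sym(m)$ correspond to the $\Sym(m)$-orbits on $\{1, \dots, r\}^m$, that is, to multisets of size $m$ drawn from an $r$-element set.

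Finally I would count: the number of such multisets is $\binom{r + m - 1}{m}$, which yields the claimed rank. I do not expect any genuine obstacle; the only points requiring a little care are fixing the convention for the product action so that the stabilizer of $\bar\alpha$ really is $G_\alpha \uparrow \Sym(m)$, and verifying that the induced $\Sym(m)$-action on the index vectors is the naive coordinate permutation. The concluding combinatorial identity, namely that multisets of size $m$ over $r$ symbols number $\binom{r + m - 1}{m}$, is standard.
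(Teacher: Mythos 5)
Your proof is correct and follows essentially the same route as the paper: both compute the rank as the number of orbits of the stabilizer of the diagonal point \( (\alpha,\dots,\alpha) \), identify that stabilizer as \( G_\alpha \uparrow \Sym(m) \), and count the resulting orbits as multisets of size \( m \) from the \( r \) suborbits of \( G_\alpha \). Your version merely spells out the two-stage orbit analysis (base group, then top group) that the paper compresses into its choice of orbit representatives \( (\alpha_{i_1},\dots,\alpha_{i_m}) \) with \( i_1 \leq \dots \leq i_m \).
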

\begin{proof}
	Let \( G \leq \Sym(\Delta) \), and recall that \( \Gamma = G \uparrow \Sym(m) \)
	acts on \( \Delta^m \). Choose \( \alpha_1 \in \Delta \) and set
	\( \overline{\alpha_1} = (\alpha_1, \dots, \alpha_1) \in \Delta^m \).
	Let \( \alpha_1, \dots, \alpha_r \) be representatives
	of orbits of \( G_{\alpha_1} \) on \( \Delta \). 
	Since the point stabilizer \( \Gamma_{\overline{\alpha_1}} \) is equal to \( G_{\alpha_1} \uparrow \Sym(m) \),
	points \( (\alpha_{i_1}, \dots, \alpha_{i_m}) \), where \( 1 \leq i_1 \leq \dots \leq i_m \leq r \),
	form a set of representatives of orbits of \( \Gamma_{\overline{\alpha_1}} \) on \( \Delta^m \).
	The number of indices \( i_1, \dots, i_m \) satisfying \( {1 \leq i_1 \leq \dots \leq i_m \leq r} \)
	is equal to the number of weak compositions of \( m \) into \( r \) parts, hence the claim is proved.
\end{proof}

Observe that in the particular case when \( \Omega = \Delta^2 \), the wreath product \( \Sym(\Delta) \uparrow \Sym(2) \)
has rank~3 and one of its 2-orbits can be viewed as edges of the Hamming graph \( H(2, |\Delta|) \).

\begin{proposition}\label{prodCase}
	Let \( G \) be a primitive rank~\( 3 \) permutation group on \( \Omega \)
	preserving a nontrivial product decomposition \( \Omega = \Delta^m \), \( m \geq 2 \).
	Then \( m = 2 \), a \( 2 \)-orbit of \( G \) induces a Hamming graph and \( G^{(2)} = \Sym(\Delta) \uparrow \Sym(2) \).
\end{proposition}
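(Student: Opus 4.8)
The plan is to run the whole argument through the rank formula of Lemma~\ref{rkwr}, using it to pin down both \( m \) and the structure of the induced group on \( \Delta \). Write \( \Gamma = G_0 \uparrow \Sym(m) \), where \( G_0 \leq \Sym(\Delta) \) is the group induced by \( G \) on a coordinate; since \( G \) is transitive on \( \Omega = \Delta^m \) and \( |\Delta| > 1 \), the group \( G_0 \) is transitive on \( \Delta \) and hence has some rank \( r \geq 2 \). Because \( G \leq \Gamma \), the \( G \)-orbits on \( \Omega \times \Omega \) refine those of \( \Gamma \), so \( 3 = \operatorname{rank}(G) \geq \operatorname{rank}(\Gamma) = \binom{r + m - 1}{m} \) by Lemma~\ref{rkwr}. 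First I would observe that for \( r \geq 2 \) and \( m \geq 2 \) the quantity \( \binom{r+m-1}{m} \) is strictly increasing in each of \( r \) and \( m \), so it is at least \( 3 \) with equality only at \( r = m = 2 \); combined with the upper bound \( 3 \), this forces \( m = 2 \) and shows that \( G_0 \) is \( 2 \)-transitive.

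Next, with \( m = 2 \) the three \( 2 \)-orbits of \( \Sym(\Delta) \uparrow \Sym(2) \) are the diagonal, the set of pairs agreeing in exactly one coordinate, and the set of pairs differing in both coordinates; the middle orbit is the edge set of the Hamming graph \( H(2, |\Delta|) \). Since equality holds in the rank bound above, \( G \) and \( \Gamma = G_0 \uparrow \Sym(2) \) have identical \( 2 \)-orbits, and as \( G_0 \uparrow \Sym(2) \leq \Sym(\Delta) \uparrow \Sym(2) \) with the larger group also of rank~\( 3 \), all three groups share the same \( 2 \)-orbits. Consequently \( G^{(2)} = (\Sym(\Delta) \uparrow \Sym(2))^{(2)} \), and it remains to prove that the product-action wreath product \( \Sym(\Delta) \uparrow \Sym(2) \) is \( 2 \)-closed, equivalently that \( \Aut(H(2, n)) = \Sym(\Delta) \uparrow \Sym(2) \) for \( n = |\Delta| \).

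The hard part will be this last identification, which is the only place where genuine graph theory enters. The inclusion \( \supseteq \) is immediate. For \( \subseteq \) I would use the clique structure of the rook's graph \( H(2,n) \): when \( n \geq 3 \) a short case analysis shows that every triangle lies inside a single row \( \{\delta\} \times \Delta \) or column \( \Delta \times \{\delta\} \), so the maximal cliques are exactly the \( 2n \) rows and columns, each of size \( n \). Any automorphism permutes these maximal cliques; since two rows (or two columns) are disjoint while a row and a column meet in a single vertex, the partition into the row-class and the column-class is preserved up to a possible swap, which is precisely the data of an element of \( \Sym(\Delta) \uparrow \Sym(2) \). This gives the reverse inclusion for \( n \geq 3 \), and the degenerate case \( n = 2 \), where \( H(2,2) \) is a \( 4 \)-cycle with \( \Aut(H(2,2)) = D_8 = \Sym(2) \uparrow \Sym(2) \), is checked directly; alternatively one may simply cite the classical determination of the automorphism groups of Hamming graphs. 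I expect the clique argument to be the main obstacle, though it is entirely standard.
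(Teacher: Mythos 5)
Your proof is correct, and its skeleton coincides with the paper's: apply Lemma~\ref{rkwr} to force \( m = 2 \), deduce that \( G \) and \( \Sym(\Delta) \uparrow \Sym(2) \) share their 2-orbits so that \( G^{(2)} = (\Sym(\Delta) \uparrow \Sym(2))^{(2)} \), and finish by identifying \( \Aut(H(2,n)) \) with \( \Sym(\Delta) \uparrow \Sym(2) \). The two points of divergence are minor but worth noting. First, you feed Lemma~\ref{rkwr} the group \( \Gamma = G_0 \uparrow \Sym(m) \) built from the induced component \( G_0 \), rather than \( \Sym(\Delta) \uparrow \Sym(m) \) as the paper does; this costs you the embedding \( G \leq G_0 \uparrow \Sym(m) \) (standard, and recalled in the paper just before the lemma) plus a monotonicity check of \( \binom{r+m-1}{m} \), and buys the extra conclusion that \( G_0 \) is 2-transitive, which is never used --- the paper's choice of the full symmetric group, whose rank is 2 outright, is slightly more economical. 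Second, and more substantively, where the paper disposes of the crux by citing \cite[Theorem~9.2.1]{brouwerDRG} for \( \Aut(H(2,n)) = \Sym(n) \uparrow \Sym(2) \), you reprove that theorem via the maximal cliques of the rook's graph; the argument (every triangle lies in a row or column, so for \( n \geq 3 \) the maximal cliques are the \( 2n \) rows and columns, the row/column bipartition is preserved up to a swap since rows meet columns in one vertex but rows are pairwise disjoint, and the row--column intersections then pin down the vertex map) is correct and standard, and the separate case \( n = 2 \) you check is in fact vacuous here, since there is no primitive rank~3 group of degree~4. Your version is self-contained where the paper's is shorter; either constitutes a complete proof.
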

\begin{proof}
	Let \( H \) denote \( \Sym(\Delta) \uparrow \Sym(m) \), and recall that by Lemma~\ref{rkwr},
	\( H \)	has rank~\( {{2+m-1}\choose m} = m+1 \) as a permutation group.
	Since \( G \leq H \), we have \( m+1 \leq 3 \).
	Therefore \( m = 2 \) and \( H \) is a rank~\( 3 \) group. Then \( G^{(2)} = H^{(2)} \)
	and it suffices to show that \( H \) is \( 2 \)-closed.

	A 2-orbit of \( H \) induces the Hamming graph \( H(2, q) \) on \( \Omega \), where \( q = |\Delta| \).
	By \cite[Theorem~9.2.1]{brouwerDRG}, \( \Aut(H(2, q)) = \Sym(q) \uparrow \Sym(2) \). It readily follows
	that \( H^{(2)} = \Aut(H(2, q)) = H \), completing the proof.
\end{proof}

In order to find 2-closures in the last two cases of Proposition~\ref{rank3class},
we need to show that 2-closure almost always preserves the socle of a rank~3 group.

\begin{lemma}\label{praeger2}
	Let \( G \) be a primitive rank~\( 3 \) permutation group
	and suppose that \( G \) and \( G^{(2)} \) have different socles.
	Then either \( G \) preserves a nontrivial product decomposition, or \( G \)
	belongs to a finite set of almost simple groups.
\end{lemma}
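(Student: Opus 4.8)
The plan is to use the classification of primitive rank~3 groups (Proposition~\ref{rank3class}) together with the structural constraints that force the socle of~$G^{(2)}$ to differ from that of~$G$, and then invoke a known result (presumably due to Praeger and collaborators—hence the label \texttt{praeger2}) that classifies when two primitive groups with distinct socles share the same 2-orbits. Since we are assuming $G$ is primitive, by Proposition~\ref{rank3class} its socle $L$ is either a direct product of two isomorphic nonabelian simple groups (with $G$ preserving a product decomposition), nonabelian simple, or elementary abelian. The first case is precisely the product-decomposition case listed in the conclusion, so I would set it aside immediately and concentrate on showing that in the remaining two cases, $G$ and $G^{(2)}$ having different socles forces $G$ into a finite list.

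\medskip

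\noindent\textbf{Key steps.} First I would observe that $G^{(2)}$ is itself a primitive rank~3 group containing~$G$, so $G^{(2)}$ is again governed by Proposition~\ref{rank3class}. The heart of the argument is that passing to the 2-closure preserves the combinatorial structure (the rank~3 graph) while potentially enlarging the group; so if $\operatorname{soc}(G) \neq \operatorname{soc}(G^{(2)})$, then the \emph{same} strongly regular graph admits two primitive rank~3 automorphism groups with different socles. The natural tool here is the theorem of Praeger and Saxl (or the analogous result on inclusions of primitive groups with distinct socles, as developed in the O'Nan--Scott framework) that bounds such coincidences. Concretely, I would argue that a containment $G \leq G^{(2)}$ of primitive groups where the socle strictly grows is highly restrictive: either both groups fall into the product-action type (yielding the product-decomposition exception), or the pair $(\operatorname{soc}(G), \operatorname{soc}(G^{(2)}))$ is one of finitely many ``nested'' inclusions of almost simple groups on the same domain. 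The latter is exactly the finite exceptional list in the conclusion.

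\medskip

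\noindent\textbf{Execution.} For the affine case ($L$ elementary abelian), the socle is the translation group $F^a$, which is always the unique minimal normal subgroup of any affine group containing it; so if $G$ is affine and primitive, $G^{(2)}$ cannot have a different socle unless $G^{(2)}$ fails to be affine—and I would handle this by noting that an affine primitive group's 2-closure, having the same 2-orbits, remains transitive with the same regular normal subgroup, hence stays affine with the same socle $F^a$. (This is where one must be slightly careful: the point is that the regular elementary abelian normal subgroup is recovered from the 2-orbit structure via the rank~3 graph's translation automorphisms.) Thus the affine case contributes no exceptions at all. For the almost simple case, I would appeal directly to the classification of primitive permutation groups with a transitive proper subgroup, or more sharply to the tables of rank~3 graphs admitting more than one primitive rank~3 group: such coincidences are enumerated in the literature on rank~3 graphs (e.g.\ via~\cite{srgw}), and outside finitely many small-degree graphs the automorphism group's socle agrees with that of every rank~3 subgroup.

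\medskip

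\noindent\textbf{The main obstacle} I anticipate is the almost simple case, specifically pinning down that the set of exceptional inclusions is genuinely \emph{finite} rather than merely sparse. One cannot simply quote a single clean theorem; instead one must combine the Liebeck--Saxl--Praeger analysis of maximal factorizations and subgroup containments with the explicit list of rank~3 graphs. The delicate point is ruling out infinite families where a rank~3 graph of unbounded degree admits two essentially different primitive rank~3 groups—for instance, ensuring that familiar graphs on classical geometries do not secretly carry a larger-socle action for infinitely many parameters. I expect this to reduce, after invoking the classification of rank~3 groups cited in the introduction, to checking that for each infinite family the socle of the full automorphism group is forced, leaving only sporadic low-degree coincidences in the finite exceptional set.
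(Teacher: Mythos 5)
Your affine case contains a genuine error, and it is exactly the point where the lemma is delicate. You claim that if \( G \) is primitive affine then \( G^{(2)} \) keeps the elementary abelian socle, because the translation subgroup is ``recovered from the 2-orbit structure.'' This is false: the translation group \( V \) is certainly a \emph{regular} subgroup of \( G^{(2)} \), but normality is not reflected in 2-orbits, and \( V \) need not be normal in \( G^{(2)} \). Concretely, take \( G = \AGL_1(p) \uparrow \Sym(2) \leq \AGL_2(p) \) with \( p \geq 5 \) prime, i.e.\ a group of Hamming type (class (A2) of Theorem~\ref{affrankthree}). This \( G \) is primitive of rank 3 (Lemma~\ref{rkwr}) and affine: its socle is the translation group of order \( p^2 \). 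Its rank 3 graph is the Hamming graph \( H(2,p) \), so \( G^{(2)} = \Aut(H(2,p)) = \Sym(p) \uparrow \Sym(2) \) by \cite[Theorem~9.2.1]{brouwerDRG}, whose socle is \( \operatorname{Alt}(p) \times \operatorname{Alt}(p) \). Thus affine groups with \( \operatorname{soc}(G) \neq \operatorname{soc}(G^{(2)}) \) exist in infinitely many degrees, and your argument purports to rule them out. The root of the problem is your opening move: you identified the product-decomposition alternative in the conclusion with case (ii) of Proposition~\ref{rank3class} (socle \( T \times T \)) and set it aside, but ``preserves a nontrivial product decomposition'' is not a socle-determined property --- affine groups can and do preserve product decompositions, and the lemma is only true because such groups are allowed to escape through that branch. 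A correct proof must, in the affine case with differing socles, \emph{derive} that \( G \) preserves a product decomposition, not derive a contradiction.

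The dichotomy you would need --- different socles implies either \( G \) preserves a nontrivial product decomposition or both \( G \) and \( G^{(2)} \) are almost simple with distinct socles --- is precisely \cite[Theorem~2]{praegerClosure}, which the paper simply quotes; there is no cheap O'Nan--Scott shortcut of the kind you sketch, since the failure of normality under 2-closure is the entire content of that theorem. For the almost simple case your instinct is essentially right and matches the paper: the finiteness you worry about in your ``main obstacle'' paragraph does not have to be re-proved by hand, as it is \cite[Theorem~1]{liebeck2Closure}, applied to the finitely many rank 3 groups among the exceptions listed there.
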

\begin{proof}
	From \cite[Theorem~2]{praegerClosure} it follows that either \( G \) preserves a nontrivial
	product decomposition, or \( G \) and \( G^{(2)} \) are almost simple groups with different socles.
	The latter situation applies only to a finite number of rank~3 groups, by \cite[Theorem~1]{liebeck2Closure},
	so the claim follows.
\end{proof}

% a decomposition? Any decompositions?
\begin{lemma}\label{asprod}
	Let \( G \) be a primitive rank~\( 3 \) permutation group with nonabelian simple socle.
	Then \( G \) does not preserve a nontrivial product decomposition.
\end{lemma}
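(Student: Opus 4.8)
The plan is to argue by contradiction, reducing at once to the Hamming case and then reading off the socle structure from the classification of primitive rank~3 groups. Suppose \( G \) is primitive of rank~3 with nonabelian simple socle \( L \) and, contrary to the claim, preserves a nontrivial product decomposition \( \Omega = \Delta^m \), \( m \geq 2 \). By Proposition~\ref{prodCase} we then have \( m = 2 \), a 2-orbit of \( G \) is the edge set of the Hamming graph \( H(2,q) \) with \( q = |\Delta| \), and \( G \leq W := \Sym(\Delta) \uparrow \Sym(2) \) with \( G^{(2)} = W \). Write \( \pi \colon W \to \Sym(2) \) for the projection onto the top group, with kernel the base group \( B = \Sym(\Delta) \times \Sym(\Delta) \).

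First I would pin down the position of \( L \) inside \( W \). If \( G \leq B \), then the partition of \( \Omega \) into the ``rows'' \( \{\delta\} \times \Delta \) would be a nontrivial system of blocks, so primitivity of \( G \) forces \( \pi(G) = \Sym(2) \). Since \( L \) is nonabelian simple it has no subgroup of index~\( 2 \), hence \( \pi(L) = 1 \) and \( L \leq B \). The two coordinate projections \( \phi_1, \phi_2 \colon L \to \Sym(\Delta) \) are then each either trivial or injective; a trivial projection would make \( L \) fix a coordinate and thus act intransitively on \( \Omega \), which is impossible since the socle of a primitive group is transitive. Therefore both \( \phi_1, \phi_2 \) are faithful, and being projections of the transitive group \( L \) they are transitive of degree~\( q \). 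Finally, a swap element \( \tau \in G \) with \( \pi(\tau) \neq 1 \) normalizes \( L \) and interchanges the coordinates, so conjugation by \( \tau \) carries \( \phi_1 \) to \( \phi_2 \) up to an automorphism of \( L \); in particular \( \phi_1 \) and \( \phi_2 \) are permutation equivalent modulo \( \Aut(L) \), and \( \phi_1(L) \) is a transitive simple normal subgroup of the component \( G_1 = \phi_1(G \cap B) \).

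At this point \( L \) is a nonabelian simple group admitting two faithful transitive degree-\( q \) representations whose product is transitive on \( \Delta^2 \) and which, under \( G \), yields a rank-\( 3 \) action on \( H(2,q) \). The decisive step is to show that no such configuration exists, and for this I would invoke the classification: by Cameron's Theorem~4.1(ii)(a) (see \cite{cameronFinsimp}, and case~(ii) of Proposition~\ref{rank3class}), every primitive rank-\( 3 \) group preserving a nontrivial product decomposition has socle isomorphic to \( T \times T \) for a nonabelian simple group \( T \), with \( G \) embedding in the primitive wreath product of a \( 2 \)-transitive almost simple group. This contradicts the assumption that the socle \( L \) of \( G \) is nonabelian simple, which finishes the proof. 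I expect the genuine obstacle to lie precisely in this last point: a soft argument does not suffice, since an almost simple group can in principle stabilize a Cartesian decomposition, and it is the rank-\( 3 \) hypothesis, encoded in the cited classification, that forces the socle to split as \( T \times T \). One could instead attempt a self-contained contradiction, noting that the rank-\( 3 \) condition forces the point stabilizer \( \phi_1(L)_\delta \) (a maximal subgroup of \( L \) of index~\( q \)) to act \( 2 \)-transitively of degree~\( q \) via \( \phi_2 \), which yields a factorization \( L = \phi_1(L)_\delta\,\phi_2(L)_{\delta'} \) with both factors of index~\( q \) and one of them \( 2 \)-transitive; ruling this out through the classification of factorizations of simple groups is possible but considerably longer than citing the rank-\( 3 \) classification directly.
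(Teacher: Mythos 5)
Your setup is sound: the reduction to \( m = 2 \) via Proposition~\ref{prodCase}, the observation that primitivity forces \( \pi(G) = \Sym(2) \) and hence \( L \leq B \) (no index-2 subgroups in a nonabelian simple group), and the fact that both coordinate projections of \( L \) are faithful and transitive and are interchanged, up to \( \Aut(L) \), by any element of \( G \) lying over the transposition. The gap is exactly at the step you yourself call decisive, and it is fatal as written. Cameron's Theorem~4.1 and Proposition~\ref{rank3class} are statements of the form ``one of the following cases holds'', classifying primitive groups by socle type; they say that socle \( T \times T \) of product type yields an embedding into a product-action wreath product, not the converse implication that any primitive rank~3 subgroup of \( \Sym(\Delta) \uparrow \Sym(2) \) has socle \( T \times T \). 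That converse is false without the rank~3 hypothesis: almost simple groups with socle \( A_6 \), \( M_{12} \) or \( \operatorname{Sp}_4(q) \) (\( q \) even) do preserve nontrivial product decompositions, which is precisely why the paper warns, immediately after this lemma, that higher-rank almost simple groups can do so (citing \cite[Section~1.3]{praegerDecomp}). And with the rank~3 hypothesis, the converse is exactly the content of Lemma~\ref{asprod}: the mutual exclusivity of cases (ii) and (iii) of Proposition~\ref{rank3class} is what is being proved, so invoking it as ``encoded in the cited classification'' assumes the conclusion. The argument is circular.

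What is missing is an external input bounding which almost simple primitive groups can preserve a Cartesian decomposition at all. The paper gets this from \cite[Theorem~8.21]{praegerDecomp}: such a socle must be \( A_6 \) with \( |\Omega| = 36 \), \( M_{12} \) with \( |\Omega| = 144 \), or \( \operatorname{Sp}_4(q) \), \( q \geq 4 \) even, with \( |\Omega| = q^4(q^2-1)^2 \); one then checks against the tables of almost simple rank~3 groups in~\cite{buekenhout} that none of these actions has rank~3. Your closing sketch --- extracting from transitivity and the rank~3 condition a factorization \( L = \phi_1(L)_{\delta}\,\phi_2(L)_{\delta'} \) with both factors of index \( q \), and ruling it out via the classification of factorizations of simple groups --- is in substance the correct route (it is how classifications like \cite[Theorem~8.21]{praegerDecomp} are proved), but you explicitly defer it rather than carry it out, so it cannot rescue the main line of the proof.
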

\begin{proof}
	Suppose this is not the case and \( G \) is a primitive rank~3 permutation group on \( \Omega \)
	preserving a nontrivial product decomposition and having a nonabelian simple socle \( L \).
	Then \cite[Theorem~8.21]{praegerDecomp} implies that either \( L \) is \( A_6 \) and \( |\Omega| = 36 \),
	or \( L = M_{12} \) and \( |\Omega| = 144 \), or \( L = \operatorname{Sp}_4(q) \), \( q \geq 4 \), \( q \) even
	and \( |\Omega| = q^4(q^2-1)^2 \). One can easily check that neither of these situations occurs in rank~3
	by inspecting the classification of almost simple rank~3 groups.
	The reader is referred to~\cite[Table~5]{buekenhout} for alternating socles, \cite[Table~9]{buekenhout} for sporadic socles
	and \cite[Tables~6 and 7]{buekenhout} for classical socles.
\end{proof}

It should be noted that an almost simple group with rank larger than~3 might preserve a nontrivial
product decomposition, see~\cite[Section~1.3]{praegerDecomp}.

\begin{proposition}\label{almsimp}
	Let \( G \) be a primitive rank~\( 3 \) permutation group on \( \Omega \)
	with nonabelian simple socle \( L \). Then apart from a finite number of exceptions,
	\( G^{(2)} \) has socle \( L \) and \( G^{(2)} = N_{\Sym(\Omega)}(L) \).
\end{proposition}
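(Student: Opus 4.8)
The plan is to first reduce to the situation where \( G^{(2)} \) has the same socle as \( G \), and then to identify \( G^{(2)} \) with the normalizer \( N := N_{\Sym(\Omega)}(L) \). Since \( L \) is nonabelian simple, Lemma~\ref{asprod} shows that \( G \) does not preserve a nontrivial product decomposition, so Lemma~\ref{praeger2} applies and guarantees that, outside a finite set of almost simple groups, \( G \) and \( G^{(2)} \) have the same socle \( L \). Assume from now on that \( \operatorname{soc}(G^{(2)}) = L \). As the socle is a characteristic subgroup, \( L \) is normal in \( G^{(2)} \), whence \( G^{(2)} \leq N \). Because \( G \) is primitive of almost simple type we have \( C_{\Sym(\Omega)}(L) = 1 \), so \( N \) embeds into \( \Aut(L) \); in particular \( N \) is itself almost simple with socle \( L \). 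It therefore remains to prove the reverse inclusion \( N \leq G^{(2)} \).

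Next I would exploit that \( L \unlhd G \) forces \( G \leq N \). A permutation group containing the primitive group \( G \) is again primitive, and its orbits on \( \Omega \times \Omega \) are unions of the \( 2 \)-orbits of \( G \); since \( G \) has rank~\( 3 \), the group \( N \) is primitive of rank~\( 2 \) or~\( 3 \). Suppose \( N \) has rank~\( 3 \). Since \( N \) is transitive the diagonal is one of its \( 2 \)-orbits, and each of the other two \( N \)-orbits on \( \Omega \times \Omega \) is a union of the nondiagonal \( 2 \)-orbits \( O_1, O_2 \) of \( G \); as there are exactly two nondiagonal \( N \)-orbits, they must equal \( O_1 \) and \( O_2 \). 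Hence \( N \) and \( G \) have the same \( 2 \)-orbits, so \( N \leq G^{(2)} \), giving \( N = G^{(2)} \) as required. The whole proposition is thus reduced to a single assertion: apart from finitely many exceptions, \( N \) is not \( 2 \)-transitive.

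The main obstacle is precisely this last step. Assume \( N \) is \( 2 \)-transitive; then \( N \) is a \( 2 \)-transitive almost simple group with socle \( L \) that contains the rank~\( 3 \) group \( G \). Passing to the level of suborbits, let \( S \) denote the set of nontrivial orbits of \( L_\alpha \) on \( \Omega \); then \( N/L \) acts transitively on \( S \) while \( G/L \) has exactly two orbits on \( S \). For the infinite families arising in the classification of almost simple rank~\( 3 \) groups (\cite{bannai} for alternating socle, \cite{kantor} and \cite{liebeckSaxl} for the remaining socles, conveniently tabulated in~\cite{buekenhout}) the socle \( L \) already has rank~\( 3 \) once the degree is large, so \( |S| = 2 \). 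In that case \( 2 \)-transitivity of \( N \) requires an automorphism of \( L \) interchanging the two suborbits, equivalently an isomorphism of the rank~\( 3 \) graph \( \Gamma \) onto its complement; this forces the two subdegrees to coincide, \( k = l = (|\Omega|-1)/2 \), i.e.\ conference-type parameters. Inspecting the classification, these equal-subdegree configurations occur for only finitely many degrees, so for all sufficiently large \( |\Omega| \) the subdegrees differ and no such interchange exists: \( N \) has rank~\( 3 \).

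Finally, the residual configurations --- the bounded list of groups where the socle is already of rank greater than~\( 3 \), the finitely many equal-subdegree graphs, and the finite exceptional set supplied by Lemma~\ref{praeger2} --- are collected into the finite exceptional set of the statement. I expect the genuine difficulty to lie entirely in the third paragraph: one must pass through the classification tables to verify both that \( L \) is of rank~\( 3 \) and that its two subdegrees are distinct for every sufficiently large degree, thereby ruling out that the full normalizer fuses the two \( 2 \)-orbits of \( G \).
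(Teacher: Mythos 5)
Your first two paragraphs reproduce the paper's argument exactly: Lemmas~\ref{asprod} and~\ref{praeger2} reduce everything to the case where \( G^{(2)} \) has socle \( L \), so that \( G^{(2)} \leq N := N_{\Sym(\Omega)}(L) \), and since the overgroup \( N \) of \( G \) has rank 2 or 3, the whole proposition comes down to showing that \( N \) is not 2-transitive (outside a finite list). The gap is in your third paragraph, where this last step is handled. You reduce it to two assertions about the rank~3 classification: (1) in the infinite families the socle itself already has rank~3 once the degree is large, and (2) almost simple rank~3 groups with equal subdegrees (the conference-type parameters forced by self-complementarity) occur for only finitely many degrees. Neither assertion is proved; both are deferred to ``inspecting the classification,'' and you yourself flag this as the real difficulty. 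These are not citable off-the-shelf facts: the fusion phenomenon in (1) genuinely occurs --- \( \mathrm{P}\GamL_2(8) \) of degree \( 36 \) is a primitive rank~3 group whose socle \( \PSL_2(8) \) has rank~\( 5 \) (subdegrees \( 1,7,7,7,14 \), fused to \( 1,21,14 \) by the field automorphism) --- so one must genuinely trawl the tables of \cite{bannai}, \cite{kantor}, \cite{liebeckSaxl} to confirm that this happens only finitely often, and similarly for (2). As written, the proof is therefore incomplete at precisely the point where all the content lies.

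What you are missing is that the 2-transitive alternative can be killed by the classification of 2-transitive groups rather than by the classification of rank~3 groups, and then no table inspection is needed at all. This is the paper's route: if \( N \) is 2-transitive, then (as you showed) \( N \) is an almost simple 2-transitive group with socle \( L \); by Theorem~5.3~(S) of \cite{cameronFinsimp}, apart from finitely many exceptions the socle of a 2-transitive almost simple group is itself 2-transitive; but \( L \leq G \), so 2-transitivity of \( L \) would force \( G \) to be 2-transitive, contradicting rank~3. This one-line finish exploits the containment \( L \leq G \) --- a containment your self-complementarity argument never uses --- and it replaces both of your unverified claims simultaneously.
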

\begin{proof}
	By Lemma~\ref{asprod}, \( G \) does not preserve a nontrivial product decomposition,
	hence by Lemma~\ref{praeger2}, apart from finitely many exceptions 2-closure \( G^{(2)} \) has the same socle as \( G \).
	Set \( N = N_{\Sym(\Omega)}(L) \). Clearly \( G^{(2)} \leq N \), and to establish equality it suffices to show
	that \( N \) is a rank~3 group. 

	Suppose that this not the case and \( N \) is 2-transitive.
	By \cite[Proposition~5.2]{cameronFinsimp}, \( N \) has a unique minimal normal subgroup,
	and since \( L \) is a minimal normal subgroup of \( N \), the socle of \( N \) must be equal to \( L \).
	Hence \( N \) is an almost simple 2-transitive group with socle \( L \).

	The possibilities for a socle of a 2-transitive almost simple group are all known
	and moreover, apart from finitely many cases such a socle is a 2-transitive group itself
	(see Theorem~5.3~(S) and the following notes in~\cite{cameronFinsimp}). % TODO: Note 2
	Since \( L \leq G \), and \( G \) is not 2-transitive, we yield a contradiction.
	Therefore \( N \) is a rank~3 group and \( G^{(2)} = N \).
\end{proof}

We summarize the results of this section in the following.
\begin{proposition}\label{nonabsoc}
	Let \( G \) be a rank~3 permutation group on \( \Omega \).
	Then apart from a finite number of exceptions exactly one of the following holds.
	\begin{enumerate}[(i)]
		\item \( G \) is imprimitive, i.e.\ it preserves a nontrivial decomposition
			\( \Omega = \Delta \times X \).
			Then \( G^{(2)} = \Sym(\Delta) \wr \Sym(X) \).
		\item \( G \) is primitive and preserves a product decomposition
			\( \Omega = \Delta^2 \). Then \( G^{(2)} = \Sym(\Delta) \uparrow \Sym(2) \).
		\item \( G \) is a primitive almost simple group with socle \( L \), i.e.\ \( L \unlhd G \leq \Aut(L) \).
			Then \( G^{(2)} = N_{\Sym(\Omega)}(L) \), and \( G^{(2)} \) is almost simple with socle \( L \).
		\item \( G \) is a primitive affine group which does not stabilize a product decomposition.
			Then \( G^{(2)} \) is also an affine group.
	\end{enumerate}
\end{proposition}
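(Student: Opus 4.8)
The plan is to assemble the statement from the classification of rank~3 groups in Proposition~\ref{rank3class} together with the case-by-case results established above, sorting each group according to whether it is primitive and whether it preserves a nontrivial product decomposition. The key point is that the four cases of the present proposition are distinguished not by the socle alone --- the socle-based alternatives of Proposition~\ref{rank3class} are genuinely ambiguous, since a socle of the form \( \mathbb{Z}_p \times \mathbb{Z}_p \) qualifies both as a product of two isomorphic simple groups and as an elementary abelian group --- but rather by the finer data of primitivity and the existence of a product decomposition.

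First I would dispose of the imprimitive groups: if \( G \) is imprimitive, Proposition~\ref{imprimCase} gives \( G^{(2)} = \Sym(\Delta) \wr \Sym(X) \), placing \( G \) in case~(i). For the primitive groups I would then split according to whether \( G \) preserves a nontrivial product decomposition. If it does, Proposition~\ref{prodCase} forces \( \Omega = \Delta^2 \) and yields \( G^{(2)} = \Sym(\Delta) \uparrow \Sym(2) \); this is case~(ii), and it captures \emph{every} primitive rank~3 group preserving a product decomposition, irrespective of the structure of its socle. In particular an affine group such as \( \AGL_1(p) \uparrow \Sym(2) \), whose socle is elementary abelian yet which preserves a product decomposition, is thereby assigned to case~(ii) and not to case~(iv).

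It remains to treat a primitive \( G \) that preserves no product decomposition. By Proposition~\ref{rank3class} its socle \( L \) is then either nonabelian simple or elementary abelian, and these alternatives are mutually exclusive. In the first case Proposition~\ref{almsimp} gives, apart from finitely many exceptions, that \( G^{(2)} = N_{\Sym(\Omega)}(L) \) is almost simple with socle \( L \), which is case~(iii); here Lemma~\ref{asprod} is what guarantees that such a \( G \) genuinely falls outside case~(ii). In the second case \( G \) is affine, and I would argue through Lemma~\ref{praeger2}: were \( G \) and \( G^{(2)} \) to have distinct socles, then, since \( G \) preserves no product decomposition, the lemma would force \( G \) into a finite set of almost simple groups, contradicting the fact that an affine group has abelian socle and so cannot be almost simple. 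Hence \( G \) and \( G^{(2)} \) share the same elementary abelian socle and \( G^{(2)} \) is affine, which is case~(iv).

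Finally I would check that \emph{exactly one} case applies: imprimitivity separates~(i) from the rest; the product-decomposition criterion separates~(ii) from~(iii) and~(iv), using Lemma~\ref{asprod} for~(iii); and the socle type, nonabelian simple versus elementary abelian, separates~(iii) from~(iv). The only finite exceptional set enters through Proposition~\ref{almsimp} in case~(iii), and I would stress that the affine case~(iv) requires no exceptions, precisely because ``affine'' and ``almost simple'' are disjoint by socle type, so the finite exceptional family of Lemma~\ref{praeger2} can never contain an affine group. The main obstacle is not any single computation but exactly this reconciliation: ensuring that the ambiguous socle-based cases of Proposition~\ref{rank3class} are sorted disjointly and exhaustively by the product-decomposition criterion, and that the bookkeeping of the finite exceptions stays confined to the almost simple case.
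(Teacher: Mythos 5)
Your proposal is correct and takes essentially the same route as the paper, which offers Proposition~\ref{nonabsoc} merely as a summary of the section: case~(i) from Proposition~\ref{imprimCase}, case~(ii) from Proposition~\ref{prodCase}, case~(iii) from Proposition~\ref{almsimp} (with Lemma~\ref{asprod} giving disjointness), and case~(iv) from Lemma~\ref{praeger2} plus the observation that an affine group cannot be almost simple. Your additional bookkeeping --- sorting the ambiguous abelian-socle groups by the product-decomposition criterion and confining the finite exceptional set to case~(iii) --- is exactly the reconciliation the paper's summary implicitly relies on.
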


\section{Affine case}\label{affcase}

In the previous section we reduced the task of describing 2-closures of rank~3 groups
to the case when the group in question is affine. Recall that a primitive permutation
group \( G \leq \Sym(\Omega) \) is called \textit{affine}, if it has a unique minimal normal subgroup \( V \)
equal to its socle, such that \( V \) is an elementary abelian \( p \)-group for some prime \( p \) and 
\( G = V \rtimes G_0 \) for some \( G_0 < G \). The permutation domain \( \Omega \)
can be identified with \( V \) in such a way that \( V \) acts on it by translations, and \( G_0 \)
acts on it as a subgroup of \( \GL(V) \). Clearly \( G_0 \) is the stabilizer of the zero vector in \( V \)
under such identification.

If \( G_0 \) acts semilinearly on \( V \) as a \( \GF(q) \)-vector space, where \( q \) is a power of~\( p \),
then we write \( G_0 \leq \GamL_m(q) \), where \( \GamL_m(q) \) is the full semilinear group
and \( V \simeq \GF(q)^m \). If the field is clear from the context, we may use \( \GamL(V) = \GamL_m(q) \) instead.
We write \( \AGL_m(q) \) for the full affine semilinear group.

Now we are ready to state the classification of affine rank~3 groups.

\begin{theorem}[\cite{liebeckAffine}]\label{affrankthree}
	Let \( G \) be a finite primitive affine permutation group of rank~\( 3 \)
	and of degree \( n = p^d \), with socle \( V \), where \( V \simeq \GF(p)^d \)
	for some prime~\( p \), and let \( G_0 \) be the stabilizer of the zero vector in \( V \).
	Then \( G_0 \) belongs to one of the following classes.
	\begin{enumerate}
		\item[\textbf{(A)}] Infinite classes. These are:
			\begin{enumerate}[(1)]
				\item \( G_0 \leq \GamL_1(p^d) \);
				\item \( G_0 \) is imprimitive as a linear group;
				\item \( G_0 \) stabilizes the decomposition of \( V \simeq \GF(q)^{2m} \)
					into \( V = V_1 \otimes V_2 \), where \( p^d = q^{2m} \), \( \dim V_1 = 2 \) and \( \dim V_2 = m \);
				\item \( G_0 \unrhd \SL_m(\sqrt{q}) \) and \( p^d = q^m \), where \( 2 \) divides \( \frac{d}{m} \);
				\item \( G_0 \unrhd \SL_2(\sqrt[3]{q}) \) and \( p^d = q^2 \), where \( 3 \) divides \( \frac{d}{2} \);
				\item \( G_0 \unrhd \SU_m(q) \) and \( p^d = q^{2m} \);
				\item \( G_0 \unrhd \Omega_{2m}^{\pm}(q) \) and \( p^d = q^{2m} \);
				\item \( G_0 \unrhd \SL_5(q) \) and \( p^d = q^{10} \);
				\item \( G_0 \unrhd B_3(q) \) and \( p^d = q^8 \);
				\item \( G_0 \unrhd D_5(q) \) and \( p^d = q^{16} \);
				\item \( G_0 \unrhd \operatorname{Sz}(q) \) and \( p^d = q^4 \).
			\end{enumerate}
		\item[\textbf{(B)}] `Extraspecial' classes.
		\item[\textbf{(C)}] `Exceptional' classes.
	\end{enumerate}
	Moreover, classes (B) and (C) constitute only a finite number of groups.
\end{theorem}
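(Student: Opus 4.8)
The plan is to reduce the classification to a problem about linear groups and then apply Aschbacher's subgroup structure theorem together with the classification of finite simple groups. First I would record the basic translation: since \( G = V \rtimes G_0 \) is primitive, \( G_0 \) acts irreducibly on \( V \), and the 2-orbits of \( G \) on \( \Omega \times \Omega \cong V \times V \) are in bijection with the orbits of \( G_0 \) on \( V \). Thus \( G \) has rank~\( 3 \) if and only if \( G_0 \) has exactly two orbits on the nonzero vectors \( V \setminus \{0\} \); writing their sizes as \( k \) and \( \ell \), we have \( k + \ell = p^d - 1 \), and the pair \( (k,\ell) \) carries the parameters of the associated strongly regular graph. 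The whole theorem therefore becomes the assertion: classify the irreducible subgroups \( G_0 \leq \GL_d(p) \) having exactly two orbits on nonzero vectors.

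Next I would apply Aschbacher's theorem to \( G_0 \), placing it (outside the almost simple case) into one of the geometric classes \( \mathcal{C}_2, \ldots, \mathcal{C}_8 \); the class \( \mathcal{C}_1 \) is excluded by irreducibility. Each geometric class is then tested for the two-orbit property. The imprimitive class \( \mathcal{C}_2 \) yields (A)(2); the field-extension class \( \mathcal{C}_3 \) in its extreme form \( \GamL_1(p^d) \) yields (A)(1); the subfield class \( \mathcal{C}_5 \) produces the subfield subgroups \( \SL_m(\sqrt{q}) \) and \( \SL_2(\sqrt[3]{q}) \) of (A)(4)--(5); and the tensor class \( \mathcal{C}_4 \) with a \( 2 \)-dimensional factor gives the bilinear-forms configuration \( V = V_1 \otimes V_2 \), where the orbits are governed by matrix rank and only the ranks \( 1 \) and \( 2 \) occur, producing (A)(3). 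The classical class \( \mathcal{C}_8 \) is handled via Witt's theorem: the isometry group of a form is transitive on vectors of each fixed form-value, so its orbits on nonzero vectors are indexed by those values; adjoining scalars and field automorphisms—using surjectivity of the norm in the unitary case and the identity \( Q(\lambda v) = \lambda^2 Q(v) \) in the orthogonal case—merges the nonzero-value orbits into one and leaves precisely the two orbits of isotropic/singular and anisotropic/non-singular vectors, giving (A)(6) and (A)(7). Finally \( \mathcal{C}_6 \), the normalizer of an extraspecial (symplectic-type) group, forces \( d = r^a \) for a prime \( r \) with the ambient group and field bounded, so only finitely many such configurations admit two orbits; this accounts for the finiteness of class (B).

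The main work, and the principal obstacle, lies in the almost simple case \( \mathcal{S} \), where \( G_0 \) has a normal quasisimple subgroup \( L \) acting absolutely irreducibly modulo scalars. Here one must run through the classification of finite simple groups and, for each \( L \), through its low-dimensional projective representations, deciding which ones carry the two-orbit property. The key leverage is quantitative: the subdegrees satisfy \( k + \ell = p^d - 1 \) together with the strongly-regular-graph identities, and \( |G_0| \) must be large enough to leave only two orbits, so lower bounds on representation degrees (for instance the Landazuri--Seitz bounds) combined with upper bounds on \( |L| \) eliminate all but finitely many candidates outside a few infinite families. Identifying those families explicitly—\( \SL_5(q) \) on the exterior square \( \wedge^2 \) of its natural module (dimension \( 10 \), class (A)(8)); \( B_3(q) = \Omega_7(q) \) on its \( 8 \)-dimensional spin module (A)(9); \( D_5(q) \) on a \( 16 \)-dimensional half-spin module (A)(10); and \( \Sz(q) \) on its \( 4 \)-dimensional module (A)(11)—requires computing the orbit structure of these specific representations, while the residual sporadic and small-rank solutions are collected into the finite exceptional class (C). Establishing completeness, namely that no further quasisimple group escapes the degree and order estimates, is the delicate heart of the argument and the step I expect to consume the bulk of the effort.
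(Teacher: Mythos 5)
You should first be aware that the paper does not prove this statement at all: Theorem~\ref{affrankthree} is imported verbatim from Liebeck's 1987 paper \cite{liebeckAffine} (note the citation in the theorem header), and the present paper only applies it. So the relevant comparison is with Liebeck's own proof, and your outline does mirror its broad architecture: translate rank~3 into ``\( G_0 \leq \GL_d(p) \) is irreducible with exactly two orbits on \( V \setminus \{0\} \)'', split according to the subgroup structure of the ambient linear group (imprimitive, field-extension, tensor, subfield, classical, extraspecial normalizer, quasisimple), and control the quasisimple case by CFSG plus lower bounds on projective representation degrees against the order bound forced by having only two orbits. The infinite families you identify --- \( \SL_5(q) \) on \( \wedge^2 \) of the natural module, the spin module of \( B_3(q) \), the half-spin module of \( D_5(q) \), the \( 4 \)-dimensional \( \Sz(q) \)-module --- are exactly the ones that occur as (A8)--(A11).

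As a proof, however, your proposal has a genuine gap, which you yourself flag: the entire class-\( \mathcal{S} \) analysis --- enumerating the quasisimple groups, their modules of each admissible dimension over each field, and deciding the two-orbit property in every case --- is described but not carried out, and that is where essentially all of the content of Liebeck's roughly forty-page argument lies; a classification theorem whose ``delicate heart'' is deferred is not yet a theorem. Three secondary points are also glossed over. First, the field-extension class \( \mathcal{C}_3 \) is not just its ``extreme form'' \( \GamL_1(p^d) \): when \( G_0 \leq \GamL_a(p^{d/a}) \) with \( 1 < a < d \) one must rerun the whole analysis over the larger field, and it is exactly this recursion that makes classes (A3)--(A11) appear over \( \GF(q) \) with \( p^d = q^{2m}, q^m, \dots \) rather than over the prime field; your outline as written would miss, e.g., case (A4), where \( \SL_m(\sqrt{q}) \) is a subfield subgroup \emph{inside} \( \GamL_m(q) \), not inside \( \GL_d(p) \) directly. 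Second, in the orthogonal case with \( q \) odd, Witt's theorem plus scalars does not fuse the non-singular orbits: scalars multiply the value of the form by squares, so one needs \( G_0 \) to contain a similarity with non-square multiplier, and the two-orbit property is a genuine constraint on which groups between \( \Omega_{2m}^{\pm}(q) \) and the full semisimilarity group qualify --- this is why (A7) is stated with \( G_0 \unrhd \Omega_{2m}^{\pm}(q) \) rather than as an automatic consequence. Third, the tensor-induced class \( \mathcal{C}_7 \) is never addressed, and the finiteness of classes (B) and (C) is asserted rather than derived from the order estimates you invoke.
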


Observe that the only case when a primitive affine rank~3 group can lie in some other class
from the statement of Proposition~\ref{nonabsoc} is when it preserves a nontrivial product decomposition.
This is precisely case (A2) of the classification, so this situation can indeed happen.

Recall that each rank~3 group gives rise to a rank~3 graph.
By~\cite[Table~11.4]{srgw}, groups from case (A) of the theorem correspond to the following series of graphs:
\begin{itemize}
	\item One-dimensional affine graphs (i.e.\ arising from case (A1)).
		These graphs are either Van Lint--Schrijver, Paley or Peisert graphs~\cite{muzychukOneDim};
	\item Hamming graphs. These graphs correspond to linearly imprimitive groups;
	\item Bilinear forms graph \( H_q(2, m) \), where \( m \geq 2 \) and \( q \) is a prime power.
		These graphs correspond to groups fixing a nontrivial tensor decomposition;
	\item Affine polar graph \( \VO_{2m}^{\epsilon}(q) \), where \( m \geq 2 \), \( \epsilon = \pm \) and \( q \) is a prime power;
	\item Alternating forms graph \( A(5, q) \), where \( q \) is a prime power;
	\item Affine half spin graph \( \VD_{5,5}(q) \), where \( q \) is a prime power;
	\item Suzuki-Tits ovoid graph \( \VSz(q) \), where \( q = 2^{2e+1} \), \( e \geq 1 \).
\end{itemize}
The reader is referred to~\cite{srgw} for the construction and basic properties of mentioned graphs.

It should be noted that different cases of Theorem~\ref{affrankthree} may correspond to isomorphic graphs.
Table~\ref{graphtab} lists affine rank~3 groups from case (A) and indicates respective rank~3 graphs.
In Tables~\ref{subtab} and~\ref{a1subtab} we provide degrees and subdegrees of affine rank~3 groups in case~(A).
Relevant tables and some comments on sources of data used are collected in Appendix.

Our first goal is to show that almost all pairs of affine rank~3 graphs can be distinguished based on their subdegrees.
We start with the class~(A1). The following lemma summarizes some of the arithmetical conditions
for the subdegrees of the corresponding groups.
% TODO: remove m1 < m2?
\begin{lemma}\label{a1subdgrdescr}
	Let \( G \) be a primitive affine rank~3 group from class (A1) having degree
	\( n = p^d \), where \( p \) is a prime.
	Denote by \( m_1, m_2 \) the subdegrees of \( G \) and suppose that \( m_1 < m_2 \).
	Then \( m_1 \) divides \( m_2 \) and \( \frac{m_2}{m_1} \) divides \( d \).
\end{lemma}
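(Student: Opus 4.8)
The plan is to translate the problem into a purely arithmetic statement about the orbits of a cyclic affine group on \( \mathbb{Z}/e\mathbb{Z} \), and then to identify the smaller suborbit with the fixed-point set of a suitable power of the generator.

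First I would fix a primitive element \( \omega \) of \( F = \GF(q) \), \( q = p^d \), and identify \( \Omega \) with \( F \) so that \( G = F \rtimes G_0 \) with \( G_0 \leq \GamL_1(q) \). Write \( M = G_0 \cap F^{\times} \) for the multiplicative part; it is cyclic, normal in \( G_0 \), and equal to \( \langle \omega^e \rangle \), where \( e = [F^{\times} : M] \). Since \( \GamL_1(q)/F^{\times} \) is the Galois group, which is cyclic of order \( d \), the quotient \( C = G_0/M \) is cyclic of order \( f \) dividing \( d \). The \( M \)-orbits on \( F^{\times} \) are exactly the \( e \) cosets of \( M \), each of size \( (q-1)/e \); as \( M \trianglelefteq G_0 \), the group \( C \) acts on the set of these \( e \) cosets, and the \( G_0 \)-orbits on \( F^{\times} \) are the unions of the cosets lying in a single \( C \)-orbit.

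Identifying the cosets with \( \mathbb{Z}/e\mathbb{Z} \) via the exponent of \( \omega \) modulo \( e \), a generator of \( C \) (lifting the Frobenius \( x \mapsto x^{p^{d/f}} \) composed with a scalar \( \omega^c \)) acts by the affine map \( \sigma\colon x \mapsto \beta x + \gamma \), where \( \beta = p^{d/f} \bmod e \) is a unit modulo \( e \) (as \( \gcd(e, p) = 1 \)) and \( \gamma = c \bmod e \). Because \( G \) has rank \( 3 \), the group \( C \) has exactly two orbits \( O_1, O_2 \) on \( \mathbb{Z}/e\mathbb{Z} \), of sizes \( a \leq b \) with \( a + b = e \), and the subdegrees are \( m_1 = a(q-1)/e \) and \( m_2 = b(q-1)/e \). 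Thus the whole claim reduces to showing \( a \mid b \) and \( b/a \mid d \), where we may assume \( a < b \) since \( m_1 < m_2 \).

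The decisive step, which I expect to be the crux, is to realise \( O_1 \) as a fixed-point set. Since \( \langle \sigma \rangle \) is cyclic, its orbit lengths are its cycle lengths, so \( t := \operatorname{ord}(\sigma) = \operatorname{lcm}(a, b) \), and \( t \mid f \mid d \). As \( b > a \), the map \( \sigma^a \) is not the identity; every point of \( O_1 \) is fixed by \( \sigma^a \), whereas no point of \( O_2 \) is, because its orbit length \( b \) does not divide \( a \). Having only two orbits, we conclude \( \operatorname{Fix}(\sigma^a) = O_1 \) exactly. On the other hand \( \operatorname{Fix}(\sigma^a) \) is the solution set of \( (\beta^a - 1)x \equiv -\delta \pmod{e} \) for the appropriate constant \( \delta \); being nonempty, it is a coset of \( \{ x : (\beta^a - 1)x \equiv 0 \pmod{e} \} \), whose cardinality is \( \gcd(\beta^a - 1, e) \). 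Hence \( a = |O_1| = \gcd(\beta^a - 1, e) \) divides \( e \), and therefore \( a \mid (e - a) = b \), which gives \( m_1 \mid m_2 \). Finally, \( a \mid b \) forces \( t = \operatorname{lcm}(a, b) = b \), so \( b \mid d \); since \( b/a \) divides \( b \), we conclude that \( b/a = m_2/m_1 \) divides \( d \), as required.
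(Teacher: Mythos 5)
Your proof is correct, but it takes a genuinely different route from the paper: the paper gives no argument of its own, establishing both claims by citation to Foulser--Kallaher (their Proposition~3.3 for \( m_1 \mid m_2 \), their Theorem~3.7(4) for \( \tfrac{m_2}{m_1} \mid d \)). Your argument is a sound, self-contained replacement. Every step checks out: \( M = G_0 \cap F^{\times} \) is normal in \( G_0 \) with cyclic quotient \( C \) of order \( f \mid d \) because \( C \) embeds in the Galois group; the rank~3 hypothesis says \( C \) has exactly two orbits on the \( e \) cosets of \( M \), identified with \( \mathbb{Z}/e\mathbb{Z} \), on which a generator acts as the affine map \( \sigma : x \mapsto \beta x + \gamma \) with \( \beta \) a unit (as \( \gcd(e,p)=1 \)); since \( a < b \), the fixed-point set of \( \sigma^a \) is exactly \( O_1 \); a nonempty fixed-point set of an affine map on \( \mathbb{Z}/e\mathbb{Z} \) is a coset of the kernel of multiplication by \( \beta^a - 1 \), so \( a = \gcd(\beta^a - 1, e) \) divides \( e = a + b \), giving \( a \mid b \); and then \( \operatorname{lcm}(a,b) = b \) equals the permutation order of \( \sigma \), which divides \( f \mid d \) (note you only need \( t \mid f \), so faithfulness of the \( C \)-action is irrelevant), whence \( b/a \mid b \mid d \). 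What the citation buys the paper is brevity and access to Foulser--Kallaher's fuller structure theory for rank~3 subgroups of \( \AGL_1(q) \); what your argument buys is a transparent reason for the divisibilities --- the small suborbit is a coset in \( \mathbb{Z}/e\mathbb{Z} \), and cosets have size dividing \( e \) --- packaged in a form short enough that it could replace the external reference entirely.
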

\begin{proof}
	See \cite[Proposition~3.3]{foulserRank3} for the first claim and
	\cite[Theorem~3.7, (4)]{foulserRank3} for the second.
\end{proof}

The following lemmas apply conditions from Lemma~\ref{a1subdgrdescr} to groups from classes (B), (C) and (A).
\begin{lemma}\label{a1inb}
	Let \( G \) be a primitive affine rank~3 group from class (B).
	Suppose that \( G \) has the same subdegrees as a group from class (A1).
	Then the degree and subdgrees of \( G \) are one of the following:
	\( (7^2, 24, 24) \), \( (17^2, 96, 192) \), \( (23^2, 264, 264) \),
	\( (3^6, 104, 624) \), \( (47^2, 1104, 1104) \), \( (3^4, 16, 64) \), \( (7^4, 480, 1920) \).
\end{lemma}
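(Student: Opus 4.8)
The plan is to turn the statement into a finite arithmetic check, using the fact recorded in Theorem~\ref{affrankthree} that class~(B) consists of only finitely many groups. The degrees \( n = p^d \) and the pairs of nontrivial subdegrees \( (m_1, m_2) \), \( m_1 \le m_2 \), of all these extraspecial-type rank~3 groups are completely tabulated; I would read them off Table~\ref{subtab} of the Appendix (which in turn rests on the classification in~\cite{liebeckAffine}). Thus at the outset I have an explicit finite list of triples \( (p^d, m_1, m_2) \) ranging over every rank~3 group in class~(B).

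I would then apply the necessary conditions of Lemma~\ref{a1subdgrdescr}. If a class~(B) group shares its subdegrees with some group in class~(A1), then those subdegrees must obey the arithmetic constraints of that lemma. When \( m_1 < m_2 \) these read \( m_1 \mid m_2 \) and \( (m_2/m_1) \mid d \); when \( m_1 = m_2 \) the lemma imposes nothing. Accordingly I would split the finite list into two parts. For the triples with distinct subdegrees I test both divisibilities and discard every triple failing either one; the survivors are \( (17^2, 96, 192) \), \( (3^6, 104, 624) \), \( (3^4, 16, 64) \) and \( (7^4, 480, 1920) \), as one checks from \( 192/96 = 2 \mid 2 \), \( 624/104 = 6 \mid 6 \), \( 64/16 = 4 \mid 4 \) and \( 1920/480 = 4 \mid 4 \). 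For the triples with a repeated subdegree \( m_1 = m_2 = (p^d - 1)/2 \) no constraint is available, so all of them must be retained as candidates; these are \( (7^2, 24, 24) \), \( (23^2, 264, 264) \) and \( (47^2, 1104, 1104) \). Collecting both parts yields exactly the seven triples in the statement.

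The only delicate ingredient is the input data: since the whole argument is a sieve applied to the list of class~(B) subdegrees, I must be certain that this list is both correct and exhaustive. I would therefore devote most of the write-up to justifying the relevant rows of Table~\ref{subtab}, tracing each extraspecial-normalizer rank~3 group and its two subdegrees back to~\cite{liebeckAffine}, and only afterwards record the divisibility test, which is short and entirely routine. I do not expect any conceptual obstacle beyond the bookkeeping needed to handle the full finite list accurately, together with the observation that the equal-subdegree rows cannot be excluded by this method and so are automatically kept as candidates.
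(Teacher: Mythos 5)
Your proposal is correct and takes essentially the same approach as the paper: the paper likewise sieves the tabulated class~(B) degree/subdegree triples through the divisibility conditions of Lemma~\ref{a1subdgrdescr} (with equal-subdegree rows surviving automatically), the only slip in your write-up being that these data live in Table~\ref{casebtab}, not Table~\ref{subtab}, which lists class~(A). The paper does not re-derive the table, simply citing its sources (\cite{foulserLowRank} and \cite{liebeckAffine}) in the Appendix, so your planned verification of the input list is extra work beyond what the paper records.
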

\begin{proof}
	Let \( n \) denote the degree of \( G \), and let \( m_1 \leq m_2 \) be the subdegrees.
	In Table~\ref{casebtab} all possible subdegrees of groups from class (B) are listed.
	We apply Lemma~\ref{a1subdgrdescr}. For instance, if \( n = 29^2 \) then \( m_1 = 168 \),
	\( m_2 = 672 \). The quotient \( \frac{m_2}{m_1} = 4 \) does not divide \( 2 \), hence
	this case cannot happen. Other cases are treated in the same manner.
\end{proof}

\begin{lemma}\label{a1inc}
	Let \( G \) be a primitive affine rank~3 group from class (C).
	Suppose that \( G \) has the same subdegrees as a group from class (A1).
	Then the degree and subdegrees of \( G \) are one of the following:
	\( (3^4, 40, 40) \), \( (2^{12}, 315, 3780) \), \( (89^2, 2640, 5280) \).
\end{lemma}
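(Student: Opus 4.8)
The plan is to mirror the argument used for class (B) in Lemma~\ref{a1inb}. Since classes (B) and (C) together constitute only finitely many groups by Theorem~\ref{affrankthree}, the degrees and subdegrees of all groups in class (C) are available as an explicit finite list in the Appendix; denote the corresponding table analogously to Table~\ref{casebtab} used for class (B). Write \( n = p^d \) for the degree and \( m_1 \leq m_2 \) for the two subdegrees of a given group \( G \) from this list.

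The core step is to apply the arithmetical constraints of Lemma~\ref{a1subdgrdescr}, which any group sharing its subdegrees with a class (A1) group must satisfy. Concretely, whenever \( m_1 < m_2 \), one checks whether \( m_1 \) divides \( m_2 \) and whether \( \frac{m_2}{m_1} \) divides \( d \); if either divisibility fails, then \( G \) cannot have the subdegrees of a one-dimensional rank~3 group and is discarded. I would run through the finite list case by case exactly as in the proof of Lemma~\ref{a1inb}. For example, the surviving entry \( (2^{12}, 315, 3780) \) gives \( \frac{m_2}{m_1} = 12 \), which divides \( d = 12 \), so the test is passed; similarly \( (89^2, 2640, 5280) \) gives \( \frac{m_2}{m_1} = 2 \) dividing \( d = 2 \).

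A separate remark is needed for entries with equal subdegrees \( m_1 = m_2 \), such as \( (3^4, 40, 40) \): here the hypothesis \( m_1 < m_2 \) of Lemma~\ref{a1subdgrdescr} is vacuous, so the divisibility test provides no obstruction and such a case automatically remains on the list. After eliminating every entry that violates the divisibility conditions and retaining those with equal subdegrees, the three tuples stated in the lemma are precisely what survives.

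The main obstacle is not conceptual but bookkeeping: one must be certain that the table of subdegrees for class (C) is complete and correct, since the whole argument reduces to a finite arithmetic check against that data. Assembling those subdegrees from the classification of the `exceptional' affine rank~3 groups, rather than the elementary divisibility arithmetic itself, is where the real effort lies.
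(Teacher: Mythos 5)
Your proposal is correct and follows essentially the same route as the paper: the paper's proof of this lemma is exactly to apply the divisibility constraints of Lemma~\ref{a1subdgrdescr} to the finite list of class~(C) degrees and subdegrees in Table~\ref{casectab}, discarding every entry where \( m_1 \nmid m_2 \) or \( \frac{m_2}{m_1} \nmid d \), and retaining the equal-subdegree entry \( (3^4, 40, 40) \) for which the lemma gives no obstruction. Your case checks (ratio \( 12 \mid 12 \) for \( 2^{12} \), ratio \( 2 \mid 2 \) for \( 89^2 \)) and your observation that the burden lies in the completeness of the tabulated class~(C) data match the paper's treatment.
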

\begin{proof}
	Follows from Lemma~\ref{a1subdgrdescr} and Table~\ref{casectab}.
\end{proof}

\begin{lemma}\label{a1subdgr}
	Let \( G \) be a primitive affine rank~3 group from class (A) and
	suppose that \( G \) has the same subdegrees as a group from class (A1).
	Then either \( G \) lies in (A1) or degree and subdegrees of \( G \) are one of the following:
	\( (3^2, 4, 4) \), \( (3^4, 16, 64) \), \( (3^6, 104, 624) \),
	\( (2^4, 5, 10) \), \( (2^6, 21, 42) \), \( (2^8, 51, 204) \), \((2^{10}, 93, 930)\),\\ \((2^{12}, 315, 3780)\), \( (2^{16}, 3855, 61680) \),
	\( (5^2, 8, 16) \).
\end{lemma}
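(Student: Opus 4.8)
The goal is to show that, among affine rank~3 groups in class (A), only those in (A1) share subdegrees with a group of class (A1), except for a short explicit list. The natural strategy is to run the arithmetical sieve of Lemma~\ref{a1subdgrdescr} across every remaining infinite class of Theorem~\ref{affrankthree}, namely (A2)--(A11), using the tabulated subdegrees collected in the Appendix (Tables~\ref{subtab} and~\ref{a1subtab}). For each such class I would extract the two subdegrees \( m_1 < m_2 \) as explicit functions of the relevant parameters \( q \), \( m \), \( e \), etc., and the degree \( n = p^d \) with its exponent \( d \), and then test the two divisibility constraints: first that \( m_1 \mid m_2 \), and second that \( \frac{m_2}{m_1} \mid d \). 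Whenever both conditions can fail for all sufficiently large parameters, the class is eliminated outright; whenever they can hold only for finitely many small parameter values, those values must be computed, yielding the finite list in the statement.

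The classes (B) and (C) are already handled: Lemmas~\ref{a1inb} and~\ref{a1inc} list precisely the surviving degrees and subdegrees, so I would simply import those tuples. What remains is to carry out the same sieve for (A2)--(A11). For the generic-parameter classes---the bilinear forms, orthogonal/polar, unitary, \( \SL \)-type, \( B_3 \), \( D_5 \) and Suzuki-Tits families---the subdegrees are polynomials in \( q \) of fairly high degree, while \( d \) grows only linearly in the parameters (e.g. \( d = 2m \log_p q \)-type expressions). Thus the quotient \( \frac{m_2}{m_1} \), being itself a growing polynomial in \( q \), will eventually exceed \( d \) and hence cannot divide it; this forces each such family to contribute at most finitely many exceptions, all with small \( q \). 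The linearly imprimitive class (A2), which produces Hamming-type graphs \( H(2,\cdot) \), must be treated with its own subdegree formula but yields to the same reasoning. Collecting the small surviving cases across all classes produces exactly the tuples displayed, e.g. \( (3^4,16,64) \) and \( (2^6,21,42) \).

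The main obstacle is bookkeeping rather than conceptual depth: one must ensure that no parameter value is overlooked, particularly the small-\( q \) and small-\( m \) boundary cases where a family's subdegree formula may coincide with an (A1) subdegree, and where several distinct classes can produce the same degree \( n \). Care is needed because a single tuple may arise from more than one class, and because the ranges of validity in Theorem~\ref{affrankthree} (e.g. \( m \geq 2 \), \( m \geq 3 \), divisibility conditions on \( \frac{d}{m} \)) must be respected so as not to admit spurious solutions. I would therefore organize the verification class by class, in the order (A2)--(A11), recording for each the finite set of parameter values passing both divisibility tests, then take the union and cross-check against the degenerate overlaps (such as \( (3^2,4,4) \) arising as a rank-degenerate instance) to arrive at the stated list. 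The ``sufficiently large degree'' hypothesis implicit in the surrounding results guarantees that the asymptotic eliminations are valid, so that only the enumerated small exceptions survive.
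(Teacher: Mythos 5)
Your overall strategy---push the two divisibility conditions of Lemma~\ref{a1subdgrdescr} through classes (A2)--(A11) using the subdegree formulae of Table~\ref{subtab}, eliminate large parameters by growth, and enumerate the small survivors---is exactly the paper's argument. But there is a genuine logical gap at the start: Lemma~\ref{a1subdgrdescr} is stated only for \( m_1 < m_2 \), and the class (A1) groups you are matching against include the Paley and Peisert graphs, whose subdegrees are \emph{equal}. If a group from (A2)--(A11) had equal subdegrees, your sieve would be vacuous against it (the ratio is \( 1 \), which divides everything), and your asymptotic elimination ``the quotient is a growing polynomial in \( q \), so it eventually exceeds \( d \)'' silently assumes this never happens. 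The paper closes this hole as its first step: in (A3)--(A11) exactly one of the two subdegrees is divisible by \( p \), so they are never equal; and in (A2) they are equal if and only if \( p^m = 3 \), i.e.\ \( n = 9 \), which is precisely where the exception \( (3^2, 4, 4) \) comes from. Your proposal treats \( (3^2,4,4) \) as a ``degenerate overlap'' to be cross-checked at the end, but without the \( p \)-divisibility observation you have no argument that this is the only equal-subdegree coincidence.

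A second problem is one of scope: the lemma concerns class (A) only, so importing the tuples of Lemmas~\ref{a1inb} and~\ref{a1inc} is not merely unnecessary---it would produce a list different from the one claimed (e.g.\ \( (7^2, 24, 24) \), \( (47^2, 1104, 1104) \), \( (3^4, 40, 40) \), \( (89^2, 2640, 5280) \) appear in those lemmas but not in this statement). Classes (B) and (C) are handled in separate lemmas and are only merged with this one later, in the proof of Theorem~\ref{a1inv}. Finally, a point of execution: for classes (A8)--(A10) the elimination is not asymptotic at all; the quotient \( m_2/m_1 \) fails to be an integer for \emph{every} \( q \) (e.g.\ for (A8) integrality would force \( q^2+1 \) to divide \( q+1 \), by coprimality of \( q^2 \) and \( q^2+1 \)), so the first divisibility test kills these classes outright, and in (A6), (A7) the same coprimality argument is what pins \( m = 2 \) before any growth bound enters. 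Your sieve would uncover this if executed honestly, since you do list \( m_1 \mid m_2 \) as the first test, but the growth heuristic you lean on is not the operative mechanism in those classes.
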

\begin{proof}
	Suppose that \( G \) does not lie in class (A1), but shares subdegrees with some group from (A1).
	Notice that in cases (A3) through (A11), exactly one of the subdegrees is divisible by \( p \), so the subdegrees
	are not equal. In case (A2) subdegrees are the same if and only if \( p^m = 3 \), and consequentially \( n = 9 \).
	This situation is the first example in our list of parameters, hence from now on we may assume that \( n > 9 \),
	and subdegrees of \( G \) are not equal.

	Let \( m_1 \) and \( m_2 \) denote the subdegrees of \( G \), where, as shown earlier, we may assume \( m_1 < m_2 \).
	Since \( m_1 \) and \( m_1 \) are subdegrees of some group from the class (A1), Lemma~\ref{a1subdgrdescr}
	yields that \( m_1 \) divides \( m_2 \) and the number \( u = \frac{m_2}{m_1} \) divides~\( d \), where \( n = p^d \).

	Now, since \( G \) belongs to one of the classes (A2)--(A11), we apply the above
	arithmetical conditions in each case. We consider some classes together, since
	they give rise to isomorphic rank~3 graphs and hence have the same formulae for subdegrees.
	The reader is referred to Table~\ref{subtab} for the list of subdegrees in question.

	\begin{itemize}
		\item[\textbf{(A2)}] \( u = \frac{p^m-1}{2} \) and since \( u \) divides \( d = 2m \),
			we have \( p^m - 1 \leq 4m \). It follows that \( (n, m_1, m_2) \) is one of
			\( (3^2, 4, 4) \), \( (3^4, 16, 64) \) or \( (5^2, 8, 16) \).
		\item[\textbf{(A3)--(A5)}] We write \( r \) for the highest power of \( p \) dividing \( m_2 \),
			so the second subdegree is equal to \( r(r^m-1)(r^{m-1}-1) \) for some \( m \geq 2 \).
			
			We have \( u = r\frac{r^{m-1}-1}{r+1} \) and hence \( u \geq \frac{r^{m-1}-1}{2} \).
			Now \( r^{2m} = p^d \geq p^{\frac{r^{m-1}-1}{2}} \). Using inequalities \( m \geq 2 \) and \( p \geq 2 \),
			we obtain \( 2r^{8(m-1)} \geq 2^{r^{m-1}} \). Therefore \( r^{m-1} \leq 44 \) and an exhaustive search
			yields the following possibilities for \( n \), \( m_1 \), \( m_2 \):
			\( (2^6, 21, 42) \), \((2^{10}, 93, 930) \), \( (2^{12}, 315, 3780) \), \( (3^6, 104, 624) \).
		\item[\textbf{(A6), (A7)}] \( u = q^{m-1}\frac{q-1}{q^{m-1} \pm 1} \). Numbers \( q^{m-1} \)
			and \( q^{m-1} \pm 1 \) are coprime, so \( q^{m-1} \pm 1 \) divides \( q-1 \).
			That is possible only when \( m = 2 \), so we have \( u = q \). Now \( 2^q \leq p^q \leq p^d = q^4 \),
			so \( q \leq 16 \). Hence we have the following possibilities for \( n \), \( m_1 \), \( m_2 \) in this case:
			\( (2^4, 5, 10) \), \( (2^8, 51, 204) \), \( (2^{16}, 3855, 61680) \).
		\item[\textbf{(A8)}] \( u = q^3 - q^2 \frac{q+1}{q^2+1} \). Since \( q^2 \) and \( q^2+1 \)
			are coprime, \( q^2 + 1 \) must divide \( q+1 \). This can not happen, so this case does not occur.
		\item[\textbf{(A9)}] \( u = q^3 \frac{q-1}{q^3+1} \). Since \( q^3 + 1 \) does not divide \( q-1 \),
			this case does not occur.
		\item[\textbf{(A10)}] \( u = q^5 - q^3 \frac{q^2+1}{q^3+1} \). Since \( q^3+1 \) does not divide \( q^2+1 \),
			this case does not occur.
		\item[\textbf{(A11)}] \( u = q \) and \( p^d = q^4 \). Hence we obtain the same
			possible parameters as in cases (A6), (A7).
	\end{itemize}

	In all cases considered we either got a contradiction or got one of the possible exceptions recorded in the
	statement. The claim is proved.
\end{proof}

As an immediate corollary we derive that 2-closures of primitive rank~3 subgroups
of \( \AGL_1(q) \) also lie in \( \AGL_1(q) \) (Theorem~\ref{a1inv}), apart from a finite number of exceptions.
\medskip

\noindent
\emph{Proof of Theorem~\ref{a1inv}.}
	Suppose that \( G \) and \( G^{(2)} \) have different socles.
	Since \( G \) is not almost simple, Lemma~\ref{praeger2} implies that
	\( G^{(2)} \) and thus \( G \) must preserve a nontrivial product decomposition.
	In that situation \( G \) has subdegrees of the form \( 2(\sqrt{n}-1) \), \( (\sqrt{n}-1)^2 \),
	in particular, \( G \) has subdegrees as a group from class (A2) and hence parameters of \( G \)
	are listed in Lemma~\ref{a1subdgr}.
	We may assume that \( G \) does not preserve a nontrivial product decomposition and
	so \( G \) and \( G^{(2)} \) have equal socles.
	The claim now follows from Theorem~\ref{affrankthree} and Lemmas~\ref{a1inb}--\ref{a1subdgr}.\qed
\medskip

Note that Lemmas~\ref{a1inb}--\ref{a1subdgr} list degrees and subdegrees of possible exceptions to Theorem~\ref{a1inv};
in Table~\ref{a1exctab} of Appendix we collect these data in one place.

\medskip

Now we move on to establish a partial analogue of Lemma~\ref{a1subdgr} for classes (A2)--(A11).
First we need to recall some notions related to quadratic and bilinear forms.

Let \( V \) be a vector space over a field \( F \). Given a symmetric bilinear form \( f : V \times V \to F \),
the radical of \( f \) is \( \rad(f) = \{ x \in V \mid f(x, y) = 0 \text{ for all } y\} \);
we say that \( f \) is \emph{non-singular}, if \( \rad(f) = 0 \).
If \( \kappa : V \times V \to F \) is a quadratic form with an associated bilinear form \( f \), then
the radical of \( \kappa \) is \( \rad(\kappa) = \rad(f) \cap \{ x \in V \mid \kappa(x) = 0 \} \).
We say that \( \kappa \) is \emph{non-singular}, if \( \rad(\kappa) = 0 \), and we say that
\( \kappa \) is \emph{non-degenerate}, if \( \rad(f) = 0 \).

If \( F \) has odd characteristic, then \( \rad(\kappa) = \rad(f) \).
If \( F \) has even characteristic and \( \kappa \) is non-singular, then the dimension of \( \rad(f) \)
is at most one, \( f \) induces a non-singular alternating form on \( V/\rad(f) \) and,
hence, the dimension of \( V/\rad(f) \) is even (see~\cite[Section~3.4.7]{wilson}).
Therefore if the dimension of \( V \) is even, then the notions of non-singular and non-degenerate quadratic forms
coincide regardless of the characteristic.

Now we can describe the construction of the affine polar graph \( \VO_{2m}^{\epsilon}(q) \), \( m \geq 2 \).
Let \( V \) be a \( 2m \)-dimensional vector space over \( \GF(q) \), and let \( \kappa : V \to \GF(q) \)
be a non-singular quadratic form of type \( \epsilon \). Vertices of the graph \( \VO_{2m}^{\epsilon}(q) \)
are identified with vectors from \( V \), and two distinct vertices \( u, v \in V \) are joined by an edge
if \( \kappa(u - v) = 0 \).

Allowing some abuse of terminology, we say that subdegrees of a rank~3 graph are simply subdegrees
of the respective rank~3 group.

\begin{proposition}\label{subdinter}
	If two affine rank~3 graphs of sufficiently large degree have the same subdegrees,
	then they are isomorphic apart from the following exceptions:
	\( \VSz(q) \) and \( \VO_4^-(q) \) for \( q = 2^{2e+1} \), \( e \geq 1 \),
	or Paley and Peisert graphs. In particular, graphs \( H_q(2, 2) \) and \( \VO_4^+(q) \) are isomorphic.
\end{proposition}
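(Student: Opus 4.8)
The plan is to compare the subdegree formulae of the seven infinite series of affine rank~3 graphs using the explicit data of Table~\ref{subtab}. Observe first that two affine rank~3 graphs of equal degree \( n \) have the same subdegrees exactly when they share the smaller valence \( k \), since the two subdegrees are then forced to be \( k \) and \( n-1-k \); so throughout I compare the pairs \( (n,k) \). By the correspondence of \cite[Table~11.4]{srgw} together with Theorem~\ref{affrankthree}, every affine rank~3 graph of sufficiently large degree is one-dimensional, or a Hamming, bilinear forms \( H_q(2,m) \), affine polar \( \VO_{2m}^{\epsilon}(q) \), alternating forms \( A(5,q) \), affine half spin \( \VD_{5,5}(q) \), or Suzuki--Tits ovoid \( \VSz(q) \) graph; the remaining classes of Theorem~\ref{affrankthree} either yield graphs already on this list or (the classes (B), (C)) constitute finitely many groups, hence do not occur for large degree. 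Within each series the valence is a strictly increasing function of the field size for fixed dimension and, for large \( q \), separates the dimension \( m \) and, in the orthogonal case, the type \( \epsilon \); thus equal subdegrees inside a single series force equal parameters and hence isomorphic graphs, and it remains to treat pairs drawn from distinct series.

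First I would eliminate every comparison that involves a one-dimensional graph. If a graph from one of the classes (A2)--(A11) shares its subdegrees with a one-dimensional graph from class (A1), then by Lemma~\ref{a1subdgr} its degree lies in an explicit finite list, so for sufficiently large degree this cannot happen. Two one-dimensional graphs with the same subdegrees are handled by the classification of one-dimensional affine rank~3 graphs in~\cite{muzychukOneDim}: the only coincidence of parameters between non-isomorphic graphs in that family is that of the Paley and Peisert graphs, which both carry the conference parameters \( k = (n-1)/2 \). This produces the Paley/Peisert exception and removes the one-dimensional series from all further consideration.

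For the remaining six series I would run through the finitely many unordered pairs, equating degrees and valences. Since every degree has the shape \( p^d \), an equality \( q_1^{2m_1} = q_2^{2m_2} \) forces both field sizes to be powers of the common prime \( p \); substituting into the equation \( k_1 = k_2 \) yields a Diophantine equation in \( q \) and \( m \) between products of cyclotomic-type factors, which I would bound by the growth estimates already used in the proof of Lemma~\ref{a1subdgr} (comparing the \( p \)-adic size of each valence with \( d \), noting that in each of these series exactly one subdegree is coprime to \( p \)). In every pair this shows that a solution forces the degree to be bounded, except for two genuine coincidences at degree \( q^4 \). The first is the pair \( H_q(2,2) \) and \( \VO_4^+(q) \), both of valence \( (q+1)(q^2-1) \): here the determinant is a quadratic form of \( + \)-type on the space of \( 2\times 2 \) matrices over \( \GF(q) \), and a nonzero matrix is singular if and only if it has rank one, so the rank-one adjacency of \( H_q(2,2) \) coincides with the singular-difference adjacency of \( \VO_4^+(q) \); the two graphs are therefore isomorphic, which is the ``in particular'' assertion. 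The second is the pair \( \VSz(q) \) and \( \VO_4^-(q) \), both of valence \( (q-1)(q^2+1) \) at degree \( q^4 \); these must be kept as exceptions. Concretely, the orthogonal valence \( (q^{m-1}+\epsilon)(q^m-\epsilon) \) has to be separated from the bilinear valence \( (q+1)(q^m-1) \) (equal only when \( m=2,\ \epsilon=+ \)) and from the Suzuki valence \( (q-1)(q^2+1) \) (equal only when \( m=2,\ \epsilon=- \)), and these two coincidences are the content of the analysis.

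Finally I would confirm that the two exceptional families are genuinely non-isomorphic, so that the exceptions in the statement are necessary. For \( \VSz(q) \) and \( \VO_4^-(q) \) this follows from the fact that the \( q^2+1 \) singular points of \( \VO_4^-(q) \) form an elliptic quadric, on which \( \Omega_4^-(q)\cong\PSL_2(q^2) \) acts, whereas \( \VSz(q) \) is built from the Suzuki--Tits ovoid, which for \( q = 2^{2e+1} \) is not projectively equivalent to the elliptic quadric (equivalently, \( \Sz(q) \) is not contained in the orthogonal group); hence the graphs cannot be isomorphic. The non-isomorphism of the Paley and Peisert graphs is known (Peisert). The main obstacle is the third paragraph: the pairwise Diophantine analysis is lengthy, and for each pair one must combine the divisibility information with a careful comparison of the \( p \)-adic valuations and the sizes of the valences in order to force the degree to be bounded, the orthogonal series being the most delicate because of its two near-coincidences with the bilinear and the Suzuki series.
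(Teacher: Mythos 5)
Your proposal is correct and follows essentially the same route as the paper: eliminate the one-dimensional class via Lemma~\ref{a1subdgr} and the classification in~\cite{muzychukOneDim} (yielding the Paley/Peisert exception), compare the remaining series pairwise by equating degrees and the \( p \)-parts of the subdegrees (this is exactly the paper's Table~\ref{casestab}), and identify \( H_q(2,2) \) with \( \VO_4^+(q) \) via the determinant quadratic form. Two harmless remarks: your parenthetical claim that each of the six series has exactly one subdegree coprime to \( p \) fails for Hamming graphs with \( p \) odd (both subdegrees are then coprime to \( p \)), but this only makes that separation easier and is precisely how the paper treats class (A2); and the non-isomorphism of \( \VSz(q) \) and \( \VO_4^-(q) \), which you sketch via inequivalence of ovoids, is not actually needed for the proposition as stated and is proved in the paper only afterwards (Lemma~\ref{szvo}), where it requires first knowing \( \Aut(\VO_4^-(q)) \) from Proposition~\ref{affpolaut}, since an abstract graph isomorphism need not a priori be a projective map.
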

\begin{proof}
	By Lemma~\ref{a1subdgr}, if one of the graphs in question arises from the case (A1), then the second
	graph also comes from (A1). By Table~\ref{a1subtab}, Van Lint-Schrijver graph has unequal subdegrees,
	while Paley and Peisert graphs have equal subdegrees, hence in this case graphs are either isomorphic
	or it is a Paley graph and a Peisert graph. We may now assume that our graphs are not one-dimensional.
	
	Notice that given \( n = p^d \) for \( p \) prime, the largest subdegree of graphs from classes
	(A3)--(A11) is divisible by \( p \), while this is not the case in class (A2) (we may assume that
	\( n > 9 \), subdegrees are not equal in this case). This settles the claim for class (A2).

	We compare subdegrees of other classes and collect the relevant information in Table~\ref{casestab}. Let us explain the procedure
	in the case \( H_q(2, m) \) vs. \( \VO^{\pm}_{2\overline{m}}(\overline{q}) \) only, since other cases are treated
	similarly.

	Consider the graph \( H_q(2, m) \). The number of its vertices is equal to \( n = q^{2m} \)
	and the second subdegree is equal to \( q(q^m - 1)(q^{m-1} - 1) \). Recall that \( n = p^d \)
	for some prime \( p \), and the largest power of \( p \) dividing the second subdegree is~\( q \).
	In the case of the graph \( \VO^{\epsilon}_{2\overline{m}}(\overline{q}) \), we have \( n = \overline{q}^{2\overline{m}} \)
	and the largest power of \( p \) dividing the second subdegree is \( \overline{q}^{\overline{m}-1} \).
	We obtain a system of equations
	\[ q^{2m} = \overline{q}^{2\overline{m}}, \, q = \overline{q}^{\overline{m}-1},\]
	which is written in the relevant cell of Table~\ref{casestab}. We derive that \( m = \frac{\overline{m}}{\overline{m}-1} \),
	and hence \( m = \overline{m} = 2 \), \( q = \overline{q} \).
	Now, the second subdegree for \( \VO^{\epsilon}_4(q) \) is \( q(q-1)(q^2+(-1)^\epsilon) \).
	Therefore \( \epsilon = + \), which gives us the first example of affine rank~3 graphs with same subdegrees.
	Other cases are dealt with in the same way.

	Now, Table~\ref{casestab} lists two cases when graphs from different classes have the same subdegrees,
	namely, \( H_q(2, 2) \), \( \VO_4^+(q) \) and \( \VSz(q) \), \( \VO_4^-(q) \).
	To finish the proof of the proposition, we show that graphs \( H_q(2, 2) \) and \( \VO_4^+(q) \) are in fact isomorphic.

	Identify vertices of \( H_q(2, 2) \) with \( 2 \times 2 \) matrices over \( \GF(q) \),
	and recall that two vertices are connected by an edge if the rank of their difference is~1.
	A nonzero \( 2 \times 2 \) matrix has rank~1 precisely when its determinant is zero:
	\[
		\operatorname{rk}\begin{pmatrix}u_1 & u_3\\u_4 & u_2\end{pmatrix} = 1 \iff u_1u_2 - u_3u_4 = 0.
	\]
	It can be easily seen that \( u_1u_2 - u_3u_4 \) is a non-degenerate quadratic form on~\( \GF(q)^4 \),
	so \( H_q(2, 2) \) is isomorphic to the affine polar graph \( \VO_4^\epsilon(q) \). By comparing subdegrees
	we derive that \( \epsilon = + \), and we are done.
\end{proof}

It should be noted that \( \VSz(q) \) and \( \VO_4^-(q) \) in fact have the
same parameters as strongly regular graphs (see~\cite[Table~24]{buekenhout}).
In Lemma~\ref{szvo} we will see that these graphs are actually
not isomorphic since they have non-isomorphic automorphism groups.

Paley and Peisert graphs are generally not isomorphic (see~\cite{peisert}), but have the same
parameters since they are strongly regular and self-complementary (i.e.\ isomorphic to their complements).
\medskip

Recall that in order to describe 2-closures of rank~3 groups it suffices to find full automorphism
groups of corresponding rank~3 graphs. Hamming graphs were dealt with in Proposition~\ref{prodCase},
and graphs arising in the case (A1) were covered in Theorem~\ref{a1inv}. We are left with five cases:
bilinear forms graph, affine polar graph, alternating forms graph, affine half spin graph and
the Suzuki-Tits ovoid graph. In most of these cases the full automorphism group was described
earlier in some form, and we state relevant results here.

For two groups \( G_1 \) and \( G_2 \) let \( G_1 \circ G_2 \) denote their central product.
Note that the central product \( \GL(U) \circ \GL(W) \) has a natural action on the
tensor product \( U \otimes W \).

\begin{proposition}[{\cite[Theorem~9.5.1]{brouwerDRG}}]\label{bilaut}
	Let \( q \) be a prime power and \( m \geq 2 \). Set \( G = \Aut(H_q(2, m)) \)
	and \( F = \GF(q) \).
	If \( m > 2 \), then
	\[ G = F^{2m} \rtimes ((\GL_2(q) \circ \GL_m(q)) \rtimes \Aut(F)). \]
	If \( m = 2 \), then
	\[ G = F^4 \rtimes (((\GL_2(q) \circ \GL_2(q)) \rtimes \Aut(F)) \rtimes C_2), \]
	where the additional automorphism of order~2 exchanges components of simple tensors.
\end{proposition}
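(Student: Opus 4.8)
The plan is to realise $H_q(2,m)$ as a Cayley graph on the additive group of $M = U \otimes W$, where $U = F^2$, $W = F^m$ and $F = \GF(q)$, so that vertices are $2 \times m$ matrices and the connection set $S$ consists of the nonzero rank-one matrices, i.e.\ the simple tensors $u \otimes w$ with $u, w \neq 0$. The easy direction is a direct check that each claimed generator preserves the rank of a difference of matrices, hence is a graph automorphism: translations by $M$ act regularly and give the factor $F^{2m}$; a pair $(g,h) \in \GL(U) \times \GL(W)$ acting by $X \mapsto g X h^{\mathrm{tr}}$ preserves rank and has kernel the scalars $(\lambda I, \lambda^{-1} I)$, yielding the central product $\GL_2(q) \circ \GL_m(q)$; field automorphisms act entrywise and preserve rank, giving $\Aut(F)$; and when $m = 2$ transpose $X \mapsto X^{\mathrm{tr}}$ also preserves rank, contributing the extra $C_2$ that exchanges the tensor factors. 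Verifying that these close up into the stated semidirect product, with the indicated central and normal structure, is routine.

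The substance of the proof is the reverse inclusion. Since the translation group acts regularly, $\Aut(H_q(2,m)) = M \rtimes \Aut(H_q(2,m))_0$ and it suffices to identify the stabiliser $\Aut(H_q(2,m))_0$ of the zero matrix. I would first analyse the maximal cliques. A set of matrices is a clique precisely when any two differ by a rank-one matrix, and since a difference of two simple tensors has rank $\leq 1$ exactly when the two share a left factor or a right factor, one finds exactly two families of maximal cliques: the ``left'' cliques $\{\, u \otimes w : w \in W \,\} + X$ of size $q^m$ (fixed left factor $\langle u \rangle$) and the ``right'' cliques $\{\, u \otimes w : u \in U \,\} + X$ of size $q^2$ (fixed right factor $\langle w \rangle$). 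When $m > 2$ these sizes differ, so any automorphism preserves each family; when $m = 2$ both have size $q^2$, the two families are interchanged by transpose, and this is exactly the source of the $C_2$.

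From the clique families I would reconstruct the two projective geometries attached to the graph: the left cliques through the origin are indexed by the points $\langle u \rangle$ of $\mathbb{P}(U)$ and the right cliques by the points $\langle w \rangle$ of $\mathbb{P}(W)$, and incidence of cliques recovers the full projective structure of $\mathbb{P}(U)$ and $\mathbb{P}(W)$. Hence $\Aut(H_q(2,m))_0$ induces collineations of $\mathbb{P}(U)$ and of $\mathbb{P}(W)$ (possibly after the swap when $m = 2$), and by the fundamental theorem of projective geometry these collineations are semilinear. This upgrades an a priori purely combinatorial automorphism to a semilinear map, forcing $\Aut(H_q(2,m))_0$ to consist of semilinear bijections of $M$ preserving the set of rank-one tensors. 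Such maps are exactly those induced by a pair of semilinear bijections of $U$ and $W$ sharing a common companion field automorphism, which after quotienting by the common scalars gives $(\GL_2(q) \circ \GL_m(q)) \rtimes \Aut(F)$, together with the factor-swapping $C_2$ when $m = 2$.

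The main obstacle is precisely the step that bridges combinatorics and linear algebra: showing that every graph automorphism fixing $0$ is semilinear. Everything downstream is bookkeeping once semilinearity is in hand, and everything upstream (that the listed maps are automorphisms) is a rank computation. The reconstruction of $\mathbb{P}(U)$ and $\mathbb{P}(W)$ from the clique geometry, together with the careful separation of the two clique families by size for $m > 2$ versus their interchange for $m = 2$, is where the real work lies; the fundamental theorem of projective geometry then does the heavy lifting of producing the field automorphism.
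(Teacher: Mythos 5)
You are attempting to prove from scratch a statement that the paper itself does not prove: Proposition~\ref{bilaut} is quoted directly from Brouwer--Cohen--Neumaier (Theorem~9.5.1 of \cite{brouwerDRG}), and it is essentially Hua's fundamental theorem of the geometry of \( 2 \times m \) rectangular matrices. Your setup (Cayley graph on rank-one differences), the forward inclusion, and the classification of maximal cliques into a ``left'' family of size \( q^m \) and a ``right'' family of size \( q^2 \) --- preserved separately when \( m > 2 \), swapped by transposition when \( m = 2 \) --- are all correct and follow the classical line of attack.

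The genuine gap is the step you yourself flag as the crux: deducing semilinearity via the fundamental theorem of projective geometry. That theorem applies only to projective spaces of projective dimension at least \( 2 \), i.e.\ vector space dimension at least \( 3 \). Here \( \mathbb{P}(U) \) is a projective \emph{line} (\( \dim U = 2 \)): it has no internal incidence structure, every permutation of its \( q+1 \) points is a ``collineation'', and the fundamental theorem says nothing. The fact that the stabilizer of \( 0 \) induces only \( \mathrm{P\Gamma L}_2(q) \) on \( \mathbb{P}(U) \), rather than all of \( \Sym(q+1) \), is part of what has to be proved, and it ultimately comes from constraints imposed by rank-two matrices (cross-ratio--type conditions), not from the clique family itself. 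When \( m = 2 \) --- a case included in the statement --- both factors are projective lines and your method yields nothing at all. Two further steps are asserted rather than proved: first, ``incidence of cliques recovers the full projective structure of \( \mathbb{P}(W) \)'' is not routine, since distinct right cliques through the origin meet only in \( 0 \), so their pairwise incidence is trivial; recovering the lines of \( \mathbb{P}(W) \) forces you to use cliques away from the origin (the attenuated-space geometry), which is exactly where the work in the classical proof lies. Second, even granting semilinear actions on both \( \mathbb{P}(U) \) and \( \mathbb{P}(W) \), you still must show that the vertex map itself is additive on \( M \); its action on the cliques through \( 0 \) does not determine it on vertices without a patching argument.

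If you want a self-contained route for the troublesome cases, note how the paper handles the square case elsewhere: \( H_q(2,2) \cong \VO_4^+(q) \) (Proposition~\ref{subdinter}), and \( \Aut(\VO_4^+(q)) \) is computed via the Alexandroff--Lester theorem (Lemma~\ref{alexlest}), which extracts semilinearity from the preservation of the zero set of a quadratic form --- precisely the kind of tool that works where the fundamental theorem of projective geometry does not.
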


Let \( V \) be a vector space endowed with a quadratic form \( \kappa \).
We say that a vector \( v \in V \) is \emph{isotropic} if \( \kappa(v) = 0 \).

\begin{lemma}[\cite{schroder}]\label{alexlest}
	Let \( V \) be a vector space over some (possibly finite) field \( F \), and suppose that \( \dim V \geq 3 \).
	Let \( \kappa : V \to F \) be a non-singular quadratic form, possessing an isotropic vector. If \( f \) is a permutation
	of \( V \) with the property that
	\[ \kappa(x - y) = 0 \Leftrightarrow \kappa(x^f - y^f) = 0, \]
	then \( f \in \AGL(V) \) and \( f : x \mapsto x^\phi + v \), \( v \in V \),
	where \( \phi \in \GamL(V) \) is a semisimilarity of \( \kappa \), i.e.\ there exist \( \lambda \in F^{\times} \)
	and \( \alpha \in \Aut(F) \) such that \( \kappa(x^\phi) = \lambda \kappa(x)^{\alpha} \) for all \( x \in V \).
\end{lemma}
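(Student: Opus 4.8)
The plan is to regard the relation $\kappa(x-y)=0$ as adjacency in the Cayley graph $\Gamma=\operatorname{Cay}(V,S)$ with connection set $S=\{w\in V\setminus\{0\}:\kappa(w)=0\}$, which is symmetric since $\kappa(-w)=\kappa(w)$, so that $f$ is exactly a graph automorphism of $\Gamma$. Since every translation $x\mapsto x+a$ preserves all differences and hence lies in $\Aut(\Gamma)$, after replacing $f$ by the map $x\mapsto x^{f}-0^{f}$ (still a graph automorphism, being the composite of $f$ with a translation) I may assume $0^{f}=0$; it then suffices to prove that this normalized map is a semisimilarity $\phi\in\GamL(V)$ of $\kappa$, and the general case follows with $v=0^{f}$. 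With $0$ fixed, the neighbours of $0$ are precisely the nonzero isotropic vectors, so $f$ permutes the \emph{isotropic cone} $\mathcal{C}=\{x\in V:\kappa(x)=0\}$.

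The heart of the argument is to recover the affine-line structure of $V$ from $\Gamma$ alone, so that the normalized $f$ becomes a collineation of the affine space and the fundamental theorem of affine geometry applies. Here I would exploit the dichotomy that an affine line $a+\langle w\rangle$ is a \emph{clique} of $\Gamma$ when $\kappa(w)=0$ and a \emph{coclique} when $\kappa(w)\neq0$, since $\kappa((t-s)w)=(t-s)^{2}\kappa(w)$ vanishes for $t\neq s$ only in the isotropic case. The plan is to characterize the totally singular affine lines purely combinatorially---for an edge $\{x,y\}$ the line through $x$ and $y$ should be reconstructible from common neighbours and neighbourhood containments---and dually to recover the anisotropic lines inside the complementary graph, thereby exhibiting $f$ as a bijection carrying affine lines to affine lines. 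As $\dim V\geq3$, the fundamental theorem of affine geometry then forces $f$ to be semilinear, i.e.\ $f=\phi\in\GamL(V)$ with an associated field automorphism $\alpha$.

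It remains to upgrade ``semilinear and cone-preserving'' to ``semisimilarity''. Here I would twist by $\alpha$: the map $\kappa'(x)=\kappa(x^{\phi})^{\alpha^{-1}}$ is again a genuine quadratic form on $V$, and by the previous step its zero locus coincides with that of $\kappa$. Two non-singular quadratic forms on a space of dimension at least $3$ that share the same (nonempty, since an isotropic vector exists) zero set are proportional, so $\kappa'=\lambda\kappa$ for some $\lambda\in F^{\times}$; unwinding the twist gives $\kappa(x^{\phi})=\lambda^{\alpha}\kappa(x)^{\alpha}$, which is exactly the asserted semisimilarity condition, whence $\phi\in\GamO(V)$.

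The main obstacle is the middle step---the combinatorial reconstruction of affine lines from the isotropy-of-differences relation. The delicate points are small Witt index (where maximal totally singular subspaces, and hence the maximal cliques, are merely lines and so carry little information), characteristic $2$ (where $\rad(f)$ may be nonzero and the interplay between $\kappa$ and its polarization is subtler), and the need to treat arbitrary, possibly very small, ground fields uniformly, since the classical fundamental-theorem and ``same-zeros-implies-proportional'' statements can degenerate there. The hypotheses $\dim V\geq3$ and the existence of an isotropic vector are precisely what guarantee enough isotropic lines through each point to make both the reconstruction and the final proportionality step go through.
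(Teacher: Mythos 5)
The paper offers no internal proof of this lemma to compare against: it is quoted, with citation, from Schr\"oder's article, i.e.\ it \emph{is} the Alexandroff--Lester theorem. So the only question is whether your sketch amounts to a self-contained proof of that theorem, and it does not. The parts you actually carry out are the routine ones: composing with a translation to normalize \( 0^f = 0 \), observing that \( f \) is an automorphism of the Cayley graph \( \Gamma \) on \( V \) whose connection set is the set of nonzero isotropic vectors, and the final twisting argument. The entire content of the theorem is the step you defer --- showing that the normalized map is semilinear --- and you label it ``the main obstacle'' rather than prove it.

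Concretely, here is why your plan for that middle step does not close the gap. Take the case for which the paper actually invokes the lemma: \( \kappa \) elliptic of Witt index \( 1 \) on \( V = \GF(q)^4 \), so that \( \Gamma \) is \( \VO_4^-(q) \). A clique of \( \Gamma \) through \( 0 \) consists of pairwise orthogonal isotropic vectors (if \( \kappa(u)=\kappa(v)=\kappa(u-v)=0 \) then the polarization vanishes on \( u,v \)), hence spans a totally singular subspace; with Witt index \( 1 \) the maximal cliques are therefore exactly the isotropic affine lines. Thus the graph hands you lines in only \( q^2+1 \) of the \( q^3+q^2+q+1 \) directions. Your proposed dual characterization of the anisotropic lines fails: such lines are indeed cliques of the complementary graph, but so are far larger sets (for instance any anisotropic \( 2 \)-dimensional subspace, of size \( q^2 \)), so being a clique, or even a maximal clique, in the complement does not recognize a line, and you give no other combinatorial test. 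The fundamental theorem of affine geometry needs preservation of collinearity for \emph{all} lines (and in a form valid over \( \GF(2) \) and \( \GF(3) \)); bootstrapping from the isotropic lines alone up to all lines is precisely what Schr\"oder's proof accomplishes, and it is absent here. Secondarily, your final step assumes that two non-singular quadratic forms in dimension \( \geq 3 \) with the same nonempty zero locus are proportional; this is true, but over arbitrary (in particular very small) fields it is itself a lemma requiring proof or a reference, not a remark. In outline your strategy is the standard one for theorems of Alexandroff--Lester type, but as written the proof has a genuine hole exactly where the theorem lives.
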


Denote by \( \operatorname{\Gamma O}^{\epsilon}_{2m}(q) \) the group of all semisimilarities of a non-degenerate
quadratic form of type \( \epsilon \) on the vector space of dimension \( 2m \) over the finite field of order \( q \).
The reader is referred to \cite[Sections~2.7 and~2.8]{kleidman} for the structure and properties
of groups \( \operatorname{\Gamma O}^{\epsilon}_{2m}(q) \).

\begin{proposition}\label{affpolaut}
	Let \( q \) be a prime power and \( m \geq 2 \). Set \( F = \GF(q) \). Then
	\[ \Aut(\VO^{\epsilon}_{2m}(q)) = F^{2m} \rtimes \operatorname{\Gamma O}^{\epsilon}_{2m}(q), \, \epsilon = \pm. \]
\end{proposition}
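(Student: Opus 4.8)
The plan is to prove both inclusions, with the nontrivial direction being essentially a direct application of Lemma~\ref{alexlest}. Throughout write \( V = F^{2m} \) with \( F = \GF(q) \), and let \( \kappa \) be the non-singular quadratic form of type \( \epsilon \) defining \( \Gamma = \VO^{\epsilon}_{2m}(q) \).

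First I would verify the easy inclusion \( F^{2m} \rtimes \operatorname{\Gamma O}^{\epsilon}_{2m}(q) \leq \Aut(\Gamma) \). Translations by vectors of \( V \) preserve differences and hence preserve the adjacency relation \( \kappa(u - v) = 0 \). A semisimilarity \( \phi \) satisfies \( \kappa(x^\phi) = \lambda \kappa(x)^{\alpha} \) for some \( \lambda \in F^{\times} \) and \( \alpha \in \Aut(F) \), so \( \kappa(x^\phi) = 0 \) if and only if \( \kappa(x) = 0 \); applying this with \( x = u - v \) shows that \( \phi \) preserves adjacency. Since \( \operatorname{\Gamma O}^{\epsilon}_{2m}(q) \) is by definition the group of all semisimilarities of \( \kappa \), the whole semidirect product acts by graph automorphisms.

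For the reverse inclusion, let \( f \) be an arbitrary automorphism of \( \Gamma \). Then \( f \) is a permutation of \( V \) with \( \kappa(x - y) = 0 \Leftrightarrow \kappa(x^f - y^f) = 0 \) for all \( x, y \in V \) (the case \( x = y \) being trivial). To invoke Lemma~\ref{alexlest} I must check its hypotheses: that \( \dim V \geq 3 \), that \( \kappa \) is non-singular, and that \( \kappa \) possesses an isotropic vector. Here \( \dim V = 2m \geq 4 \); the form \( \kappa \) is non-singular by construction, and since \( \dim V \) is even, non-singularity coincides with non-degeneracy as noted before the statement, so the semisimilarities of \( \kappa \) are exactly the elements of \( \operatorname{\Gamma O}^{\epsilon}_{2m}(q) \). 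Finally, a non-degenerate quadratic form of type \( \epsilon \) on a \( 2m \)-dimensional space over a finite field has Witt index \( m \) when \( \epsilon = + \) and \( m - 1 \) when \( \epsilon = - \); as \( m \geq 2 \) this index is at least \( 1 \), so a nonzero isotropic vector exists (equivalently, \( \Gamma \) has edges). Lemma~\ref{alexlest} then yields \( f : x \mapsto x^\phi + v \) with \( v \in V \) and \( \phi \in \GamL(V) \) a semisimilarity of \( \kappa \), that is, \( \phi \in \operatorname{\Gamma O}^{\epsilon}_{2m}(q) \). Hence \( f \in F^{2m} \rtimes \operatorname{\Gamma O}^{\epsilon}_{2m}(q) \), and combining the two inclusions completes the proof.

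I do not expect a serious obstacle, since Schröder's lemma does essentially all of the work. The only points requiring care are confirming the existence of an isotropic vector, so that the graph genuinely has edges and the lemma is applicable, and matching the non-singular hypothesis of the lemma with the non-degenerate form used to define \( \operatorname{\Gamma O}^{\epsilon}_{2m}(q) \); both are settled immediately by the even-dimensionality of \( V \).
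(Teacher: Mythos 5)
Your proof is correct and takes essentially the same route as the paper: both reduce the statement to a direct application of Lemma~\ref{alexlest} after observing that \( \dim V = 2m \geq 3 \), that non-singular and non-degenerate coincide in even dimension, and that \( \kappa \) has an isotropic vector. You merely spell out what the paper leaves implicit (the easy inclusion via translations and semisimilarities, and the Witt-index justification for the isotropic vector), which is fine.
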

\begin{proof}
	Recall that the graph \( \VO^{\epsilon}_{2m}(q) \) is defined by a vector space
	\( V = F^{2m} \) over \( F \) and a non-singular (or, equivalently, non-degenerate)
	quadratic form \( \kappa : V \to F \). Since \( m \geq 2 \), we have \( \dim V \geq 3 \)
	and \( \kappa \) possesses an isotropic vector. The claim now follows from Lemma~\ref{alexlest}.
\end{proof}

\begin{proposition}[{\cite[Theorem~9.5.3]{brouwerDRG}}]\label{altaut}
	Let \( q \) be a prime power and set \( F = \GF(q) \). Then
	\[ \Aut(A(5, q)) = F^{10} \rtimes ((\GamL_5(q) / \{ \pm 1 \}) \times (F^{\times} / (F^{\times})^2)). \]
\end{proposition}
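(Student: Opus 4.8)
The plan is to realize \( A(5,q) \) explicitly and reduce the statement to the fundamental theorem of the geometry of alternating matrices (Hua, Wan). I identify the vertex set with the space \( \mathcal{A} \) of alternating \( 5 \times 5 \) matrices over \( F \), which is a \( 10 \)-dimensional \( F \)-space, two matrices being adjacent exactly when their difference has rank \( 2 \) (the least positive rank of a nonzero alternating form), so that the graph distance between \( A \) and \( B \) equals \( \tfrac12 \operatorname{rank}(A-B) \). The easy direction is to check that every map on the right-hand side is an automorphism: a translation \( A \mapsto A + A_0 \) by a fixed \( A_0 \in \mathcal{A} \) preserves differences; a congruence \( A \mapsto {}^{t}P A P \) with \( P \in \GL_5(q) \) preserves rank; applying a field automorphism \( \sigma \in \Aut(F) \) entrywise preserves rank; and a scalar multiplication \( A \mapsto \lambda A \), \( \lambda \in F^{\times} \), preserves rank. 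Hence all of these, and their compositions, lie in \( \Aut(A(5,q)) \).

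Next I would pin down the abstract structure of the group they generate. The congruence action of \( \GL_5(q) \) on \( \mathcal{A} \) has kernel exactly \( \{ \pm I \} \): if \( {}^{t}P A P = A \) for all alternating \( A \), then comparing the images of the forms \( E_{ij} - E_{ji} \) forces \( P \) to be scalar, and a scalar \( cI \) acts as multiplication by \( c^2 \), which is trivial iff \( c = \pm 1 \). Adjoining field automorphisms thus gives \( \GamL_5(q)/\{\pm 1\} \). Scalar multiplication by \( \lambda \) commutes with congruence, and congruence by \( cI \) realizes \( A \mapsto c^2 A \), so the scalars already present among the congruences are precisely those by squares; the genuinely new scalars form \( F^{\times}/(F^{\times})^2 \). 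The one subtle point is why this last factor is a \emph{direct} factor: conjugating the scalar map \( A \mapsto \lambda A \) by \( \sigma \) yields \( A \mapsto \lambda^{\sigma} A \), and since a field automorphism induces a group automorphism of the cyclic group \( F^{\times} \) it preserves the unique index-two subgroup \( (F^{\times})^2 \); hence \( \lambda^{\sigma} \) and \( \lambda \) lie in the same square class, so the image of \( F^{\times}/(F^{\times})^2 \) is central. Together with the regular translation group this gives exactly \( F^{10} \rtimes ((\GamL_5(q)/\{\pm 1\}) \times (F^{\times}/(F^{\times})^2)) \).

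For the hard direction I would show there are no further automorphisms. Since translations act regularly on the vertices, it suffices to determine the stabilizer of the zero matrix, that is, the bijections \( \varphi \) of \( \mathcal{A} \) with \( \varphi(0) = 0 \) for which \( A \sim B \) if and only if \( \varphi(A) \sim \varphi(B) \); equivalently, bijections preserving the rank-\( 2 \)-difference relation in both directions. This is precisely the hypothesis of the fundamental theorem of the geometry of alternating matrices: for \( n \geq 4 \), every such bijection of the alternating \( n \times n \) matrices over \( F \) has the form \( A \mapsto \lambda\, {}^{t}P A^{\sigma} P + A_0 \) with \( \lambda \in F^{\times} \), \( P \in \GL_n(q) \), \( \sigma \in \Aut(F) \) and \( A_0 \) fixed. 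Applying this with \( n = 5 \geq 4 \) and \( A_0 = 0 \) exhibits the stabilizer of \( 0 \) as exactly the semisimilarity group described above, and the parametrization ambiguities (\( P \mapsto -P \) is trivial, \( P \mapsto cP \) is absorbed into \( \lambda \)) reproduce the quotients \( /\{\pm 1\} \) and \( /(F^{\times})^2 \) identified in the previous step.

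The main obstacle is this final step: the entire content lies in excluding exotic automorphisms, and the bookkeeping of the first two paragraphs is routine by comparison. The proof of the fundamental theorem itself proceeds by a delicate analysis of the maximal cliques of the graph (maximal families of pairwise-adjacent matrices, which correspond to cosets of certain rank-at-most-\( 2 \) subspaces) and of the induced incidence geometry, from which the semilinear structure is reconstructed out of the combinatorics of adjacency alone; the dimension hypothesis \( n \geq 4 \) is exactly what guarantees this reconstruction works, and verifying that the graph adjacency of \( A(5,q) \) coincides with the arithmetic-distance-one relation of that theorem is what lets me invoke it cleanly.
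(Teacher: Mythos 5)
The paper does not actually prove this proposition: it is quoted directly from \cite[Theorem~9.5.3]{brouwerDRG}, so the only meaningful comparison is with the proof of that cited result, and your route is essentially that proof. Realizing the vertices as alternating \( 5 \times 5 \) matrices with adjacency given by rank-\( 2 \) differences, reducing to the stabilizer of \( 0 \) via the regular translation group, and invoking the fundamental theorem of the geometry of alternating matrices for \( n \geq 4 \) (with \( n = 5 \) duly checked) is exactly how this automorphism group is determined; your verification of the easy inclusion and of the kernel \( \{\pm I\} \) of the congruence action is also fine. In substance you correctly obtain that \( \Aut(A(5,q)) \) consists of the maps \( A \mapsto \lambda\, {}^{t}P A^{\sigma} P + A_0 \).

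The genuine gap is in the final bookkeeping, where you claim the extra scalars form a central \emph{direct} factor \( F^{\times}/(F^{\times})^2 \). The scalar maps \( m_{\lambda} : A \mapsto \lambda A \) form a subgroup \( M \simeq F^{\times} \) of the zero-stabilizer \( S \), not a copy of \( F^{\times}/(F^{\times})^2 \), and \( M \) meets the image \( U \simeq \GamL_5(q)/\{\pm 1\} \) of the semilinear congruences precisely in the square scalars (congruence by \( cI \) is \( m_{c^2} \)). Your observation that field automorphisms preserve square classes only says that conjugation acts trivially on the \emph{quotient} \( M/(M \cap U) \); it does not produce a complement to \( U \) in \( S \), and for \( q \equiv 1 \pmod 4 \) none exists. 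Indeed, every central element of \( S \) is a scalar map \( m_{\nu} \) with \( \nu \) in the prime field (commuting with all \( m_{\mu} \) forces linearity, and commuting with the congruence action of \( \SL_5(q) \), which is irreducible on \( \Lambda^2 \), forces a scalar), so the unique central involution of \( S \) is \( m_{-1} \), which lies in \( U \) exactly when \( -1 \) is a square; moreover any normal subgroup of index \( 2 \) in \( S \) meets the cyclic group \( M \) in a subgroup of index at most \( 2 \) and hence contains \( m_{-1} \). Consequently, for \( q \equiv 1 \pmod 4 \) the group \( S \) has no direct factor of order \( 2 \) at all (for \( q = 5 \) there is not even an involution outside \( U \), so \( U \) has no complement whatsoever); the accurate description is \( S \simeq \bigl( (F^{\times} \times \GL_5(q)) / \{ (c^{-2}, cI) : c \in F^{\times} \} \bigr) \rtimes \Aut(F) \), i.e.\ the product of \( U \) and \( M \) amalgamated over the squares. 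So your argument pins down the correct permutation group, but the passage to the displayed abstract structure is not justified as written --- and cannot be, since the literal direct product fails for \( q \equiv 1 \pmod 4 \); the ``\( \times \)'' in the statement (which the paper inherits from its source) must be read as this amalgamated product, and any honest proof has to say so rather than assert centrality of a quotient.
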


\begin{lemma}\label{vd55norm}
	Let \( q \) be a prime power, \( q^{16} = p^d \), let \( F = \GF(q) \), \( V = F^{16} \) and set \( G = \Aut(\VD_{5,5}(q)) \).
	Then \( G = V \rtimes G_0 \), and
	\[ F^{\times} \circ D_5(q) \leq G_0 = N_{\GL_d(p)}(D_5(q)), \]
	where \( D_5(q) \) acts on the spin module.
	Moreover, \( G_0/F^{\times} \) is an almost simple group and \( G_0 \leq \GamL_{16}(q) \).
\end{lemma}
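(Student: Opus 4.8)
The plan is to exploit that \( G = \Aut(\VD_{5,5}(q)) \) is itself a primitive affine rank~3 group, so that the classification in Theorem~\ref{affrankthree} can be brought to bear on its point stabiliser. First I would record that \( \VD_{5,5}(q) \) arises from a group in class (A10), which is primitive affine and (being a class~\( \mathcal{S} \) linear group acting on the spin module) does not preserve a product decomposition; hence by Proposition~\ref{nonabsoc}~(iv) the group \( G \) is again affine. Identifying \( \Omega \) with \( V \), the translation subgroup \( V \) is the elementary abelian socle and \( G = V \rtimes G_0 \), where \( G_0 \) acts on the translations by conjugation and therefore embeds in \( \GL(V) = \GL_d(p) \). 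This yields the affine decomposition together with \( G_0 \leq \GL_d(p) \) at no cost.

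Next I would pin down \( D_5(q) \) as a \emph{normal} subgroup of \( G_0 \). Since \( G \) is a primitive affine rank~3 group, its stabiliser \( G_0 \) is one of the linear groups in Theorem~\ref{affrankthree}. The subdegrees of \( G \) are those of \( \VD_{5,5}(q) \), and by Proposition~\ref{subdinter} no other affine rank~3 graph shares them; consequently \( G_0 \) lies in class (A10), whence \( D_5(q) \trianglelefteq G_0 \) acts on the spin module. The lower bound \( F^{\times} \circ D_5(q) \leq G_0 \) then follows: \( D_5(q) \) preserves the graph by construction, while each scalar \( \lambda \in F^{\times} \) commutes with \( D_5(q) \) and so sends the \( D_5(q) \)-orbit of neighbours of \( 0 \) to an orbit of equal size; as the two nonzero orbits have distinct sizes (the larger subdegree being divisible by \( p \)), the neighbour orbit is fixed and \( F^{\times} \leq G_0 \).

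For the equality \( G_0 = N_{\GL_d(p)}(D_5(q)) \) I would argue by double inclusion. The inclusion \( G_0 \leq N_{\GL_d(p)}(D_5(q)) \) is exactly the normality just obtained. Conversely, any \( g \in N_{\GL_d(p)}(D_5(q)) \) permutes the \( D_5(q) \)-orbits on \( V \); again using that the two nonzero orbits have distinct sizes, \( g \) fixes each setwise, hence preserves adjacency and lies in \( G_0 \). To place \( G_0 \) inside \( \GamL_{16}(q) \) I would invoke Schur's lemma: the half-spin module is absolutely irreducible over \( F = \GF(q) \), so the centraliser of \( D_5(q) \) in the matrix algebra \( \operatorname{End}_{\GF(p)}(V) \) is precisely the field \( F \), and in particular \( C_{\GL_d(p)}(D_5(q)) = F^{\times} \). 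Since \( G_0 \) normalises \( D_5(q) \), conjugation by \( G_0 \) preserves this centraliser field and acts on it by ring automorphisms, that is, by elements of \( \operatorname{Gal}(F/\GF(p)) \); thus \( G_0 \) acts \( F \)-semilinearly and \( G_0 \leq \GamL_{16}(q) \). Finally, in \( G_0/F^{\times} \) the image of \( D_5(q) \) is simple (being \( D_5(q) \) modulo its scalar centre \( D_5(q) \cap F^{\times} \)) and normal, while \( C_{G_0}(D_5(q)) = F^{\times} \) forces its centraliser in \( G_0/F^{\times} \) to be trivial; hence \( G_0/F^{\times} \) is almost simple.

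The main obstacle is the step forcing \( D_5(q) \trianglelefteq G_0 \): a priori the full automorphism group of the graph might contain permutations that do not normalise \( D_5(q) \), and excluding these is precisely what requires the classification of affine rank~3 groups together with the uniqueness of subdegrees in Proposition~\ref{subdinter}. Once normality is secured, the Schur-lemma identification of the centraliser field and the orbit-size comparison render the remaining assertions routine.
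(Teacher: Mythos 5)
Your proof follows the paper's skeleton for the first half: realize \( G \) as the \(2\)-closure of the rank~3 group \( H = V \rtimes (F^{\times} \circ D_5(q)) \), conclude via Lemma~\ref{praeger2} (the paper uses it directly, avoiding the finitely-many-exceptions caveat you inherit from Proposition~\ref{nonabsoc}) that \( G \) is affine, and use Proposition~\ref{subdinter} with the classification to place \( G_0 \) in class (A10), giving \( G_0 \leq N_{\GL_d(p)}(D_5(q)) \). The genuine gap is in your reverse inclusion \( N_{\GL_d(p)}(D_5(q)) \leq G_0 \). You argue that any \( g \) normalizing \( D_5(q) \) permutes ``the two'' \( D_5(q) \)-orbits on nonzero vectors and must fix each since their sizes differ. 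But the two classes whose sizes you know (the subdegrees, one divisible by \( p \), one not) are the orbits of \( F^{\times} \circ D_5(q) \): they come from Liebeck's lemma that \( D_5(q) \) has two orbits on the \emph{lines} of \( P_1(V) \). A priori each of these classes could split into several \( D_5(q) \)-orbits, and then an element of the normalizer could permute the finer orbits across the two classes; size considerations alone cannot exclude this. (It is in fact true that \( D_5(q) \) alone has exactly two orbits on \( V \setminus \{0\} \), but that requires a stabilizer computation --- essentially that the parabolic stabilizing a pure-spinor line surjects onto \( F^{\times} \) --- which you do not supply.) The same conflation appears in your lower-bound argument for \( F^{\times} \leq G_0 \). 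The paper avoids the issue entirely: since \( F^{\times} \circ D_5(q) \leq G_0 \leq N \), every \( N \)-orbit is a union of the two known classes, so either \( N \) has the same orbits as \( H_0 \) (whence \( V \rtimes N \leq H^{(2)} = G \) and \( N \leq G_0 \)), or \( N \) is transitive on nonzero vectors, and the latter is impossible by Hering's theorem~\cite{hering}. Your route can be repaired --- show \( N \) normalizes \( F^{\times} \cdot D_5(q) \) via the centralizer identification and then permutes \emph{its} two orbits --- but as written the key step does not go through.

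A second, smaller gap: the identification \( C_{\GL_d(p)}(D_5(q)) = F^{\times} \) does not follow from absolute irreducibility over \( F \) alone. If the module were obtained by extending scalars from a proper subfield, the centralizer in \( \GL_d(p) \) would be a full matrix group: for instance \( \SL_2(p) \) acting on \( \GF(p^2) \otimes_{\GF(p)} W \), with \( W \) the natural module, is absolutely irreducible over \( \GF(p^2) \), yet its centralizer in \( \GL_4(p) \) is \( \GL_2(p) \), not \( \GF(p^2)^{\times} \). What you need is that \( V \) remains irreducible over \( \GF(p) \), equivalently that the Galois conjugates of the half-spin module are pairwise non-isomorphic (i.e.\ \( \GF(q) \) is its minimal field of definition); this is true, by Steinberg's theory, but must be invoked. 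The paper sidesteps both this point and the almost-simplicity argument by citing Liebeck's structural results ((1.4) of~\cite{liebeckAffine} and the value \( a = 16 \) in Table~\ref{subtab}); your argument, once patched, is more self-contained, but in its present form it is incomplete precisely at the steps those citations (and Hering's theorem) are designed to cover.
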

\begin{proof}
	Set \( H = V \rtimes (F^{\times} \circ D_5(q)) \). By \cite[Lemma~2.9]{liebeckAffine}, \( D_5(q) \) has
	two orbits on the set of lines \( P_1(V) \), so \( H \) is an affine rank~3 group of type (A10).
	Clearly \( G = H^{(2)} \) so by Lemma~\ref{praeger2}, \( G \) is an affine rank~3 group.
	By Proposition~\ref{subdinter}, \( G \) belongs to class (A10) and the main result of \cite{liebeckAffine}
	implies that \( G_0 \leq N_{\GL_d(p)}(D_5(q)) \). By \cite[(1.4)]{liebeckAffine}, the generalized Fitting subgroup
	of \( G_0/F^{\times} \) is simple, hence this quotient group is almost simple.
	By Hering's theorem~\cite{hering} (see also~\cite[Appendix~1]{liebeckAffine}),
	the normalizer \( N_{\GL_d(p)}(D_5(q)) \) cannot be transitive on nonzero vectors of \( V \),
	so \( G_0 = N_{\GL_d(p)}(D_5(q)) \) as claimed.

	Finally, let \( a \) be the minimal integer such that \( G_0 \leq \GamL_a(p^{d/a}) \).
	By Table~\ref{subtab}, \( a = 16 \), so the last inclusion follows.
\end{proof}

Denote by \( D_5(q) \) an orthogonal group of universal type, in particular, recall that
\( |Z(D_5(q))| = \gcd(4, q^5-1) \) (see~\cite[Table~5]{atlas}).
We write \( \operatorname{Inndiag}(D_5(q)) \) for the overgroup of \( D_5(q) \) in \( \Aut(D_5(q)) \),
containing all diagonal automorphisms.

\begin{proposition}\label{vd55aut}
	Let \( q \) be a prime power, and set \( F = \GF(q) \). Then
	\[ \Aut(\VD_{5,5}(q)) = F^{16} \rtimes ((F^{\times} \circ \operatorname{Inndiag}(D_5(q))) \rtimes \Aut(F)). \]
\end{proposition}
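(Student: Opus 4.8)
The goal is to compute the full automorphism group of the affine half spin graph $\VD_{5,5}(q)$, and the natural strategy is to combine the structural information from Lemma~\ref{vd55norm} with the classical theory of automorphisms of simple groups of Lie type. By Lemma~\ref{vd55norm} we already know that $\Aut(\VD_{5,5}(q)) = V \rtimes G_0$ with $V = F^{16}$ and
\[ F^{\times} \circ D_5(q) \leq G_0 = N_{\GL_d(p)}(D_5(q)), \]
where $D_5(q)$ acts on the $16$-dimensional spin module and $G_0/F^{\times}$ is almost simple with socle the simple group $D_5(q)/Z(D_5(q))$. So the entire problem reduces to identifying the normalizer $N_{\GL_d(p)}(D_5(q))$ explicitly. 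First I would fix notation: write $L = D_5(q)$ for the orthogonal group of universal type acting on the spin module, and observe that the center $F^{\times}$ of the central product centralizes $L$, so $F^{\times} \circ L$ already accounts for all scalars together with inner automorphisms.

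**Identifying the normalizer via automorphisms of $L$.** The key step is to pass from the abstract almost simple group $G_0/F^{\times}$ to its concrete realization inside $\GamL_{16}(q)$, using the inclusion $G_0 \leq \GamL_{16}(q)$ from Lemma~\ref{vd55norm}. Conjugation by an element of $N_{\GL_d(p)}(L)$ induces an automorphism of $L$, giving a homomorphism into $\Aut(L)$ whose kernel is $C_{\GL_d(p)}(L)$. By Schur's lemma (the spin module is absolutely irreducible for $D_5(q)$), the centralizer consists of scalars, i.e.\ $C_{\GL_d(p)}(L) = F^{\times}$. Thus $G_0/F^{\times}$ embeds into $\Aut(L)$, and I need to determine exactly which automorphisms are realized. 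The automorphisms of $L$ are generated by inner, diagonal, field and graph automorphisms; since the spin module is stabilized (up to equivalence) precisely by those outer automorphisms that fix its isomorphism class, the graph automorphism of $D_5$ — which swaps the two half-spin representations — does \emph{not} extend to a linear map on this fixed $16$-dimensional module. Hence only diagonal and field automorphisms survive, yielding $G_0/F^{\times} \leq \operatorname{Inndiag}(D_5(q)) \rtimes \Aut(F)$. I would make the diagonal part precise by recalling that $\operatorname{Inndiag}(D_5(q))$ is exactly the group of projective transformations of the spin module normalizing the image of $L$, while field automorphisms of $\GF(q)$ lift to semilinear maps permuting a Frobenius-fixed basis.

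**Establishing equality and assembling the result.** To upgrade the inclusion to equality I must exhibit each claimed automorphism as an element of $N_{\GL_d(p)}(L)$. Diagonal automorphisms are induced by conjugation by elements of the full conformal orthogonal group acting on the spin module, which lie in $\GL_{16}(q)$ and normalize $L$; field automorphisms are realized by the standard semilinear Frobenius map, which lies in $\GamL_{16}(q) \leq \GL_d(p)$ and normalizes $L$ since $L$ is defined over $\GF(q)$. Combining these with the scalars gives
\[ F^{\times} \circ \operatorname{Inndiag}(D_5(q)) \rtimes \Aut(F) \leq G_0, \]
and the reverse inclusion follows from the previous paragraph, so $G_0 = (F^{\times} \circ \operatorname{Inndiag}(D_5(q))) \rtimes \Aut(F)$; substituting into $\Aut(\VD_{5,5}(q)) = V \rtimes G_0$ with $V = F^{16}$ completes the proof.

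**Main obstacle.** I expect the hardest and most delicate point to be the careful bookkeeping of the diagonal part and the exclusion of the graph automorphism. One must argue rigorously that the $D_5$ graph automorphism interchanges the two half-spin modules and therefore cannot be realized as a linear transformation of the single $16$-dimensional module underlying $\VD_{5,5}(q)$, while simultaneously confirming that \emph{all} diagonal automorphisms (whose number is $\gcd(4,q^5-1) = |Z(D_5(q))|$) genuinely act on this module and normalize $L$. This requires knowing the precise stabilizer of the spin module's isomorphism class in $\operatorname{Out}(L)$; I would cite the structure and properties of $\operatorname{\Gamma O}$ and $\operatorname{Inndiag}$ groups from the references \cite{kleidman} and \cite{atlas} already invoked in the excerpt rather than recomputing these facts by hand.
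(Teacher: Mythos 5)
Your proposal is correct in outline, but it takes a genuinely different route from the paper, and two of its steps need repair before it is a proof.

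\emph{Comparison.} You work abstractly with the representation-theoretic criterion: an automorphism of \( L = D_5(q) \) is realized in \( N_{\GL_d(p)}(L) \) if and only if it preserves the equivalence class of the spin module (up to twist), and you then check which of the inner, diagonal, field and graph automorphisms pass this test. The paper instead realizes everything concretely inside \( K = E_6(q) \) of universal type, following Liebeck: the vector group is the unipotent radical \( U \) of a parabolic \( P = UL \) with Levi factor \( L = MH \), \( M \simeq D_5(q) \) acting on \( U \) as the spin module; the Cartan subgroup \( H \) induces the \emph{full} group of diagonal automorphisms of \( M \) (citing \cite[Theorem~2.6.5~(f)]{cfsg3} and \cite[Section~1, B]{minperm}), and a field automorphism \( \phi \) of \( K \) normalizes \( U \), \( M \), \( H \) and induces all field automorphisms of \( M \). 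Graph automorphisms are then excluded not by comparing modules but by a conjugacy-class argument: a vector stabilizer in a suitable orbit is an \( A_4 \)-parabolic of \( \overline{M} \), a graph automorphism preserving that orbit would send it to a conjugate parabolic, yet by \cite[Theorem~2.6.5~(c)]{cfsg3} the graph automorphism interchanges the two classes of \( A_4 \)-parabolics. Your route avoids the \( E_6 \) machinery entirely and is shorter, at the price of leaning on nontrivial facts about modules for finite groups of Lie type; the paper's route is longer but every automorphism is exhibited by an explicit element, and the only input beyond Liebeck's Lemma~2.9 is standard Chevalley-group bookkeeping.

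\emph{Gaps to repair.} First, your mechanism for realizing all diagonal automorphisms is wrong as stated: the conformal orthogonal group acts on the natural \( 10 \)-dimensional module, not on the \( 16 \)-dimensional spin module, and the improper similitudes in it correspond precisely to the graph automorphism you are trying to exclude. A correct replacement: a diagonal automorphism \( \alpha \) extends to an inner automorphism of the ambient algebraic group, so \( \rho \circ \alpha \cong \rho \) over \( \overline{\GF(q)} \); since both are absolutely irreducible \( \GF(q) \)-representations, equivalence over an extension field gives equivalence over \( \GF(q) \), hence an intertwiner in \( \GL_{16}(q) \). Second, your exclusion of the graph automorphism only rules out \( \GF(q) \)-\emph{linear} realizations. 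The normalizer is taken in \( \GL_d(p) \), whose relevant elements are merely \( \GF(q) \)-semilinear (this is exactly why field automorphisms occur at all), so you must show that the negative half-spin module is not isomorphic to \emph{any} Frobenius twist \( (\Delta^+)^{(p^i)} \) of the positive one; this follows from Steinberg's theorem (the twists have \( q \)-restricted highest weights \( p^i\omega_5 \neq \omega_4 \)), but it does not follow from the bare statement that the graph automorphism swaps the two half-spin modules. Finally, your Schur's-lemma claim \( C_{\GL_d(p)}(L) = F^{\times} \) needs the endomorphism algebra over \( \GF(p) \) to be exactly \( \GF(q) \) and not a larger field; easiest is to note that this is already contained in Lemma~\ref{vd55norm}, since \( G_0/F^\times \) almost simple forces the kernel of \( G_0 \to \Aut(\operatorname{soc}) \) to be \( F^\times \). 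With these repairs your argument goes through.
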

\begin{proof}
	We follow~\cite[Lemma~2.9]{liebeckAffine}. Take \( K = E_6(q) \) to be of universal type,
	so that \( |Z(K)| = \gcd(3, q-1) \). The Dynkin diagram of \( K \) is:
	\[
		\overset{\mathclap{\alpha_1}}{\circ} -
		\overset{\mathclap{\alpha_3}}{\circ} -
		\overset{\mathclap{\alpha_4}}{
				\underset{\underset{\textstyle\circ\mathrlap{\scriptstyle\,\alpha_2}}{\textstyle\vert}}
				{\circ}} -
		\overset{\mathclap{\alpha_5}}{\circ} -
		\overset{\mathclap{\alpha_6}}{\circ}
	\]
	Let \( \Sigma \) be the set of roots and let \( x_{\alpha}(t) \), \( h_{\alpha}(t) \) be Chevalley generators of \( K \).
	Write \( X_\alpha = \{ x_\alpha(t) | t \in F \} \). Let \( P \) be a parabolic subgroup of \( K \) corresponding to the set of roots
	\( \{ \alpha_2, \alpha_3, \alpha_4, \alpha_5, \alpha_6 \} \), and let \( P = UL \) be its Levi decomposition. Moreover,
	\( L = MH \) and we may choose \( P \) such that
	\[ U = \langle X_\alpha \mid \alpha \in \Sigma^+,\; \alpha \text{ involves } \alpha_1 \rangle, \]
	\[ M = \langle X_{\pm\alpha_i} \mid 2 \leq i \leq 6 \rangle, \]
	where \( M \) is of universal type and \( H = \langle h_{\alpha_i}(t) | t \in F, \; 1 \leq i \leq 6 \rangle \) is the Cartan subgroup.
	In \cite[Lemma~2.9]{liebeckAffine} it was shown that \( M \simeq D_5(q) \),
	the group \( U \) is elementary abelian of order \( q^{16} \) and in fact,
	it is a spin module for \( M \). By~\cite[Theorem~2.6.5~(f)]{cfsg3}, \( H \) induces diagonal automorphisms on \( M \),
	and by~\cite[Section~1, B]{minperm} it induces the full group of diagonal automorphisms.
	Recall that for an element \( h \) of \( H \) we have \( x_{\alpha}(t)^h = x_{\alpha}(k\cdot t) \) for some \( k \in F \).
	In particular, diagonal automorphisms of \( D_5(q) \) commute with the action of the field \( F \) on \( U \).

	Let \( \phi \) be a generator of the field automorphisms group of \( K \),
	and note that one can identify that group with \( \Aut(F) \);
	in particular, \( \phi \) acts on \( F \) under such an identification.
	By \cite[Theorem~2.5.1~(c)]{cfsg3}, generators \( x_{\alpha}(t) \) and \( h_{\alpha}(t) \) are carried to
	\( x_{\alpha}(t^\phi) \) and \( h_{\alpha}(t^\phi) \) by \( \phi \), so field automorphisms normalize \( U \), \( M \) and~\( H \).
	Furthermore, \( \phi \) induces the full group of field automorphisms on \( M \).

	Set \( T = L \rtimes \langle \phi \rangle \). We have \( M \unlhd T \) and \( T \) induces
	all field and diagonal automorphisms on \( M \).
	Set \( \overline{T} = T/Z(K) \) and \( \overline{M} = MZ(K)/Z(K) \).
	By \cite[Theorem~2.6.5~(e)]{cfsg3}, the centralizer \( C_{\Aut(K)}(U) \) is the image of \( Z(U) \) in \( \Aut(K) \).
	Therefore \( \overline{T} \) acts faithfully on \( U \), and since \( |Z(M)| \) is coprime to \( |Z(K)| \),
	we derive that \( \overline{M} \simeq M \simeq D_5(q) \).
	Hence we have an embedding \( \overline{T} \leq \GL_d(p) \), where \( |U| = p^d \),
	and, with some abuse of notation, \( \overline{T} \leq N_{\GL_d(p)}(D_5(q)) \). By Lemma~\ref{vd55norm}, the latter
	normalizer is an almost simple group (modulo scalars), and thus we have shown that it contains all field and diagonal automorphisms
	of \( D_5(q) \). It is left to show that it does not contain graph automorphisms.

	Suppose that a graph automorphism \( \psi \) lies in \( G_0 = N_{\GL_d(p)}(D_5(q)) \), and recall that
	\( \overline{M} \simeq D_5(q) \). By \cite[Lemma~2.9]{liebeckAffine}, there is an orbit \( \Delta \) of \( G_0 \)
	on nonzero vectors of \( U \), such that the point stabilizer \( \overline{M}_\delta \), \( \delta \in \Delta \)
	is a parabolic subgroup of type \( A_4 \).
	Since \( \psi \) preserves the orbit \( \Delta \) and normalizes \( \overline{M} \),
	it must take a point stabilizer \( \overline{M}_\delta \)
	to the point stabilizer \( \overline{M}_{\delta'} \) for some \( \delta' \in \Delta \), in particular,
	it takes \( \overline{M}_\delta \) to a conjugate subgroup. 
	That is impossible, since by~\cite[Theorem~2.6.5~(c)]{cfsg3}, automorphism \( \psi \) interchanges conjugacy classes of
	parabolic subgroups of type \( A_4 \), so the final claim is proved.
\end{proof}

Recall the construction of the graph \( \VSz(q) \), \( q = 2^{2e+1} \), \( e \geq 1 \).
Set \( F = \GF(q) \), \( V = F^4 \) and let \( \sigma \) be an automorphism of \( F \)
acting as \( \sigma(x) = x^{2^{e+1}} \). Define the subset \( O \) of the projective space
\( P_1(V) \) by
\[ O = \{ (0, 0, 1, 0) \} \cap \{ (x, y, z, 1) \mid z = xy + x^2x^\sigma + y^\sigma\}, \]
where vectors are written projectively. The vertex set of \( \VSz(q) \) is \( V \) and two
vectors are connected by an edge, if a line connecting them has a direction in~\( O \).

Recall that \( \Sz(q) \leq \GL_4(q) \) is faithfully represented on \( P_1(V) \)
and induces the group of all collineations which preserve the Suzuki-Tits ovoid \( O \)	(see~\cite[Chapter XI, Theorem~3.3]{huppert3}).
Clearly scalar transformations preserve the preimage of \( O \) in \( V \), and it can be easily
seen that \( O^\alpha = O \) for any \( \alpha \in \Aut(F) \). Hence the following group
\[ H = V \rtimes ((F^{\times} \times \Sz(q)) \rtimes \Aut(F)) \]
acts as a group of automorphisms of \( \VSz(q) \). By~\cite[Lemma~16.4.6]{hirschfeld}, \( \Sz(q) \)
acts transitively on \( P_1(V) \setminus O \), hence \( H \) is a rank~3 group.

We will show that \( H \) is the full automorphism group of \( \VSz(q) \), but first we need
to note the following basic fact.

\begin{lemma}\label{szvo}
	If \( q = 2^{2e+1} \), \( e \geq 1 \), then there is no subgroup of \( \Aut(\VO_4^-(q)) \)
	isomorphic to \( \Sz(q) \). In particular, graphs \( \VO_4^-(q) \) and \( \VSz(q) \) are not isomorphic.
\end{lemma}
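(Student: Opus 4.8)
The plan is to suppose, for contradiction, that $\Aut(\VO_4^-(q))$ contains a subgroup $S \cong \Sz(q)$, and then to track $S$ through the normal layers of this affine group until it is forced inside a copy of $\PSL_2(q^2)$, where the classical description of subgroups of $\PSL_2$ excludes it. By Proposition~\ref{affpolaut} (with $m=2$) I may write $\Aut(\VO_4^-(q)) = F^4 \rtimes \GamO_4^-(q)$. Since $q = 2^{2e+1}$ with $e \geq 1$, the group $S \cong \Sz(q)$ is nonabelian simple, so the normal subgroup $S \cap F^4$ of $S$ is trivial or all of $S$; the latter is impossible because $F^4$ is abelian. Hence $S$ embeds into the point stabilizer $\GamO_4^-(q)$.

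Next I would strip off the non-classical layers one at a time, exploiting simplicity of $S$ at each stage. The field-automorphism map $\GamO_4^-(q) \to \Aut(F) \cong C_{2e+1}$ has cyclic image and kernel the group of linear similarities, so $S$ maps trivially and lies in the similarity group. Because $q$ is even, squaring is an automorphism of $F^\times$, hence every similarity multiplier is realised by a scalar; consequently the similarity group splits as $F^\times \times \mathrm{O}_4^-(q)$ with $F^\times$ central of odd order $q-1$. Projection onto the second factor has central kernel $F^\times$, so $S \cap F^\times$ is central in $S$ and therefore trivial, giving $S \hookrightarrow \mathrm{O}_4^-(q)$. Finally the Dickson invariant $\mathrm{O}_4^-(q) \to C_2$ is trivial on $S$, whence $S \leq \Omega_4^-(q)$. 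Using the exceptional isomorphism $\Omega_4^-(q) \cong \PSL_2(q^2)$ (which holds as the relevant orthogonal group is centre-free in even characteristic) we obtain an embedding $\Sz(q) \hookrightarrow \PSL_2(q^2)$.

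This embedding is impossible: by Dickson's classification every nonabelian simple subgroup of $\PSL_2(q^2)$ is isomorphic to some $\PSL_2(2^m)$ with $m \geq 2$ (this includes $A_5 \cong \PSL_2(4)$, and note $\mathrm{PGL}_2 = \PSL_2$ in even characteristic), and each such group has order divisible by $3$; however $|\Sz(q)| = q^2(q^2+1)(q-1)$ is coprime to $3$ for $q = 2^{2e+1}$. This contradiction proves the first assertion. The step I expect to demand the most care is the precise determination of the similarity group as $F^\times \times \mathrm{O}_4^-(q)$ together with the identification of its only nonabelian composition factor $\Omega_4^-(q) \cong \PSL_2(q^2)$; once these structural facts are recorded, the successive simplicity arguments are routine. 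For the concluding ``in particular'' clause, the group $H$ exhibited just before the lemma shows that $\Sz(q) \leq \Aut(\VSz(q))$, so an isomorphism $\VO_4^-(q) \cong \VSz(q)$ would force $\Aut(\VO_4^-(q))$ to contain a copy of $\Sz(q)$, contradicting what was just established.
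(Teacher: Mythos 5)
Your proposal is correct, and its first half coincides with the paper's argument: the paper likewise observes that the translation subgroup and the quotient \( \GamO_4^-(q)/\Omega_4^-(q) \) are solvable, so simplicity of \( \Sz(q) \) forces an embedding \( \Sz(q) \hookrightarrow \Omega_4^-(q) \); your layer-by-layer analysis of the semisimilarity group (field-automorphism quotient, multiplier argument using that squaring is an automorphism of \( F^\times \) in even characteristic, Dickson invariant) is a more explicit rendering of that one-line solvability argument, and it is sound. Where you genuinely diverge is the final contradiction. The paper rules out \( \Sz(q) \leq \Omega_4^-(q) \) by inspecting the published list of maximal subgroups of \( \Omega_4^-(q) \), citing \cite[Table~8.17]{bray}. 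You instead invoke the exceptional isomorphism \( \Omega_4^-(q) \cong \PSL_2(q^2) \) (valid here precisely because the centre is trivial in even characteristic) and then apply Dickson's classification of subgroups of \( \PSL_2(2^n) \): every nonabelian simple subgroup is some \( \PSL_2(2^m) \), whose order \( 2^m(2^m-1)(2^m+1) \) is always divisible by \( 3 \), whereas \( |\Sz(q)| = q^2(q^2+1)(q-1) \) is coprime to \( 3 \) since \( q \equiv 2 \pmod 3 \). Both routes are valid; yours trades the modern maximal-subgroup tables for Dickson's classical theorem plus an elementary congruence, making the step more self-contained (and, note, the crude order comparison would not suffice, since \( |\Sz(q)| \) does divide \( |\PSL_2(q^2)| \)), while the paper's version is shorter given the reference. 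Your treatment of the ``in particular'' clause is identical to the paper's.
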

\begin{proof}
	Suppose the contrary, so that \( \Sz(q) \) is a subgroup of \( \Aut(\VO_4^-(q)) \).
	By Proposition~\ref{affpolaut}, we have \( \Aut(\VO_4^-(q)) \simeq V \rtimes \GamO_4^-(q) \)
	for some elementary abelian group \( V \). Recall that the orthogonal group \( \Omega_4^-(q) \)
	is a normal subgroup of~\( \GamO_4^-(q) \), and the quotient \( \GamO_4^-(q)/\Omega_4^-(q) \) is solvable.
	Clearly \( V \) is also solvable, and since \( \Sz(q) \) is simple, we obtain an embedding of
	\( \Sz(q) \) into \( \Omega_4^-(q) \). Yet that is impossible, as can be easily seen by inspection
	of maximal subgroups of \( \Omega_4^-(q) \), see, for instance,~\cite[Table~8.17]{bray}.
	That is a contradiction, so the first claim is proved.

	The second claim follows from the fact that \( \Sz(q) \) lies in \( \Aut(\VSz(q)) \).
\end{proof}

\begin{proposition}\label{szaut}
	Let \( q = 2^{2e+1} \), where \( e \geq 1 \), and set \( F = \GF(q) \). Then
	\[ \Aut(\VSz(q)) = F^4 \rtimes ((F^{\times} \times \Sz(q)) \rtimes \Aut(F)). \]
\end{proposition}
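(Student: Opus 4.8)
The plan is to prove that the full automorphism group of $\VSz(q)$ equals the rank~3 group $H = F^4 \rtimes ((F^\times \times \Sz(q)) \rtimes \Aut(F))$ already exhibited as a subgroup. Since $H$ acts as a rank~3 group on $\VSz(q)$, we have $H \leq \Aut(\VSz(q)) = \Aut(\VSz(q))^{(2)} = H^{(2)}$, so it suffices to show $H^{(2)} = H$, equivalently that $\Aut(\VSz(q))$ is no larger than $H$. First I would observe that $G := \Aut(\VSz(q)) = H^{(2)}$ is itself a rank~3 permutation group with the same socle structure as $H$: by Lemma~\ref{praeger2}, apart from finitely many exceptions $G$ is again a primitive affine rank~3 group, so $G = V \rtimes G_0$ with $V = F^4$ and $G_0 \leq \GL_4(q)$ (extended by field automorphisms). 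Thus the task reduces to identifying the point stabilizer $G_0$, which must contain $(F^\times \times \Sz(q)) \rtimes \Aut(F)$.

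\medskip

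Next I would invoke the classification of affine rank~3 groups (Theorem~\ref{affrankthree}) together with the subdegree analysis of Proposition~\ref{subdinter} to pin down which class $G$ lies in. The degree is $q^4 = p^d$ and the subdegrees of $\VSz(q)$ coincide with those of $\VO_4^-(q)$; by Proposition~\ref{subdinter} the only affine rank~3 graphs sharing these subdegrees are $\VSz(q)$ and $\VO_4^-(q)$ themselves. Therefore $G_0$ belongs either to class (A7) with $\epsilon = -$ (the affine polar case $\VO_4^-(q)$) or to class (A11) (the Suzuki case). The crucial separating step is Lemma~\ref{szvo}: since $\Sz(q) \leq G_0$ and there is no subgroup of $\Aut(\VO_4^-(q))$ isomorphic to $\Sz(q)$, the graph $\VSz(q)$ is \emph{not} isomorphic to $\VO_4^-(q)$, and hence $G$ cannot be of polar type. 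This forces $G_0$ into class (A11), so by the main result of~\cite{liebeckAffine} we have $G_0 \leq N_{\GL_d(p)}(\Sz(q))$, with $\Sz(q)$ acting on $V = F^4$ in its natural $4$-dimensional representation.

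\medskip

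It then remains to compute this normalizer and confirm it equals $(F^\times \times \Sz(q)) \rtimes \Aut(F)$. Here I would argue that $\Sz(q)$ is absolutely irreducible on $V$ (its natural module), so by Schur's lemma the centralizer of $\Sz(q)$ in $\GL_4(q)$ consists of scalars $F^\times$, giving the direct factor $F^\times \times \Sz(q)$ inside $G_0$; the intersection is trivial since $\Sz(q)$ is simple and nonabelian. The outer structure of $N_{\GL_d(p)}(\Sz(q))$ modulo $F^\times \times \Sz(q)$ is governed by $\Out(\Sz(q))$ together with the semilinear (field automorphism) contributions, and since $\Aut(\Sz(q)) = \Sz(q) \rtimes \Aut(F)$ with $\Aut(F) = \Aut(\GF(2^{2e+1}))$ cyclic of order $2e+1$, the normalizer induces exactly the field automorphisms $\Aut(F)$ on $\Sz(q)$—these are realised by the semilinear maps already present in $H$. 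Appealing to Hering's theorem~\cite{hering} as in Lemma~\ref{vd55norm} rules out the normalizer being transitive on nonzero vectors (which would contradict rank~$3$), confirming that no extra elements arise. Assembling these pieces gives $G_0 = N_{\GL_d(p)}(\Sz(q)) = (F^\times \times \Sz(q)) \rtimes \Aut(F)$ and hence $\Aut(\VSz(q)) = H$.

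\medskip

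The main obstacle I anticipate is the separation step rather than the normalizer computation: a priori $\Sz(q)$ and $\VO_4^-(q)$ have identical strong-regularity parameters and identical subdegrees, so the arithmetic of subdegrees alone cannot distinguish them. The entire argument hinges on the group-theoretic input of Lemma~\ref{szvo}—that $\Sz(q)$ does not embed in the solvable-by-$\Omega_4^-(q)$ structure of $\Aut(\VO_4^-(q))$—to break the tie and force class (A11). Once the class is fixed, the remaining work is the comparatively routine identification of the normalizer via irreducibility and the known structure of $\Aut(\Sz(q))$.
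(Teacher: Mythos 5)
Your reduction to class (A11) is exactly the paper's argument: identify \( G = \Aut(\VSz(q)) \) with \( H^{(2)} \), invoke Lemma~\ref{praeger2} to keep the affine socle, use Proposition~\ref{subdinter} to narrow the possibilities to the polar case \( \VO_4^-(q) \) or the Suzuki case, and break the tie with Lemma~\ref{szvo}. Where you depart from the paper is the endgame. The paper does not compute \( N_{\GL_d(p)}(\Sz(q)) \) from first principles; it quotes Liebeck's structural result \cite[(1.4)]{liebeckAffine}, which says that modulo scalars the generalized Fitting subgroup of \( G_0 \) is simple, so \( G_0/(G_0 \cap F^\times) \) is almost simple with socle \( \Sz(q) \); together with \( \operatorname{Out}(\Sz(q)) \) consisting of field automorphisms this gives \( |G_0| \le |F^\times|\,|\Aut(\Sz(q))| = |H_0| \), hence \( G_0 = H_0 \). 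Your appeal to Hering's theorem is superfluous in this scheme: once \( H_0 \le G_0 \le N_{\GL_d(p)}(\Sz(q)) \) and the normalizer's order is bounded by \( |H_0| \), equality follows; transitivity plays no role here (in Lemma~\ref{vd55norm} Hering serves a different purpose, namely proving the reverse inclusion \( N \le G_0 \), which your counting argument makes unnecessary).

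The genuine gap is in your Schur's lemma step. Absolute irreducibility of the natural module over \( F = \GF(q) \) gives \( C_{\GL_4(q)}(\Sz(q)) = F^\times \), but what your argument needs is \( C_{\GL_d(p)}(\Sz(q)) = F^\times \), the centralizer over the \emph{prime} field, since a priori \( G_0 \) is only a subgroup of \( \GL_d(p) = \GL_{4(2e+1)}(2) \). These are not the same assertion, and absolute irreducibility over \( F \) alone does not imply the second one: if an absolutely irreducible \( F \)-module can be realized over a proper subfield, its centralizer over the prime field is strictly larger than \( F^\times \). For a concrete counterexample, let \( C_3 \) act on \( V = \GF(16) \) by multiplication by a cube root of unity \( \omega \in \GF(4) \subset \GF(16) \): this module is absolutely irreducible over \( \GF(16) \) (it is one-dimensional), yet \( C_{\GL_4(2)}(C_3) \cong \GL_2(4) \) has order \( 180 \), not \( |\GF(16)^\times| = 15 \). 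So your argument requires the additional fact that the natural \( 4 \)-dimensional \( \Sz(q) \)-module is not realizable over any proper subfield of \( F \) --- equivalently, that it remains irreducible as a \( \GF(2)[\Sz(q)] \)-module with endomorphism field exactly \( F \). This is true (for instance, by Steinberg's tensor product theorem the faithful \( 4 \)-dimensional modules of \( \Sz(q) \) in characteristic \( 2 \) are the \( 2e+1 \) pairwise non-isomorphic Frobenius twists of the natural module, and realizability over \( \GF(q_0) \) would force one of them to be isomorphic to a nontrivial twist of itself), but it must be stated and proved; it is precisely the content that the citation of \cite[(1.4)]{liebeckAffine} supplies for free in the paper's proof, and as written your proposal silently assumes it.
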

\begin{proof}
	Let \( H = F^4 \rtimes ((F^{\times} \times \Sz(q)) \rtimes \Aut(F)) \)
	be a rank~3 group acting on \( \VSz(q) \) by automorphisms. Set \( G = \Aut(\VSz(q)) \)
	and recall that \( G = H^{(2)} \). By Lemma~\ref{praeger2}, \( G \) is an affine group with the same socle as \( H \),
	and by Proposition~\ref{subdinter} and Table~\ref{graphtab}, it follows
	that \( G \) lies in class (A7) or (A11). By Lemma~\ref{szvo}, the first possibility does not happen,
	so \( G \) is a group from class (A11). Denote by \( H_0 \) and \( G_0 \) zero stabilizers in \( H \) and \( G \)
	respectively. Notice that \( H_0 \leq G_0 \).

	By Theorem~\ref{affrankthree} and Table~\ref{subtab}, we have \( G_0 \leq \GamL_4(q) \)
	and \( \Sz(q) \unlhd G_0 \). By~\cite[(1.4)]{liebeckAffine}, given \( Z = Z(\GL_4(q)) \simeq F^\times \),
	the generalized Fitting subgroup of \( G_0/(G_0 \cap Z) \) is a simple group. Hence \( G_0/(G_0 \cap Z) \)
	is an almost simple group with socle~\( \Sz(q) \).

	The outer automorphisms group of \( \Sz(q) \) consists of field automorphisms only (see~\cite[Table~5]{atlas}), so
	\[ |G_0| \leq |Z|\cdot|\Aut(\Sz(q))| \leq |F^\times||\Sz(q)||\Aut(F)|. \]
	Since \( H_0 \simeq (F^{\times} \times \Sz(q)) \rtimes \Aut(F) \), the order of \( H_0 \) coincides
	with the value on right hand side of the inequality. Now \( H_0 = G_0 \) and the claim is proved.
\end{proof}

\emph{Proof of Theorem~\ref{class}.} Let \( G \) be a rank~3 group of sufficiently large degree.
By Proposition~\ref{nonabsoc}, we may assume that \( G \) is a primitive affine group which does not
stabilize a product decomposition and, moreover, \( G^{(2)} \) is also an affine group.
By Theorem~\ref{affrankthree}, \( G \) is either a one-dimensional affine group (class (A1)), or
preserves a bilinear forms graph \( H_q(2, m) \), \( m \geq 2 \), an affine polar graph
\( \VO_{2m}^{\epsilon}(q) \), \( \epsilon = \pm \), \( m \geq 2 \), alternating forms graph \( A(5, q) \),
affine half-spin graph \( \VD_{5,5}(q) \) or Suzuki-Tits ovoid graph \( \VSz(q) \).

The full automorphism groups of these graphs (i.e.\ 2-closures of respective groups) are described in
Theorem~\ref{a1inv} (one-dimensional affine groups), Proposition~\ref{bilaut} (bilinear forms graph),
Proposition~\ref{affpolaut} (affine-polar graph), Proposition~\ref{altaut} (alternating forms graph),
Proposition~\ref{vd55aut} (affine half-spin graph) and Proposition~\ref{szaut} (Suzuki-Tits ovoid graph).

Since by Proposition~\ref{subdinter}, the graph \( H_q(2, 2) \) is isomorphic to \( \VO_4^+(q) \),
we may exclude it from the bilinear forms case. Now it is easy to see that cases
considered in Theorem~\ref{class}~(iv) are mutually exclusive. Indeed, it suffices to prove
that graphs from different cases are not isomorphic. By Proposition~\ref{subdinter},
if two affine rank~3 graphs have the same subdegrees, then they belong to the same case except for
\( \VSz(q) \) and \( \VO_4^-(q) \), \( q = 2^{2e+1} \), \( e \geq 1 \)
(note that we group one-dimensional affine graphs into one case). By Lemma~\ref{szvo},
graphs \( \VSz(q) \) and \( \VO_4^-(q) \) are not isomorphic, which proves the claim.

Finally, inclusions of the form \( G \leq \AGL_a(q) \) can be read off Table~\ref{subtab}.
Notice that in some cases we do not give the minimal value of \( a \), for example, if
\( \SU_m(q) \leq G \) lies in class (A6), then \( G \leq \AGL_m(q^2) \), but we list
the inclusion \( G \leq \AGL_{2m}(q) \). This completes the proof of the theorem. \qed
\bigskip

\newpage
\section{Appendix}\label{appendix}

In this section we collect some relevant tabular data. Table~\ref{subtab} lists information
on affine rank~3 groups from class (A), namely, for each group \( G \) it provides rough group-theoretical
structure (column ``Type of \( G \)''), degree \( n \) and subdegrees. Column ``\( a \)'' gives
the smallest integer \( a \) such that the stabilizer of the zero vector \( G_0 \) lies in \( \GamL_a(p^{d/a}) \).
Most of information in Table~\ref{subtab} is taken from \cite[Table~12]{liebeckAffine}, see also \cite[Table~10]{buekenhout}
for the values of \( a \).

Table~\ref{a1subtab} lists the subdegrees of one-dimensional affine rank~3 groups.
The first column specifies the type of graph associated to the group in question, next two columns
provide degree and subdegrees, and the last column lists additional constraints on parameters involved.
By~\cite{muzychukOneDim}, these graphs turn out to be either Van Lint--Schrijver, Paley or Peisert graphs.
See~\cite[Section~2]{vanlint} for the parameters of the Van Lint--Schrijver graph; parameters
of Paley and Peisert graphs are computed using the fact that these graphs are self-complementary.

Table~\ref{graphtab} lists rank~3 graphs corresponding to rank~3 groups from classes (A1)--(A11),
cf.~\cite[Table~11.4]{srgw}. Terminology and graph notation is mostly consistent with \cite{srgw}, see also \cite[Table~10]{buekenhout}.

Table~\ref{casestab} records information on when some families of affine rank~3 graphs can have identical subdegrees,
the procedure for building this table being described in Proposition~\ref{subdinter}.
Trivial cases (when two graphs are the same) are not listed, also graphs from cases (A1) and (A2) are omitted,
since they are dealt with separately.

Tables~\ref{casebtab} and~\ref{casectab} list degrees and subdegrees of affine rank~3 groups from classes (B) and (C),
without repetitions (i.e.\ parameter sets are listed only once, regardless of whether several groups possess same parameters).
If the smaller subdegree divides the largest, the last column gives the respective quotient; otherwise a dash is placed.
Information in Table~\ref{casebtab} is taken from \cite[Theorem~1.1]{foulserLowRank}
and \cite[Table~13]{liebeckAffine}, see also \cite[Table~11]{buekenhout}. Information in Table~\ref{casectab} before the
horizontal line is taken from \cite[Theorem~5.3]{foulserRank3}, but notice that we exclude the case of \( 119^2 \),
since \( 119 \) is not a prime number (that error was observed by Liebeck in~\cite{liebeckAffine}). Information in Table~\ref{casectab}
after the horizontal line is taken from \cite[Table~14]{liebeckAffine}, with the correction for the case of
\( \operatorname{Alt}(9) \), where subdegrees should be \( 120 \), \( 135 \) instead of \( 105 \), \( 150 \),
as noted in~\cite[Table~12]{buekenhout}.

Table~\ref{a1exctab} lists parameters of possible exceptions to Theorem~\ref{a1inv}.
The table consists of three subtables, corresponding to classes (A), (B) and (C) of Theorem~\ref{affrankthree},
i.e.\ values for the first subtable are taken from Lemma~\ref{a1subdgr},
for the second from Lemma~\ref{a1inb}, and for the third from Lemma~\ref{a1inc}.
Each subtable lists degrees and smallest subdegrees of possible exceptions.
Notice that parameters of one-dimensional affine rank~3 groups stabilizing a nontrivial product decomposition
are collected in the subtable for the class (A).

\begin{table}[h]
\caption{Class (A) in the classification of affine rank 3 groups}\label{subtab}
\begin{tabular}{l l l l c}
	\hline
	       & Type of \( G \)               & \( n = p^d \) & \( a \) & Subdegrees \\
	\hline
	(A1):  & \( G_0 < \Gamma L_1(p^d) \)   & \( p^d \)     & \( 1 \) & See Table~\ref{a1subtab}\\
	(A2):  & \( G_0 \) imprimitive         & \( p^{2m} \)  & \( 2m \) & \( 2(p^m - 1) \), \( (p^m - 1)^2 \)\\
	(A3):  & tensor product                & \( q^{2m} \)  & \( 2m \) & \( (q+1)(q^m - 1) \), \( q(q^m - 1)(q^{m-1} - 1) \)\\
	(A4):  & \( G_0 \unrhd \SL_m(\sqrt{q}) \) & \( q^m \)   & \( m \) &
	       \( (\sqrt{q}+1)(\sqrt{q}^m - 1) \), \( \sqrt{q}(\sqrt{q}^m - 1)(\sqrt{q}^{m-1} - 1) \)\\
	(A5):  & \( G_0 \unrhd \SL_2(\sqrt[3]{q}) \) & \( q^2 \) & \( 2 \) &
	       \( (\sqrt[3]{q}+1)(q - 1) \), \( \sqrt[3]{q}(q - 1)(\sqrt[3]{q}^2 - 1) \)\\
	(A6):  & \( G_0 \unrhd \SU_m(q) \)      & \( q^{2m} \) & \( m \) & \(
	       \begin{cases}
		       (q^m - 1)(q^{m-1} + 1), q^{m-1}(q-1)(q^m - 1), \, m \text{ even}\\
		       (q^m + 1)(q^{m-1} - 1), q^{m-1}(q-1)(q^m + 1), \, m \text{ odd} 
	       \end{cases} \)\\
	(A7):  & \( G_0 \unrhd \Omega^{\epsilon}_{2m}(q) \) & \( q^{2m} \) & \( 2m \) & \(
	       \begin{cases}
		       (q^m - 1)(q^{m-1} + 1), q^{m-1}(q-1)(q^m - 1), \, \epsilon = + \\
		       (q^m + 1)(q^{m-1} - 1), q^{m-1}(q-1)(q^m + 1), \, \epsilon = - 
	       \end{cases} \)\\
	(A8):  & \( G_0 \unrhd \SL_5(q) \)      & \( q^{10} \) & \( 10 \) & \( (q^5 - 1)(q^2 + 1) \), \( q^2(q^5 - 1)(q^3 - 1) \)\\
	(A9):  & \( G_0 \unrhd B_3(q) \)       & \( q^8 \)    & \( 8 \) & \( (q^4 - 1)(q^3 + 1) \), \( q^3(q^4 - 1)(q - 1) \)\\
	(A10): & \( G_0 \unrhd D_5(q) \)       & \( q^{16} \) & \( 16 \) & \( (q^8 - 1)(q^3 + 1) \), \( q^3(q^8 - 1)(q^5 - 1) \)\\
	(A11): & \( G_0 \unrhd \operatorname{Sz}(q) \)        & \( q^4 \)    & \( 4 \) & \( (q^2 + 1)(q - 1) \), \( q(q^2 + 1)(q - 1) \)\\
	\hline
\end{tabular}

\caption{Subdegrees of one-dimensional affine rank~3 groups}\label{a1subtab}
\begin{tabular}{c c c c}
	\hline
	Graph & Degree & Subdegrees & Comments\\
	\hline
	Van Lint--Schrijver & \( q = p^{(e-1)t} \) & \( \frac{1}{e}(q-1) \), \( \frac{1}{e}(e-1)(q-1) \) &
	\( e > 2\) is prime, \( p \) is primitive\(\pmod e \)\\
	Paley & \( q \) & \( \frac{1}{2}(q-1) \), \( \frac{1}{2}(q-1) \) & \( q \equiv 1 \pmod 4 \)\\
	Peisert & \( q = p^{2t} \) & \( \frac{1}{2}(q-1) \), \( \frac{1}{2}(q-1) \) & \( p \equiv 3 \pmod 4 \)\\
	\hline
\end{tabular}
\end{table}

\begin{table}[h]
\caption{Rank 3 graphs in class (A)}\label{graphtab}
\begin{tabular}{l l l p{4cm}}
	\hline
	       & Type of \( G \)               & Graph & Comments \\
	\hline
	(A1):  & \( G_0 < \GamL_1(p^d) \)      & Van Lint--Schrijver, Paley or Peisert graph & \\
	(A2):  & \( G_0 \) imprimitive         & Hamming graph & \\
	(A3):  & tensor product                & bilinear forms graph \( H_q(2, m) \) & \\
	(A4):  & \( G_0 \unrhd \SL_m(\sqrt{q}) \) & bilinear forms graph \( H_{\sqrt{q}}(2, m) \) &
		  \( \SL_m(\sqrt{q}) \) stabilizes an \( m \)-dimensional subspace over \( \GF(\sqrt{q}) \) \\
	(A5):  & \( G_0 \unrhd \SL_2(\sqrt[3]{q}) \) & bilinear forms graph \( H_{\sqrt[3]{q}}(2, 2) \) &
		  \( \SL_2(\sqrt[3]{q}) \) stabilizes a \( 2 \)-dimensional subspace over \( \GF(\sqrt[3]{q}) \)\\
	(A6):  & \( G_0 \unrhd \SU_m(q) \)      & affine polar graph \( \VO^{\epsilon}_{2m}(q) \), \( \epsilon = (-1)^m \) & \\
	(A7):  & \( G_0 \unrhd \Omega^{\epsilon}_{2m}(q) \) & affine polar graph \( \VO^{\epsilon}_{2m}(q) \) & \\
	(A8):  & \( G_0 \unrhd \SL_5(q) \)     & alternating forms graph \( A(5, q) \) & \\
	(A9):  & \( G_0 \unrhd B_3(q) \)       & affine polar graph \( \VO^+_8(q) \) & \\
	(A10): & \( G_0 \unrhd D_5(q) \)       & affine half spin graph \( \VD_{5,5}(q) \) & \\
	(A11): & \( G_0 \unrhd \operatorname{Sz}(q) \) & Suzuki-Tits ovoid graph \( \operatorname{VSz}(q) \) & \\
	\hline
\end{tabular}
\end{table}

\begin{table}[h]
\caption{Intersections between classes based on subdegrees}\label{casestab}
\begin{tabular}{| c | c | c | c | c |}
	\hline
	& \( \VO^{\pm}_{2\overline{m}}(\overline{q}) \) & \( A(5, \overline{q}) \)
	& \( \VD_{5,5}(\overline{q}) \) & \( \operatorname{VSz}(\overline{q}) \) \\
	\hline
	\( H_q(2, m) \) &
	 \begin{tabular}{@{}c@{}}
		 \( q^{2m} = \overline{q}^{2\overline{m}} \)\\
		 \( q = \overline{q}^{\overline{m}-1} \)\\
		 \( m = \frac{\overline{m}}{\overline{m}-1} \)\\
		 \( m = \overline{m} = 2 \), \( q = \overline{q} \)
	 \end{tabular} &
	 \begin{tabular}{@{}c@{}}
		 \( q^{2m} = \overline{q}^{10} \)\\
		 \( q = \overline{q}^2 \)\\
		 \( m = \frac{10}{4} \)\\
		 Impossible
	 \end{tabular} &
	 \begin{tabular}{@{}c@{}}
		 \( q^{2m} = \overline{q}^{16} \)\\
		 \( q = \overline{q}^3 \)\\
		 \( m = \frac{8}{3} \)\\
		 Impossible
	 \end{tabular} &
	 \begin{tabular}{@{}c@{}}
		 \( q^{2m} = \overline{q}^4 \)\\
		 \( q = \overline{q} \)\\
		 \( m = 2 \)\\
		 \( q(q^2-1)(q-1) = q(q^2+1)(q-1) \)\\
		 Impossible
	 \end{tabular}\\
	\hline
	\( \VO^{\pm}_{2m}(q) \) & &
	 \begin{tabular}{@{}c@{}}
		 \( q^{2m} = \overline{q}^{10} \)\\
		 \( q^{m-1} = \overline{q}^2 \)\\
		 \( m = \frac{5}{3} \)\\
		 Impossible
	 \end{tabular} &
	 \begin{tabular}{@{}c@{}}
		 \( q^{2m} = \overline{q}^{16} \)\\
		 \( q^{m-1} = \overline{q}^3 \)\\
		 \( m = \frac{8}{5} \)\\
		 Impossible
	 \end{tabular} &
	 \begin{tabular}{@{}c@{}}
		 \( q^{2m} = \overline{q}^4 \)\\
		 \( q^{m-1} = \overline{q} \)\\
		 \( m = 2 \), \( q = \overline{q} \)
	 \end{tabular}\\
	 \hline
	 \( A(5, q) \) & & &
	 \begin{tabular}{@{}c@{}}
		 \( q^{10} = \overline{q}^{16} \)\\
		 \( q^2 = \overline{q}^3 \)\\
		 Impossible
	 \end{tabular} &
	 \begin{tabular}{@{}c@{}}
		 \( q^{10} = \overline{q}^4 \)\\
		 \( q^2 = \overline{q} \)\\
		 Impossible
	 \end{tabular}\\
	 \hline
	 \( \VD_{5,5}(q) \) & & & &
	 \begin{tabular}{@{}c@{}}
		 \( q^{16} = \overline{q}^4 \)\\
		 \( q^3 = \overline{q} \)\\
		 Impossible
	 \end{tabular}\\
	\hline
\end{tabular}
\end{table}

% With no repetitions
\begin{table}[h]
\caption{Subdegrees of rank 3 groups in class (B)}\label{casebtab}
\begin{tabular}{l l l}
	\hline
	\( n = p^d \) & Subdegrees \( m_1 \), \( m_2 \) & \( \frac{m_2}{m_1} \) if it is an integer\\
	\( 2^6 \) & \( 27, 36 \) & ---\\
	\( 3^4 \) & \( 32, 48 \) & ---\\
	\( 7^2 \) & \( 24, 24 \) & \( 1 \)\\
	\( 13^2 \) & \( 72, 96 \) & ---\\
	\( 17^2 \) & \( 96, 192 \) & \( 2 \)\\
	\( 19^2 \) & \( 144, 216 \) & ---\\
	\( 23^2 \) & \( 264, 264 \) & \( 1 \)\\
	\( 3^6 \) & \( 104, 624 \) & \( 6 \)\\
	\( 29^2 \) & \( 168, 672 \) & \( 4 \)\\
	\( 31^2 \) & \( 240, 720 \) & \( 3 \)\\
	\( 47^2 \) & \( 1104, 1104 \) & \( 1 \)\\
	\( 3^4 \) & \( 32, 48 \) & ---\\
	\( 3^4 \) & \( 16, 64 \) & \( 4 \)\\
	\( 5^4 \) & \( 240, 384 \) & ---\\
	\( 7^4 \) & \( 480, 1920 \) & \( 4 \)\\
	\( 3^8 \) & \( 1440, 5120 \) & ---\\
	\hline
\end{tabular}
\end{table}

% With no repetitions
\begin{table}[h]
\caption{Subdegrees of rank 3 groups in class (C)}\label{casectab}
\begin{tabular}{l l l}
	\hline
	\( n = p^d \) & Subdegrees \( m_1 \), \( m_2 \) & \( \frac{m_2}{m_1} \) if it is an integer\\
	\( 3^4 \) & \( 40, 40 \) & \( 1 \)\\
	\( 2^8 \) & \( (2^4-1)\cdot 5, (2^4-1)\cdot 12 \) & ---\\
	\( 5^4 \) & \( (5^2-1)\cdot 6, (5^2-1)\cdot 20 \) & ---\\
	\( 31^2 \) & \( (31-1)\cdot 12, (31-1)\cdot 20 \) & ---\\
	\( 41^2 \) & \( (41-1)\cdot 12, (41-1)\cdot 30 \) & ---\\
	\( 7^4 \) & \( (7^2-1)\cdot 20, (7^2-1)\cdot 30 \) & ---\\
	\( 2^{12} \) & \( (2^6-1)\cdot 5, (2^6-1)\cdot 60 \) & \( 12 \)\\
	\( 71^2 \) & \( (71-1)\cdot 12, (71-1)\cdot 60 \) & \( 5 \)\\
	\( 79^2 \) & \( (79-1)\cdot 20, (79-1)\cdot 60 \) & \( 3 \)\\
	\( 89^2 \) & \( (89-1)\cdot 30, (89-1)\cdot 60 \) & \( 2 \)\\
	\( 5^6 \) & \( (5^3-1)\cdot 6, (5^3-1)\cdot 120 \) & \( 20 \)\\
	\hline\\
	\( 2^6 \) & \( 18, 45 \) & ---\\
	\( 5^4 \) & \( 144, 480 \) & ---\\
	\( 2^8 \) & \( 45, 210 \) & ---\\
	\( 7^4 \) & \( 720, 1680 \) & ---\\
	\( 2^8 \) & \( 120, 135 \) & ---\\
	\( 2^8 \) & \( 102, 153 \) & ---\\
	\( 3^6 \) & \( 224, 504 \) & ---\\
	\( 7^4 \) & \( 240, 2160 \) & \( 9 \)\\
	\( 3^5 \) & \( 22, 220 \) & \( 10 \)\\
	\( 3^5 \) & \( 110, 132 \) & ---\\
	\( 2^{11} \) & \( 276, 1771 \) & ---\\
	\( 2^{11} \) & \( 759, 1288 \) & ---\\
	\( 3^{12} \) & \( 65520, 465920 \) & ---\\
	\( 2^{12} \) & \( 1575, 2520 \) & ---\\
	\( 5^6 \) & \( 7560, 8064 \) & ---\\
	\hline
\end{tabular}
\end{table}

\begin{table}[h]
\caption{Possible exceptions to Theorem~\ref{a1inv}}\label{a1exctab}
	(A)
\begin{tabular}{c|c|c|c|c|c|c|c|c|c|c}
	\hline
	Degree & \(2^4\) & \( 2^6 \) & \( 2^8 \) & \( 2^{10} \) & \( 2^{12} \) & \( 2^{16} \) & \(3^2\) & \(3^4\) & \(3^6\) & \( 5^2 \)\\
	Subdegree & 5 & 21 & 51 & 93 & 315 & 3855 & 4 & 16 & 104 & 8\\
	\hline
\end{tabular}

	(B)
\begin{tabular}{c|c|c|c|c|c|c|c}
	\hline
	Degree & \( 3^4 \) & \( 3^6 \) & \( 7^2 \) & \( 7^4 \) & \( 17^2 \) & \( 23^2 \) & \( 47^2 \)\\
	Subdegree & 16 & 104 & 24 & 480 & 96 & 264 & 1104\\
	\hline
\end{tabular}

	(C)
\begin{tabular}{c|c|c|c}
	\hline
	Degree & \( 2^{12} \) & \( 3^4 \) & \( 89^2 \)\\
	Subdegree & 315 & 40 & 2640\\
	\hline
\end{tabular}
\end{table}

\clearpage

\bigskip

\noindent
{\sl Saveliy V. Skresanov\\
Sobolev Institute of Mathematics, 4 Acad. Koptyug avenue\\
and\\
Novosibirsk State University, 1 Pirogova street,\\
630090 Novosibirsk, Russia\\
e-mail: skresan@math.nsc.ru}

\end{document}